\newcommand{\ep}{\varepsilon}
\newcommand{\bg }{\bar{g}}
\newcommand{\Sp }{\mathbb{S}^{n-1}}
\renewcommand{\a }{\alpha }
\renewcommand{\b }{\beta }
\renewcommand{\d}{\delta }
\newcommand{\D }{\Delta }
\newcommand{\e }{\varepsilon }
\newcommand{\g }{\gamma}
\newcommand{\n }{\nabla }
\newcommand{\Sig}{\Sigma}
\renewcommand{\t }{\theta }
\newcommand{\boa}{\boldsymbol{a}}
\newcommand{\tboa}{\tilde{\boldsymbol a}}
\newcommand{\intbar}{\mathop{\int\makebox(-13.5,0){\rule[4pt]{.7em}{0.3pt}}%
\kern-6pt}\nolimits}
\newcommand{\be}{\begin{eqnarray}}
\newcommand{\ee}{\end{eqnarray}}
\newcommand{\hs}{\hspace{1cm}}
\newcommand{\disp}{\displaystyle} 
\newcommand{\Ru}{R_1}
\newcommand{\Rd}{R_2}
\newcommand{\ru}{r_1}
\newcommand{\rd}{r_2}
\newcommand{\etu}{\eta_1}
\newcommand{\etd}{\eta_2}
\newcommand{\eti}{\eta_i}
\newcommand{\gc}{g_{cyl}}
\newcommand{\vu}{v_{D_{\eta_{1}}}}
\newcommand{\R}{\mathbb{R}}
\newcommand{\bigo}[1]{\mathcal{O} \big( #1 \big)}
\newcommand{\nor}[2]{\|{#1}\|_{#2}}
\definecolor{coloras}{rgb}{0.,0.67,0}
\def\bea{\begin{eqnarray*}}
\def\eea{\end{eqnarray*}}
\def\f{\frac}
\author{Lorenzo MAZZIERI$^{a}$ and Antonio SEGATTI$^{b}$}
\date{}
\title{\bf Constant $\sigma_{k}$-curvature metrics with Delaunay type ends}
\begin{document}

\linespread{1.02}

\parindent=0pt

\newtheorem{lem}{Lemma}[section]
\newtheorem{pro}[lem]{Proposition}
\newtheorem{thm}{Theorem}
\newtheorem{rem}[lem]{Remark}
\newtheorem{cor}[lem]{Corollary}
\newtheorem{df}[lem]{Definition}
\newtheorem{claim}[lem]{Claim}
\newtheorem{conj}[lem]{Conjecture}
\newtheorem{ass}[lem]{Assumption}
\numberwithin{equation}{section}
\newtheorem{ackn}{Acknowledgments\!\!}\renewcommand{\theackn}{}

\maketitle

\begin{center}

\

{\small

\noindent $^a$ SISSA - International School for Advanced Studies

Via Beirut 2-4,

I-34014 Trieste - Italy
}

\

{\small

\noindent $^b$ Dipartimento di Matematica F. Casorati -
Universit\`a di Pavia\\
via Ferrata 1,\\
I-27100 Pavia - Italy

}

\end{center}

\footnotetext[1]{E-mail addresses: antonio.segatti@unipv.it, mazzieri@sissa.it}

\begin{center}
{\bf Abstract}

\end{center}

In this paper we produce families of complete non compact Riemannian metrics with positive constant $\sigma_k$-curvature equal to $2^{-k} {n \choose k}$ by performing the connected sum of a finite number of given $n$-dimensional Delaunay type solutions, provided $2 \leq 2k < n$. The problem is equivalent to solve a second order fully nonlinear elliptic equation.

\begin{center}

\noindent{\em Key Words: $\sigma_k$-curvature, fully nonlinear elliptic equations, conformal geometry, connected sum}

\bigskip

\centerline{\bf AMS subject classification:  53C24, 53C20,
53C21, 53C25}

\end{center}

\parindent=0pt

\centerline{}

\vspace{-1cm}


\section{Introduction and statement of the result}\label{s:intro}

In recent years much attention has been given to the study of the Yamabe problem for $\sigma_{k}$--curvature, briefly the $\sigma_{k}$--Yamabe problem. To introduce the analytical formulation, we first recall some background material from Riemmanian geometry. Given $(M,g)$, a compact Riemannian manifold of dimension $n\geq 3$, we denote respectively by $Ric_g$, $R_g$ the Ricci tensor and the scalar curvature of $(M,g)$. The Schouten tensor of $(M,g)$ is defined as follows  
\begin{eqnarray*}
A_g & := & \tfrac{1}{ n-2} \,\, \big( \, Ric_g \,\, - \,\, \tfrac{1}{2(n-1)} \, R_g g \, \big) \,\, . 
\end{eqnarray*}
If we denote by $\lambda_{1}, \ldots, \lambda_{n}$ the eigenvalues of the symmetric endomorphism $g^{-1}A_{g}$, then the $\sigma_k$-curvature of $(M,g)$ is defined as the $k$-th symmetric elementary function of $\lambda_{1},\ldots,\lambda_{n}$, namely
\begin{eqnarray*}
\sigma_k(g^{-1} A_{g}) \,\,\, := \, \sum_{i_1\, <\,\ldots\,< \, i_k}\lambda_{i_i}\cdot \, \ldots \, \cdot \lambda_{i_k} \,\, \quad \hbox{for $1\leq k \leq n$} \quad \quad  & \hbox{and} & \quad \quad 
\sigma_0 (g^{-1} A_{g}) \,\,\,:= \,\,\,1 .
\end{eqnarray*}
The $\sigma_{k}$--Yamabe problem on $(M,g)$
consists in finding metrics with constant $\sigma_{k}$--curvature in the same conformal class of $g$. The case $k=1$ is the well known Yamabe problem, whose progressive resolution is due to Yamabe \cite{Yamabe}, Trudinger \cite{trudinger}, Aubin \cite{aubin} and Schoen \cite{schoen}. In order to present the existence results for $k\geq 2$, when the equation becomes fully nonlinear, 
we recall the notion of $k$-admissibility, which is a sufficient condition to insure the ellipticity of the 
equation. A metric $g$ on $M$ is said to be $k$--admissible if it belongs to the $k$--th positive cone $\Gamma^{+}_{k}$, where
$$
g\in\Gamma^{+}_{k}\quad\Longleftrightarrow\quad \sigma_{j}(g^{-1}A_{g})>0\quad\hbox{for}\quad j=1,\ldots,k.
$$
Under the assumption that $g$ is $k$--admissible the (positive) $\sigma_{k}$--Yamabe problem on closed manifolds has been solved in the case $k=2$, $n=4$ by Chang, Gursky and Yang \cite{cgy1} \cite{cgy2}, for locally conformally flat manifolds by Li and Li \cite{ll} (see also Guan and Wang \cite{gw}), and for $2k>n$ by Gursky and Viaclovsky \cite{gv}. For $2 \leq 2k \leq n$ the problem has been solved by Sheng, Trudinger and Wang \cite{stw} under the extra--hypothesis that the operator is variational. We point out that for 
$k=1,2$ this hypothesis is always fulfilled, whereas for $k\geq 3$ it has been shown in \cite{bg} that this extra assumption is equivalent to the locally conformally flatness. 

\medskip

Of interest in this paper is the construction of complete non compact locally conformally flat metrics with constant (positive) $\sigma_k$-curvature. These solutions can be regarded as singular solutions for the $\sigma_k$-equation on the complement of a discrete set $\Lambda$ on the standard $n$-dimensional sphere. To put our result in perspective, we recall that for $k=1$, the first examples of conformal constant (positive) scalar curvature metrics with isolated singularities have been obtained by Schoen in \cite{schoen2}. Later, Mazzeo and Pacard (see \cite{mp0} and \cite{mp}) have produced different families of solutions on the complement of a singular set $\Lambda$ consisting of a finite disjoint union of closed smooth submanifolds of arbitrary dimension between $0$ and ${(n-2)}/{2}$. Another existence result, in the case where $\Lambda$ is given by an even number of points, is due to Mazzeo, Pollack and Uhlenbeck \cite{mpu}. We will return on this later, since our construction is closely related to their work. For $2\leq k<n/2$, the first examples of complete non compact metrics lying in the $k$-th positive cone and having constant $\sigma_k$-curvature have been obtained by the first author in a joint work with Ndiaye \cite{mn}, assuming that the points of the singular set have a symmetric disposition.

\medskip

Some comments about the asymptotic behavior of the singular solutions are now in order. For $k=1$ it follows from the works of Caffarelli, Gidas and Spruck \cite{cgs} and Korevaar, Mazzeo, Pacard and Schoen \cite{kmps} that every complete non compact locally conformally flat metric with constant positive scalar curvature must be asymptotic to a radial solution. In a recent work, Han, Li and Teixeira \cite{hlt} have shown that this fact is true also for metrics of constant $\sigma_k$ curvature lying in the $k$-th positive cone, provided $2 \leq k < n/2$. Notice that for $k\geq n/2$ the singularity is always removable. 
For these reason, it is clear that complete radial solutions are going to play a fundamental role in our construction. These particular solutions are also known as Delaunay-type metrics and have been classified by Chang, Han and Yang in \cite{chang-han-yang05} and we recall them briefly in Section \ref{Delaunay-Schwartz type metrics}. Essentially, they are conformally cylindrical metrics with a periodic conformal factor, whose minimum will be referred as Delaunay-parameter. 

\medskip

As anticipated, the construction presented in this paper is inspired by \cite{mpu} and consists in performing the connected sum of a finite number of Delaunay-type metrics. The solutions obtained in this way are quite different from the ones produced in \cite{mn}, which roughly speaking looks like a spherical central body with several Delaunay-type ends having small Delaunay parameters. 
In the present construction, the Delaunay parameters are not forced to be small, hence our solutions may possibly belong to a different connected component of the moduli space.  

\medskip

To fix the notations, we recall that the connected sum of two $n$--dimensional Riemannian manifolds $(D_1,g_1)$ and $(D_2,g_2)$ is the topological operation which consists in removing an open ball from 
both $D_1$ and $D_2$ and identifying the leftover boundaries, obtaining a new manifold with possibly different topology. Formally, if $p_i \in D_i$ and for a small enough $\e>0$ we excise the ball $B(p_i, \e)$ from $D_i$, $i=1,2$, the (pointwise) connected sum $M_\e$ of $D_1$ and $D_2$ along $p_1$ and $p_2$ with {\em necksize} $\e$ is the topological manifold defined as
$$
M_{\e}\,\, := \,\, D_{1}\sharp_{\e}D_{2} \,\, = \,\,\left[D_{1}\setminus B(p_{1},\e)\,\cup\, D_{2}\setminus B(p_{2},\e)\right]\big / \sim  \,\, ,
$$
where $\sim$ denotes the identification of the two boundaries $\partial B(p_i,\e)$, $i=1,2$. Of course the new manifold $M_\e$ can be endowed with both a differentiable structure and a metric structure, as it will be explicitly done in Section \ref{s:as}. Even though from a topological point of view the value of the {\em necksize} is forgettable, it will be important to keep track of it, when we will deal with the metric structure.

\medskip

Concerning the solvability of the Yamabe equation ($k=1$) on the pointwise connected sum of manifolds with constant scalar curvature, we recall the results of Joyce \cite{joyce} for the compact case and the already mentioned work of Mazzeo, Pollack and Uhlenbeck \cite{mpu} for the non compact case. For $2 \leq k < n/2$ and compact manifolds, a connected sum result has been provided by the first author in a joint work with Catino \cite{cat-maz}. Our main result here is the following
\begin{thm}
\label{main1}
Let $(D_{1},g_{1}),\ldots,(D_{N},g_{N})$ be a collection of $n$-dimensional Delaunay-type solutions (see Proposition \ref{collecting-Del}) 
to the positive $\sigma_k$-Yamabe problem, with $2\leq 2k <n$. Then, there exists a positive real number $\e_0 >0$ only depending on $n$, $k$, 
and the $C^2$--norm of the coefficients of 
$g_1$ and $g_2$ such that, for every $\e \in (0, \e_0]$, the connected sum $M_{\e}=D_{1}\sharp_{\e} \ldots \sharp_\e D_{N}$ can be endowed with a metric $\widetilde{g}_{\e}$ with constant $\sigma_{k}$--curvature equal to $2^{-k} {n \choose k}$. Moreover $\Vert \widetilde{g}_{\e}- g_{i}\Vert_{C^{r}(K_{i})}\rightarrow 0$ for 
any $r>0$ and 
any compact set $K_{i}\subset D_{i}\setminus\{p_{i}\}$,  the $p_i$'s, $i=1,\ldots,N$, being the points about which the connect sum is performed.
\end{thm}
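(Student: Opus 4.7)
The plan is a singular-perturbation gluing, in the spirit of Mazzeo--Pollack--Uhlenbeck \cite{mpu}, adapted to the fully nonlinear $\sigma_k$-equation. I would first build an approximate metric $\bg_\e$ on $M_\e$ by keeping each $g_i$ on the compact piece $D_i\setminus B(p_i,\e)$ and interpolating on the neck. Passing to conformally cylindrical coordinates centered at each $p_i$, where any Delaunay-type solution is a periodic function of the axial variable, I would cut off the two conformal factors with smooth functions supported on overlapping bands of width $O(1)$ and let $\bg_\e$ be the resulting convex combination. The freedom to translate along the cylinder and to mildly modify the two Delaunay parameters is reserved for later, to absorb the finite-dimensional cokernel of the linearization.

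Next I would estimate the error $F_\e := \sigma_k(\bg_\e^{-1}A_{\bg_\e}) - 2^{-k}\binom{n}{k}$. Away from the neck this vanishes by construction, so only the annulus of size $\e$ contributes; Taylor expanding around the two exact Delaunay solutions gives $\|F_\e\|_{C^{0,\alpha}_{\delta-2}}\le C\e^\gamma$ in a weighted Hölder space on $M_\e$, with weight $\delta$ chosen strictly between two consecutive indicial roots of the Delaunay linearization. I would then seek the true metric in the form $\wtilde g_\e = u^{4/(n-2)}\bg_\e$ with $u$ close to $1$; writing out the $\sigma_k$-equation and isolating the linear part at $u=1$ reduces the problem to
$$
\mathcal{L}_\e\,(u-1) \,=\, F_\e \,+\, \mathcal{Q}_\e(u-1),
$$
where $\mathcal{L}_\e$ is the linearization of the $\sigma_k$-operator at $\bg_\e$ (a second order elliptic operator, thanks to $k$-admissibility) and $\mathcal{Q}_\e$ collects the quadratic and higher order terms.

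The crux is producing a right inverse $\mathcal{G}_\e$ of $\mathcal{L}_\e$ between the weighted Hölder spaces $C^{0,\alpha}_{\delta-2}$ and $C^{2,\alpha}_\delta$ on $M_\e$, with operator norm bounded independently of $\e$. On each punctured factor $D_i\setminus\{p_i\}$ the Delaunay linearization has a Fredholm right inverse once $\delta$ avoids the indicial set; its cokernel is finite dimensional and corresponds geometrically to axial translations and Delaunay parameter shifts at each end. I would glue these local inverses via a partition of unity adapted to the neck to get a preliminary parametrix, and compensate the residual finite-dimensional obstruction by simultaneously varying the translation and parameter moduli introduced in the first step. This reduces matters to an invertible finite-dimensional linear problem at scale $\e$, delivering $\mathcal{G}_\e$ with a uniform bound. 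A standard contraction mapping argument on a ball of radius $\sim \e^{\gamma/2}$ in $C^{2,\alpha}_\delta$ then produces a unique $u_\e$ satisfying the equation, with $\|u_\e-1\|_{C^{2,\alpha}_\delta}=O(\e^\gamma)$; interior Schauder-type estimates for the fully nonlinear equation upgrade this to $C^r$-convergence of $\wtilde g_\e$ to $g_i$ on compact subsets of $D_i\setminus\{p_i\}$ for every $r$.

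The main obstacle is this last step: producing $\mathcal{G}_\e$ uniformly in $\e$. It is here that the fully nonlinear character of the problem really bites, since one must stay inside the $k$-admissible cone along the whole gluing so that $\mathcal{L}_\e$ remains uniformly elliptic and the classical Delaunay linear theory, developed originally for $k=1$, can be transported to the present setting. It is also here that the Jacobi fields of the Delaunay family force one to enlarge the ansatz with the finite-dimensional moduli of translations and parameter shifts, which is precisely what makes the necksize $\e$ and the individual ends truly relevant geometric parameters.
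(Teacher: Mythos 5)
Your strategy coincides with the paper's: an MPU--style gluing with approximate solutions that agree with the Delaunay metrics away from the neck, weighted H\"older spaces, a finite--dimensional deficiency space tied to the Jacobi fields (translations, Delaunay--parameter shifts, bendings), and a fixed--point argument. Two small slips first: the conformal exponent must be $4k/(n-2k)$, not $4/(n-2)$, and the error term $\mathcal{N}(u_\varepsilon,\bar g)$ is supported on the whole neck $(\log\varepsilon,-\log\varepsilon)\times\mathbb{S}^{n-1}$, not merely on an annulus of size $\varepsilon$.

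The genuine gap is in the step you yourself identify as the crux: producing the right inverse with a bound uniform in $\varepsilon$. Your functional setting uses a single weight $\delta$ pinched between consecutive indicial roots; this controls the ends but says nothing about how the estimate degenerates across the neck as $\varepsilon\to 0$. The paper needs a \emph{double} weight --- exponential decay $(\cosh t)^{-\delta}$, $1<\delta<\bar\delta(n,k)$, on the Delaunay ends, together with the degenerating weight $(\varepsilon\cosh t)^{\gamma}$, $0<\gamma<(n-2k)/k$, on the neck --- and the uniform a priori estimate is obtained by a blow--up/contradiction argument whose two limit regimes (a punctured Delaunay factor, handled via a removable--singularity lemma plus injectivity, and the model neck) are only nondegenerate in this mixed norm. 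A partition--of--unity parametrix glued from the two local inverses does not by itself yield $\varepsilon$-independence of the norm of $\mathcal{G}_\varepsilon$, nor does a single weight give the smallness $\varepsilon^{-\gamma}\varepsilon^{(\gamma+2)(n-2k)/n}$ of the quadratic remainder on the neck that is needed to close the iteration. Relatedly, your treatment of the Jacobi fields as a cokernel to be killed by varying moduli must be replaced (or at least supplemented) by what the paper actually does: the operator on the decaying space is injective but not surjective, so the \emph{domain} is enlarged by the span of the integrable Jacobi fields $\Psi^{j,+}$ at $+\infty$ and $\Psi^{j,-}$ at $-\infty$, and in the nonlinear step these low--mode components are realized as exact conformal motions $u_\varepsilon(\boldsymbol{a}^{1,\pm},\boldsymbol{a}^{2,\pm},\cdot)$ of the Delaunay ends. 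This last point is not cosmetic: the modes $\Psi^{0,\pm}$ are merely bounded or growing in $t$, so adding them naively to $u_\varepsilon$ would not be infinitesimal at infinity and could destroy completeness and positivity of the final metric; only by absorbing them into exact Delaunay deformations, and keeping the genuinely additive correction $\hat w$ exponentially decaying, does one obtain a complete positive solution and the asserted $C^r$--convergence on compact sets.
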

Some comments about the strategy of the proof are in order. Incidentally, we notice that the constant $2^{-k}{n \choose k}$ arises naturally as the $\sigma_k$--curvature of the $n$-dimensional standard sphere, so we will end up with a family of metrics $\{\widetilde{g}_{\e} \}_\e$ parametrized in terms of the {\em necksize} which satisfy 
\begin{eqnarray}
\label{sigmak=const}
\sigma_k \big( \, \widetilde{g}_\e^{-1}  A_{\widetilde{g}_\e} \big) & = &  2^{-k} \, \hbox{${n \choose k}$} .
\end{eqnarray}
To show the existence of these solutions, we start by writing down (see Section \ref{s:as}) an explicit family of approximate solution metrics $\{g_\e \}_\e$ (still parametrized by the {\em necksize}) on $M_\e$. This metrics are complete and non compact, since they coincide with the original Delaunay-type metrics $g_i$ on $D_i \setminus B(p_i, \e)$, $i=1,2$, and are close to a model metric on the remaining piece of the connected sum manifold, which in the following will be referred as neck region. The metric which we are going to use as a model in the neck region is described in Section \ref{Delaunay-Schwartz type metrics}. It is a complete metric on $\R \times \Sp$ with zero $\sigma_k$--curvature and yields a natural generalization of the scalar flat Schwarzschild metric. It has been successfully employed in \cite{cat-maz} to treat the connected sum of constant scalar curvature manifolds and for the local analysis on the neck region we will refer to this work.

\medskip

The next step in our strategy amounts to look for a suitable correction of the approximate solutions to the desired exact solutions. This will be done by means of a global conformal perturbation. At the end it will turn out that for sufficiently small values of the parameter $\e$ such a correction can actually be found together with a very precise control on its size and this will ensure the smooth convergence of the new solutions $\tilde{g}_\e$ to the former metrics $g_i$ on the compact subsets of $M_i \setminus \{p_i\}$, $i=1,2$. We point out that it is also important to control the asymptotic behavior of such a perturbation in order to preserve the completeness of the approximate solutions. Typically, one is led to search for corrections which present a decay at infinity.

\medskip

The main point in the correction procedure is to provide invertibility for the linearized operator about the approxiamte solutions, together with uniform (with respect to the {\em necksize} parameter $\e$) {\em a priori} bounds. This  will enable us to carry out the perturbative nonlinear analysis (Section \ref{s:nonlinear}) by proving the convergence of a Newton iteration scheme. The uniformity of the {\em a priori} bound will follow from the use of weighted function spaces with a weighting function acting on the 
neck region, in analogy with the analysis contained in \cite{cat-maz}. On the other hand, the invertibility issue is quite different from the compact case. In fact, in order to obtain the desired Fredholm properties for the linearized operator, we will further introduce weighting functions with gradient supported outside of a compact region of $M_\e$. In analogy with the case $k=1$ (see \cite{mpu}), the analysis is complicated by the lack of coercivity of the linearized operator. This is due to the conformal invariance of the $\sigma_k$-equation. In fact, 
the functions which are responsible for this lack of coercivity arise as infinitesimal generators (Jacobi fields) of conformal transformations. As it will be made clear in Section \ref{s:nonlinear}, the geometrical interpretation of the Jacobi fields will be exploited in order to insure the completeness, after the perturbation, of the exact solutions.

\begin{ackn} 
The first author is partially supported by the Italian project FIRB--IDEAS ``Analysis and Beyond'', the second author is supported by the  Italian PRIN 2006 {\sl ``Problemi a frontiera libera, transizioni di fase e modelli di isteresi''}. 
\end{ackn}

\

\section{Notations and preliminaries}

\medskip

We fix now the notations that will be used throughout this paper. Let $(M,\bar g)$ be a compact smooth $n$--dimensional Riemannian manifold without boundary an let $2 \leq 2k <n$. Taking advantage of this second assumption, we introduce the following formalism for the conformal change 
\begin{eqnarray*}
\bar{g}_u  & := & u^{\frac{4k}{n-2k}}\, \bar{g} ,
\end{eqnarray*}
where the conformal factor $u>0$ is a positive smooth function. In this context $\bar g$ will be referred as the background metric. At a first time the $\sigma_k$--equation for the conformal factor $u$ can be formulated as 
\begin{eqnarray*}
\sigma_k \big(\, \bg_u^{-1} A_{\bg_u} \big) & = &  
2^{-k} \, \hbox{${n \choose k}$}.
\end{eqnarray*}
We recall that the Schouten tensor of $\bg_u$ is related to the one of $A_{\bg}$ by the conformal transformation law
\bea
A_{\bg_{u}} & = & A_{\bg} - 
\tfrac{2k}{n-2k}  u^{-1}{\nabla^2 u}  + 
\tfrac{2kn}{(n-2k)^2} u^{-2}  {du \otimes du}  - 
\tfrac{2k^2}{(n-2k)^2}  u^{-2}  {|du|^2}  \bg  ,
\eea
where $\nabla^2$ and $|\cdot|$ are computed with respect to the background metric $\bg$.
For technical reasons, it is convenient to set
\begin{eqnarray*}
\label{endo}
B_{\bg_u}  & := & \tfrac{n-2k}{2k} \,  u^{\frac{2n}{n-2k}} \, \bg_u^{-1} \cdot A_{\bg_u}
\end{eqnarray*}
and to reformulate the $\sigma_k$--equation as 
\begin{eqnarray}
\label{eq}
  \mathcal{N} (u, \bg)  & := & \sigma_k  \left( B_{\bg_{u}}
  \right)  - \hbox{ ${n \choose k}$} \big( \tfrac{n-2k}{4k} \big)^k u^{\frac{2kn}{n-2k}}  \,\,\,\,   = \,\,\,\,  0 .
\end{eqnarray}
We notice that if two metrics $\bg$ and ${g}$ are related by $\bg=(v/u)^{4k/(n-2k)}{g}$, then the nonlinear operator enjoys the following {\em conformal equivariance property}
\be\label{confequi}
\mathcal{N} \, (u,\bg)& = & (v/u)^{-\frac{2kn}{n-2k}} \, \mathcal{N} \, (v,g).
\ee 
The linearized operator of $\mathcal{N} (\,\cdot \,, \bg)$ about $u$ is defined as
\be
\label{linope}
\mathbb{L}(u,\bg)\,[w]  &:= & \left.
\frac{d}{ds} \right|_{s=0} \mathcal{N}\, (u  + sw,\bg).
\ee 
Most part of the analysis in this paper (Sections \ref{analisi-del} and \ref{global-linear-analysis}) is concerned with the study of the mapping properties of the linearized operator about the approximate solutions $g_\e$'s, that we will write in the form $u_\e^{4k/(n-2k)} \bg$.
As a direct consequence of the property \eqref{confequi}, we have the following \emph{conformal equivariance property} for the linearized operator
\begin{equation}\label{confequilin}
\mathbb{L}(u,\bg)[w]\,\,=\,\,(v/u)^{-\frac{2kn}{n-2k}}\,
\mathbb{L}(v,g)[(v/u)\, w].
\end{equation}

\

\section{Delaunay and Schwarzschild type metrics on $\R\times \mathbb{S}^{n-1}$}

\label{Delaunay-Schwartz type metrics}

\noindent We start this section with the description of a particular family of complete metrics on the cylinder $\R \times \mathbb{S}^{n-1}$ with constant $\sigma_k$-curvature equal to $2^{-k}{n \choose k}$. These metrics are conformal to the cylindrical one $g_{cyl}$  on the whole cylinder $\R \times \mathbb{S}^{n-1}$ (notice that in the following, the cylindrical metric will also be denoted by $g_{cyl}  =  dt^2 + \, d\theta^2$, where $d\theta^2$ represents
the standard metric on $\mathbb{S}^{n-1}$).
%

\medskip

\noindent Let us consider then on the standard cylinder $(\R \times \mathbb{S}^{n-1}, dt^2 + \, d\theta^2)$ a conformal metric $g$ of the form $g \, = \, v^{4k /(n-2k)} \, g_{cyl}$, where the conformal factor $v$ only depends on the $t$ variable, i.e., $v=v(t)$, and let us impose the condition $\sigma_k\, \left( \,g^{-1}A_g \, \right) \, = \, 2^{-k} \, {{n \choose k}} $ or equivalently 
\begin{eqnarray}
\label{constant sigma_k}
\mathcal{N}(v, g_{cyl}) = 0 \,\,.
\end{eqnarray}
It is easy to observe that, under the usual change of coordinates, $t
= -\log |x|$ and $\theta = x/|x|$, this corresponds to look for a metric on $\R^n
\setminus \{0\}$ which has constant positive $\sigma_k$-curvature
and which is radially symmetric. These metrics has been studied in \cite{chang-han-yang05} by Chang, Han and Yang and we refer the reader to their work for further details. Here we just recall the following

%

\begin{pro}[Delaunay-type metrics]\label{collecting-Del}
Let $g_v$ be a a metric on $\mathbb{R}^n$ of the form $g_v \, = \, v^{4k /(n-2k)} \, g_{cyl}$, where $v$ is a smooth positive function only depending on the variable $t\in \mathbb{R}$. Let us define the quantity   
\begin{eqnarray*}\label{eq:energy}
H \, ( v  , \dot{v})  & := & \big[ \,\, v^2 \, - \,
\big(\tfrac{2k}{n-2k}\big)^2 \, \dot{v}^2 \,\, \big]^{k} \,\, - \,\,
v^{\frac{2kn}{n-2k}} 
\end{eqnarray*}
Then, if $H \,(v, \dot{v}) \,\equiv \, H_0  \in \big( \,0 \, \, , \, \frac{2k}{n-2k}\, \big(
\frac{n-2k}{n}\big)^{{n}/{2k}}\, \big)$, 
in correspondence of each $H_0$, there exists a unique solution $v$ to 
\begin{eqnarray}\label{eq:ode}
\big[ \,\, v^2 \, - \, \big(\tfrac{2k}{n-2k}\big)^2 \, \dot{v}^2
\,\, \big]^{k-1} \, \big[ \,\, v \, - \,
\big(\tfrac{2k}{n-2k}\big)^2 \, \ddot{v} \,\, \big] & = &
\tfrac{n}{n-2k} \,\, v^{\frac{2kn}{n-2k} - 1} \,\, .
\end{eqnarray}
satisfying the conditions $\dot{v}(0)=0$, and $\ddot{v}(0)>0$. This family of solutions gives rise to a family of complete and periodic
metrics on $\R\times \mathbb{S}^{n-1}$ satisfying 
\begin{equation}
\label{del-solution}
\sigma_k\, \left( \,B_{g_v} \, \right) \, = \, 2^{-k} \, \hbox{${n \choose k}$}  \,\,\,\,\, in \,\, \mathbb{R}\times \mathbb{S}^{n-1}
\end{equation}
This solution is
periodic and it is such that $0 < v(t) < 1$ for all $t \in \R$.
In the following we will index the conformal factors and the metrics in this family by means of the parameter $\eta \, : = \, v(0)^{{2k}/({n-2k})} 
$ which represents the neck-size. Notice that $0 < \eta < \big( \frac{n-2k}{n}\big)^{{1}/{2k}}$ and that the period of $v_{D, \eta}$ will be denoted by $T_\eta$.
\end{pro}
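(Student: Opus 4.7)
The plan is to first derive the ODE \eqref{eq:ode} from the geometric equation \eqref{constant sigma_k}, then exhibit $H$ as a conserved quantity of the flow, and finally read periodicity, completeness, and the bounds on $v$ directly from the phase portrait in the $(v,\dot v)$-plane.

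First I would carry out the reduction from $\sigma_{k}(g_{v}^{-1}A_{g_{v}})=2^{-k}\binom{n}{k}$ to the scalar ODE. Using the conformal change formula for the Schouten tensor recalled in Section 2 with background $\bar g = g_{cyl}$ and the fact that $v=v(t)$, one computes directly the entries of $B_{g_{v}}$ in an orthonormal cylindrical frame $\{\partial_{t},e_{1},\dots,e_{n-1}\}$ with $\{e_{j}\}$ diagonalising the Schouten tensor of the round sphere factor. Since $g_{cyl}=dt^{2}+d\theta^{2}$ is a Riemannian product, the tensor $A_{g_{cyl}}$ is constant diagonal, and all mixed derivatives of $v$ vanish. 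A direct calculation then shows that $B_{g_{v}}$ is diagonal with only two distinct eigenvalues, one of multiplicity $1$ coming from the $\partial_{t}$ direction (containing $\ddot v$) and one of multiplicity $n-1$ coming from the spherical directions (algebraic in $v,\dot v$). Plugging these into the expansion of $\sigma_{k}$ for an endomorphism with two eigenvalues and simplifying yields precisely \eqref{eq:ode}.

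Second, I would check that $H$ is a first integral. A direct differentiation gives
\begin{equation*}
\tfrac{d}{dt}H(v,\dot v)\,=\,2k\,\bigl[v^{2}-\bigl(\tfrac{2k}{n-2k}\bigr)^{2}\dot v^{2}\bigr]^{k-1}\dot v\,\bigl[v-\bigl(\tfrac{2k}{n-2k}\bigr)^{2}\ddot v\bigr]-\tfrac{2kn}{n-2k}v^{\frac{2kn}{n-2k}-1}\dot v,
\end{equation*}
which vanishes identically on solutions of \eqref{eq:ode}. Thus any maximal solution stays on a level set $\{H=H_{0}\}$.

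Third, I would analyse these level sets. The function $F(v):=H(v,0)=v^{2k}-v^{2kn/(n-2k)}$ has a unique interior maximum on $(0,1)$ at $v_{*}=((n-2k)/n)^{(n-2k)/(4k^{2})}$, with $F(v_{*})=\frac{2k}{n-2k}\bigl(\frac{n-2k}{n}\bigr)^{n/2k}$, $F(0)=F(1)=0$, and $F$ is negative outside $[0,1]$. Hence for each $H_{0}$ in the open interval stated in the proposition, the equation $F(v)=H_{0}$ has exactly two roots $0<v_{\min}<v_{\max}<1$, and the set $\{H(v,\dot v)=H_{0}\}$ is a smooth closed curve surrounding $(v_{*},0)$ in the strip $v^{2}>(2k/(n-2k))^{2}\dot v^{2}$. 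The initial conditions $\dot v(0)=0$, $\ddot v(0)>0$ single out the starting point $(v_{\min},0)$ on this curve, which combined with standard existence and uniqueness for \eqref{eq:ode} (the equation is regular on the open strip) produces a unique solution $v$; the orbit closes up in finite time giving a positive minimal period $T_{\eta}$, and $v$ is periodic, smooth, bounded with $0<v_{\min}\le v(t)\le v_{\max}<1$. The relation $\eta=v(0)^{2k/(n-2k)}=v_{\min}^{2k/(n-2k)}$ then monotonically parametrises the family and the stated range of $\eta$ is just the image of $v_{\min}\in(0,v_{*})$ under this map.

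Finally, completeness of $g_{v}=v^{4k/(n-2k)}g_{cyl}$ on $\mathbb{R}\times\mathbb{S}^{n-1}$ is immediate because $g_{cyl}$ is complete and $v$ is bounded above and bounded uniformly away from zero by $v_{\min}$, so $g_{v}$ is quasi-isometric to $g_{cyl}$. The main obstacle in the argument is the phase-plane analysis in the third step: ensuring that the level curve really is compact and encircles a single equilibrium requires the precise sharp threshold $\frac{2k}{n-2k}\bigl(\frac{n-2k}{n}\bigr)^{n/2k}$, and one must rule out trajectories escaping the admissible strip $v>0$, $v^{2}>(2k/(n-2k))^{2}\dot v^{2}$ where the ODE \eqref{eq:ode} is genuinely elliptic; both follow by checking that on the boundary of this strip $H$ is strictly smaller than any admissible $H_{0}$.
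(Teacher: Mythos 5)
Your proposal is correct, and it is essentially the standard argument: the paper itself gives no proof of this proposition but recalls it from Chang--Han--Yang \cite{chang-han-yang05}, where the classification is obtained by exactly this reduction to an autonomous ODE with the first integral $H$ and a phase-plane analysis of its level sets. Your computations check out: $\tfrac{d}{dt}H$ vanishes on solutions of \eqref{eq:ode}; the maximum of $F(v)=v^{2k}-v^{2kn/(n-2k)}$ at $v_{*}=\big(\tfrac{n-2k}{n}\big)^{(n-2k)/(4k^{2})}$ equals $\tfrac{2k}{n-2k}\big(\tfrac{n-2k}{n}\big)^{n/2k}$, which is the stated threshold; on the level set one has $v^{2}-\big(\tfrac{2k}{n-2k}\big)^{2}\dot v^{2}=(H_{0}+v^{2kn/(n-2k)})^{1/k}\geq H_{0}^{1/k}>0$, so the possibly degenerate factor raised to the power $k-1$ never vanishes and Cauchy--Lipschitz applies along the whole orbit; and $v_{*}^{2k/(n-2k)}=\big(\tfrac{n-2k}{n}\big)^{1/2k}$ gives the stated range of $\eta$. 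The only part left at the level of a sketch is the frame computation reducing $\sigma_{k}(B_{g_{v}})=2^{-k}\binom{n}{k}$ to \eqref{eq:ode}, which is routine and is anyway the computation carried out in the cited references.
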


As anticipated in the introduction, the first step in our strategy amounts to build approximate solutions on the connected sum of a finite number of Delaunay type solutions with possibly different neck-size parameter $(D_{\eta_1}, g_{\eta_1}) , \ldots , (D_{\eta_N}, g_{\eta_N})$. To this end, we need to modify the original metrics in a neighborhood of the points that we are going to excise, obtaining a new metric in the so called {\em neck region}. In the scalar curvature case ($k=1$), a clever choice turns out to be the (space-like) \emph {Schwarzschild} metric. This is a complete scalar flat metric conformal to the cylindrical metric $g_{cyl}$ on $\mathbb{R}\times\mathbb{S}^{n-1}$. The explicit formula is given by
$$
g \,\,:= \,\, \cosh\left(\tfrac{n-2}{2}t\right)^{\f{4}{n-2}}g_{cyl} \, .
$$
In a similar way, it is easy to construct a complete conformal metric on $\mathbb{R}\times\mathbb{S}^{n-1}$ with zero $\sigma_{k}$--curvature, for all $2\leq2k<n$. We have
\begin{pro}[Schwarzchild metrics]
\label{scharz}
Let $g_{v}$ be a metric on $\mathbb{R}\times\mathbb{S}^{n-1}$ of the form $g_{v}=v^{4k/(n-2k)}g_{cyl}$, where $v$ is a positive smooth function depending only on $t\in\mathbb{R}$. Let us define the quantity $$h(t):=v^{2}(t)-\big(\tfrac{2k}{n-2k}\big)^{2}\dot{v}^{2}(t).$$ Then, if $h_{0}:=h(0)>0$,  the family of positive solutions to the equation
$$
\sigma_{k}(B_{g_{v}})=0\quad\mbox{in }\mathbb{R}\times\mathbb{S}^{n-1}\\
$$
is given by $v(t)=\sqrt{h_{0}}\cosh\left(\tfrac{n-2k}{2k}t-c\right),$ $c\in\mathbb{R}$. In the following, the solution with $c=0$ will be denoted by $v_\Sigma$.
\end{pro}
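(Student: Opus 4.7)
My plan is to reduce the fully nonlinear PDE $\sigma_k(B_{g_v}) = 0$ to an explicit linear second order ODE for $v(t)$. The first step is to compute the Schouten tensor of the cylindrical metric $g_{cyl}$: since $\mathrm{Ric}_{g_{cyl}}$ vanishes on $\partial_t$ and equals $(n-2)$ times the round metric on the $\mathbb{S}^{n-1}$ factor, one obtains $A_{g_{cyl}} = -\tfrac{1}{2}\, dt^2 + \tfrac{1}{2}\, d\theta^2$. Because $v$ depends only on $t$ and $g_{cyl}$ is a Riemannian product, the only nonvanishing component of the $g_{cyl}$-Hessian is $\nabla^2 v(\partial_t,\partial_t) = \ddot v$. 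Plugging into the conformal transformation law of Section 2 and then multiplying by $\tfrac{n-2k}{2k} v^{2n/(n-2k)} g_v^{-1}$, the endomorphism $B_{g_v}$ becomes diagonal with two distinct eigenvalues,
\[\lambda_t = -\tfrac{n-2k}{4k}\, v^2 - v \ddot v + \tfrac{n-k}{n-2k}\dot v^2, \qquad \lambda_s = \tfrac{n-2k}{4k}\, v^2 - \tfrac{k}{n-2k}\dot v^2,\]
with multiplicities $1$ and $n-1$ respectively; a direct comparison yields $\lambda_s = \tfrac{n-2k}{4k}\, h(t)$.

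The second step is to expand the $k$-th elementary symmetric polynomial on these two eigenvalues:
\[\sigma_k(B_{g_v}) = \lambda_s^{k-1}\left[\binom{n-1}{k}\lambda_s + \binom{n-1}{k-1}\lambda_t\right].\]
By hypothesis $h_0 > 0$, so $\lambda_s(0) > 0$ and, by continuity, $\lambda_s > 0$ in a neighborhood of the origin. There the equation $\sigma_k(B_{g_v}) = 0$ reduces to the vanishing of the bracketed factor, i.e., $\lambda_t + \tfrac{n-k}{k}\lambda_s = 0$ (using $\binom{n-1}{k}/\binom{n-1}{k-1} = (n-k)/k$). A small miracle now occurs: the $\dot v^2$ contributions cancel and all the $k$-dependence collapses, producing
\[\lambda_t + \tfrac{n-k}{k}\lambda_s = \tfrac{(n-2k)^2}{4k^2}\, v^2 - v \ddot v.\]
Dividing by $v > 0$ gives the linear second order ODE $\ddot v = \bigl(\tfrac{n-2k}{2k}\bigr)^{2} v$.

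Finally, differentiating $h$ once and substituting this ODE immediately yields $\dot h \equiv 0$, so $h(t) \equiv h_0 > 0$ on all of $\mathbb{R}$. This keeps $\lambda_s$ strictly positive everywhere and justifies the ODE reduction globally. The general solution of $\ddot v = \omega^{2} v$, with $\omega := \tfrac{n-2k}{2k}$, is $v(t) = A\cosh(\omega t) + B\sinh(\omega t)$; the first integral constraint reads $A^2 - B^2 = h_0$, and positivity of $v$ on all of $\mathbb{R}$ forces $|B| < A$, so we may write $v(t) = \sqrt{h_0}\,\cosh(\omega t - c)$ for a unique $c \in \mathbb{R}$, as claimed.

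The main obstacle is the algebraic cancellation in $\lambda_t + \tfrac{n-k}{k}\lambda_s$: it is not obvious \emph{a priori} that the quadratic $\dot v^2$ terms disappear and that the resulting equation is linear and independent of $k$. Once that identity is established, the rest is elementary: a linear constant-coefficient ODE together with a conserved quantity that pins down the solution explicitly.
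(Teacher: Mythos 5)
Your proof is correct, and since the paper states this proposition without proof, your computation is precisely the standard argument it leaves implicit: diagonalize $B_{g_v}$ with eigenvalues $\lambda_t$ (multiplicity $1$) and $\lambda_s=\tfrac{n-2k}{4k}\,h$ (multiplicity $n-1$), use the binomial identity to reduce $\sigma_k(B_{g_v})=0$ on $\{\lambda_s>0\}$ to the linear ODE $\ddot v=\bigl(\tfrac{n-2k}{2k}\bigr)^2 v$, and exploit the conserved quantity $h$ to propagate $\lambda_s>0$ globally and identify the $\cosh$ profile. The one step to spell out slightly more carefully is the globalization: $h$ is a priori constant only on the connected component of $\{\lambda_s>0\}$ containing $t=0$, and the open--closed argument (constancy $h\equiv h_0>0$ up to any boundary point of that component forces the component to be all of $\mathbb{R}$) is what rules out, for $k\geq 2$, solutions that degenerate to $\lambda_s\equiv 0$ outside a bounded interval; your sketch indicates this correctly.
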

For sake of completeness and for future convenience, we recover from \cite{cat-maz} the following formula for the linearized $\sigma_k$-operator about the Schwarzschild type metric.
\begin{lem}\label{lin-scharz}
The linearized $\sigma_k$-operator about the $\sigma_k$-Schwarzschild metric 
\bea
\mathbb{L}^{0}(v_{\Sigma}, g_{cyl})[w]:=\left.\frac{d}{ds}\right|_{s=0}  \sigma_k \left( B_{g_s} \right) \, ,
\eea
where $g_s = g_{v+ sw}$, is given by
\begin{eqnarray}
\label{1st expression D} \mathbb{L}^{0}(v_{\Sig},g_{cyl}) [w] = -C_{n,k}\, v_{\Sig}\,h_{\Sig}^{k-1} \big[ \, \partial^{2}_{t}+\tfrac{n-k}{k(n-1)}\D_{\t}-\big(\tfrac{n-2k}{2k}\big)^{2}\big]\, w \,,
\end{eqnarray}
where $ C_{n,k} \,\, = \,\,  {n-1 \choose k-1} \,\, \left( \tfrac{n-2k}{4k}
\right)^{k-1}$.
\end{lem}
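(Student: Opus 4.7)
The plan is to proceed by direct calculation, exploiting the rigid two-eigenvalue structure of $B_{g_{v_{\Sigma}}}$. Inserting a purely radial $v=v(t)$ into the conformal transformation law for $A_{g_{v}}$ on $(\mathbb{R}\times\mathbb{S}^{n-1},g_{cyl})$, and using $A_{g_{cyl}}=\tfrac{1}{2}(d\theta^{2}-dt^{2})$, $\nabla^{2}v=\ddot{v}\,dt^{2}$, $|dv|^{2}=\dot{v}^{2}$, the endomorphism $B_{g_{v}}=\tfrac{n-2k}{2k}\,v^{2}g_{cyl}^{-1}A_{g_{v}}$ becomes block-diagonal with a single eigenvalue $\lambda_{t}$ on $\partial_{t}$ and an $(n-1)$-fold repeated eigenvalue $\lambda_{\theta}=\tfrac{n-2k}{4k}\bigl(v^{2}-(\tfrac{2k}{n-2k})^{2}\dot{v}^{2}\bigr)=\tfrac{n-2k}{4k}\,h$ on the sphere factor. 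At $v=v_{\Sigma}$ one has $h\equiv h_{0}$, and the condition $\sigma_{k}(B_{g_{v_{\Sigma}}})=0$ reduces to $\binom{n-1}{k}\lambda_{\theta}+\binom{n-1}{k-1}\lambda_{t}=0$, i.e.\ $\lambda_{t}=-\tfrac{n-k}{k}\lambda_{\theta}$.

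I would then invoke the standard formula $D\sigma_{k}(B)[\delta B]=\mathrm{tr}(T_{k-1}(B)\,\delta B)$, where $T_{k-1}$ is the $(k-1)$-st Newton transformation. An elementary symmetric polynomial calculation, combined with the Schwarzschild relation above, shows that $T_{k-1}(B_{g_{v_{\Sigma}}})$ is block-diagonal with eigenvalue $\binom{n-1}{k-1}\lambda_{\theta}^{k-1}$ on $\partial_{t}$ and $\tfrac{n-k}{k(n-1)}\binom{n-1}{k-1}\lambda_{\theta}^{k-1}$ on the sphere block; using $\lambda_{\theta}^{k-1}=(\tfrac{n-2k}{4k})^{k-1}h_{\Sigma}^{k-1}$, this already delivers the overall prefactor $C_{n,k}\,h_{\Sigma}^{k-1}$ and the weight $\tfrac{n-k}{k(n-1)}$ in front of $\Delta_{\theta}$ that appear in the claimed formula.

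The substantive step is the variation $\delta B=2wv_{\Sigma}^{-1}B_{g_{v_{\Sigma}}}+\tfrac{n-2k}{2k}v_{\Sigma}^{2}g_{cyl}^{-1}\delta A_{g_{v}}$ for $w=w(t,\theta)$. Differentiating the three constituent tensors $v^{-1}\nabla^{2}v$, $v^{-2}dv\otimes dv$, and $v^{-2}|dv|^{2}g_{cyl}$ separately, and using $(\nabla^{2}w)_{tt}=\partial_{t}^{2}w$ and $\mathrm{tr}_{\theta}\nabla^{2}w=\Delta_{\theta}w$, one reads off $\delta B^{t}{}_{t}$ and $\sum_{i\geq 2}\delta B^{i}{}_{i}$ as explicit combinations of $\partial_{t}^{2}w$, $\Delta_{\theta}w$, $\partial_{t}w$ and $w$. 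Inserting these into $\mathrm{tr}(T_{k-1}\,\delta B)$ with the weights above, the terms linear in $\partial_{t}w$ and the $\dot{v}_{\Sigma}^{2}\,w$ contributions cancel between the $t$-block and the weighted $\theta$-trace precisely because of the factor $\tfrac{n-k}{k(n-1)}$; the remaining zeroth-order piece collapses to $\ddot{v}_{\Sigma}\,w$ by the Schwarzschild relation $\lambda_{t}+\tfrac{n-k}{k}\lambda_{\theta}=0$; and the explicit form $v_{\Sigma}=\sqrt{h_{0}}\cosh(\tfrac{n-2k}{2k}t)$ yields $\ddot{v}_{\Sigma}=(\tfrac{n-2k}{2k})^{2}v_{\Sigma}$, producing the announced zeroth-order coefficient.

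The main obstacle is the bookkeeping at this last step: the variations of the three constituent tensors of $A_{g_{v}}$ each contribute several terms to both $\delta B^{t}{}_{t}$ and $\sum_{i\geq 2}\delta B^{i}{}_{i}$, and one has to cleanly separate the cancellations coming from the algebraic Schwarzschild relation between $\lambda_{t}$ and $\lambda_{\theta}$ from those coming from the specific weighting $\tfrac{n-k}{k(n-1)}$ dictated by $T_{k-1}$. Once the terms are organized along these two mechanisms, the final factorization is immediate.
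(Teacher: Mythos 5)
Your computation is correct, and the claimed cancellations do occur exactly as you describe: writing $\delta B=2wv_{\Sigma}^{-1}B+\tfrac{n-2k}{2k}v_{\Sigma}^{2}g_{cyl}^{-1}\delta A$, the $\dot v_{\Sigma}\dot w$ terms cancel between $\delta B^{t}{}_{t}$ and $\tfrac{n-k}{k(n-1)}\mathrm{tr}_{\theta}\delta B$, the $v_{\Sigma}^{-1}\dot v_{\Sigma}^{2}w$ terms cancel likewise, the remaining zeroth-order piece is $w\ddot v_{\Sigma}+2wv_{\Sigma}^{-1}\bigl(\lambda_{t}+\tfrac{n-k}{k}\lambda_{\theta}\bigr)=\bigl(\tfrac{n-2k}{2k}\bigr)^{2}v_{\Sigma}w$, and the Newton-tensor eigenvalues supply the prefactor $C_{n,k}h_{\Sigma}^{k-1}$ and the weight $\tfrac{n-k}{k(n-1)}$. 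Note that the paper itself offers no proof of this lemma: it simply quotes the formula from the companion paper \cite{cat-maz}. Your route via the two-eigenvalue structure of $B_{g_{v_{\Sigma}}}$ and $D\sigma_{k}(B)[\delta B]=\mathrm{tr}(T_{k-1}(B)\,\delta B)$ is the standard computation behind that reference, as is corroborated by formula \eqref{sigmaj}, which records precisely the eigenvalues of the Newton transformation that your argument produces.
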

Incidentally, we note that the computation (see \cite{cat-maz}) leading to \eqref{1st expression D} also shows that 
\begin{eqnarray}
\label{sigmaj}
\sigma_{k-1-j} (B_0) & = & \left(\tfrac{n-2k}{4k} \right)^{k-1-j} \, h_{\Sig}^{k-1-j}\,\,\tfrac{1+j}{k}\,\hbox{${n\choose k-1-j}$} \, .
\end{eqnarray}
From this it follows that the $\sigma_{k}$--Schwarzschild metric $g_{\Sig}$ belongs to $\overline{\Gamma}_{k}^{+}\cap \Gamma_{k-1}^{+}$, for $2\leq 2k<n$.

\section{Approximate solutions}
\label{s:as}

In this section we first describe the construction of the connected sum of a finite number of Delaunay-type solutions $D_{\eta_1}, \ldots, D_{\eta_N}$ and then we define on this new manifold a family of metrics which will represent the approximate solutions to our problem. 

\medskip

Since the whole construction is local, we restrict ourself to the connected sum of two Delaunay type solutions $(D_{\etu},g_{1})$ and $(D_{\etd},g_{2})$. In the following we will denote by $M_{\e}:=D_{\etu}\sharp_{\e} D_{\etd}$ the manifold obtained by excising two geodesic balls of radius $\e\in(0,1)$ centered at $p_{1}\in D_{\etu}$ and $p_{2}\in D_{\etd}$ and identifying the two left over boundaries. The manifold $D_{\eta_1}$ and $D_{\eta_2}$ are endowed with the metrics 
\bea
g_1 \,\, = \,\,v_{D,{\etu}}^{\frac{4k}{n-2k}}(dr_1^2 + g_{\mathbb{S}^{n-1}}) \quad & \hbox{and}& \quad
g_2	\,\, = \,\,v_{D,{\etd}}^{\frac{4k}{n-2k}}(dr_2^2 + g_{\mathbb{S}^{n-1}}) \,,
\eea
respectively. Starting from $g_1$ and $g_2$, Êwe will define on $M_{\e}$ a new metric $g_{\e}$ which agrees with the old ones outside the balls of radius one around $p_1$ and $p_2$ and which is modeled on (a scaled version of) the $\sigma_{k}$--Schwarzschild metric in the neck region.

\medskip

To describe the construction, we consider the diffeomorphisms given by the Êexponential maps
$$\exp_{p_{i}}: B(O_{p_{i}},1)\subset T_{p_{i}}D_{\eti}\longrightarrow B(p_{i},1)\subset D_{\eta_i}, \quad i=1,2.$$ Next, to fix the notations, we identify the tangent spaces $T_{p_{i}}D_{\eti}$ with $\R^{n}$. It is well known that this identification yields normal coordinates centered at the points $p_{i}$, namely
$$x: B(p_{1},1)\longrightarrow \R^{n}\quad\quad \mbox{and}\quad\quad y: B(p_{2},1)\longrightarrow \R^{n}.$$
We introduce now asymptotic cylindrical coordinates on the punctured ball $B^{*}(0,1)=x\left(B^{}(p_{1},1) \setminus \{p_1\}\right)$ setting $t:=\log\e-\log|x|$ and $\t:=x/|x|$. In this way we have the diffeomorphism $B^{*}(0,1)\simeq(\log\e,+\infty)\times\mathbb{S}^{n-1}$. Analogously, we consider the diffeomorphism $y\left(B^{}(p_{2},1) \setminus \{ p_2\}\right)=B^{*}(0,1)\simeq(-\infty,-\log\e)\times\mathbb{S}^{n-1}$, this time setting $t:=-\log\e+\log|y|$ and $\t:=y/|y|$. 

\medskip

In order to define the differential structure of $M_{\e}$, we excise a geodesic ball $B(p_{i},\e)$ from $D_{\eti}$, obtaining an annular region $A(p_{i},1,\e):=B(p_{i},1)\setminus B(p_{i},\e)$, $i=1,2$. The asymptotic cylindrical coordinates introduced above can be used to define a natural coordinate system on the neck region
$$
(t,\t):\,\left[A(p_{1},1,\e)\sqcup A(p_{2},1,\e)\right]/\sim\,\,\longrightarrow (\log\e,-\log\e)\times\mathbb{S}^{n-1}=:N_{\e},
$$
where $\sim$ denotes the relation of equivalence which identifies the boundaries of $B(p_{1},\e)$ and $B(p_{2},\e)$, namely
$$q_{1}\sim q_{2}\,\,\Longleftrightarrow\,\, x/|x|(q_{1})=y/|y|(q_{2})\quad\mbox{and}\quad |x|(q_{1})=\e=|y|(q_{2}).
$$
Clearly, in this coordinates, the two identified boundaries correspond now to the set $\{0\}\times\mathbb{S}^{n-1}$. To complete the definition of the differential structure of $M_{\e}$ it is sufficient to consider the old coordinate charts on $D_{\eti}\setminus B(p_{i},1)$, $i=1,2$. 

\medskip

We are now ready to define on $M_{\e}$ the approximate solution metric $g_{\e}$. First of all, we define $g_{\e}$ to be equal to the $g_{i}$ on $D_{\eti}\setminus B(p_{i},1)$, $i=1,2$. To define $g_{\e}$ in the neck region, we start by observing that the choice of the normal coordinate system allows us to expand the two metric $g_{1}$ and $g_{2}$ around $p_{1}$ and $p_{2}$ respectively as 
$$
g_{1}=\left[\d_{\a\b}+\bigo{|x|^{2}}\right]\,dx^{\a}\otimes dx^{\b}\,\quad\mbox{and}\,\quad g_{2}=\left[\d_{\a\b}+\bigo{|y|^{2}}\right]\,dy^{\a}\otimes dy^{\b}.
$$
Recalling thatÊ
the metrics $g_1$ and $g_2$ are \emph{locally conformally flat} and using the $(t,\t)$-coordinates introduced above, we can write
\bea
 g_{1}&=&u_{1}^{\frac{4k}{n-2k}}(1+c_{1})(dt^2 + d\theta^2), \,\,\hbox{ with } u_1(t):= \e^{\frac{n-2k}{2k}} e^{-\frac{n-2k}{2k} t}\\
 Ê g_{2}&=&u_{2}^{\frac{4k}{n-2k}}(1+c_{2})(dt^2 + d\theta^2), \,\,\hbox{ with } u_2(t):= \e^{\frac{n-2k}{2k}} e^{\frac{n-2k}{2k} t} Ê
 \eea
Now, we fix as background metric on $M_\e$ the following
$$
\bar{g}:=
\begin{cases}
g_{i}\quad\quad&\hbox{on }D_{\eti}\setminus B(p_{i},1)\\
(1 + c) (dt^2 + d\theta^2)\quad\quad&\hbox{on }A(p_{1},1,\e)\sqcup A(p_{2},1,\e)]/\sim
\end{cases}
$$
where
$$
c \,\,:=\,\, \eta \, c_1 + (1-\eta) \, c_2,
$$
with $\eta$ a smooth and non decreasing cut off function such that Ê
$\eta: (\log\e, -\log\e)
\rightarrow [0,1]$ and
identically equal to $1$ in $(\log\ep, -1]$ and $0$ in
$[1,-\log\ep)$.
Subsequentely, we consider another non increasing smooth  function $\chi :
(\log\ep, -\log\ep) \rightarrow [0,1]$ which is identically equal to $1$ in $(\log\ep, -\log \ep
-1]$ and which satisfies $\lim_{t\rightarrow -\log\ep} \chi = 0$. Using these cut-off functions, we can now define a new conformal factor
\begin{eqnarray}
\label{def-approx-conf}
u_{\e}& := &
\begin{cases}
1\quad\quad&\hbox{on }D_{\eta_i}\setminus B(p_{i},1)\\
\chi(t) \, u_{1} Ê+ \chi(-t) \,
u_{2} \quad\quad&\hbox{on }A(p_{1},1,\e)\sqcup A(p_{2},1,\e)]/\sim
\end{cases}
\end{eqnarray}
Finally we define on $M_\e$ the family of approximate solution metrics $g_\e$, by setting
\begin{equation}
\label{approx-metric}
g_\e\,\, :=\,\, u_\e^{\frac{4k}{n-2k}}\bg \,\, .
\end{equation}
To conclude this section, we observe that with this definition we immediately have that for every $m \in \mathbb{N}$ the approximate solution metrics converge to $g_i$ on the compact subset of $D_{\eta_i} \setminus \{p_i\}$ with respect to the $C^m$-topology when the parameter $\e$ tends to $0$, for $i=1,2$. For these reasons, we expect that the size of the term $\mathcal{N} (u_\e, \bg)$, which represents the fail of $u_\e$ from being an exact solution, will become smaller and smaller when $\e \rightarrow 0$. Finally, we notice that adapting the proof of \cite[Lemma 3.2]{cat-maz} it is straightforward to show that for $\e$ sufficiently small $g_\e$ lies in $\Gamma^{+}_{k-1}$.

\section{Analysis of the linearized operator about the Delaunay-type metrics}
\label{analisi-del}
In this section we discuss some boundary value problems for the linearized operator introduced in \eqref{linope} about Delaunay-type metrics.
This local analysis, will find its application in Section \ref{global-linear-analysis}. 

\medskip

We start by recovering from \cite{mn} the expression for the linearized operator about a Delaunay-type metric $g_{D, \eta} = v_{D,\eta}^{4k/(n-2k)} \, g_{cyl}$. We set
\begin{eqnarray}
\label{h-del-F}
h \,\, := \,\,  v_{D, \eta}^2 \, - \, 
\big(\tfrac{2k}{n-2k}\big)^2 \, \dot{v}_{D,\eta}^2  \quad \quad \hbox{and} \quad \quad  F \,\, :=  \,\, {v_{D,\eta}^{\frac{2kn}{n-2k}}} \big/ \big( \, {H \,
+ \, v_{D,\eta}^{\frac{2kn}{n-2k}}} \, \big) Ê\,\, 
\end{eqnarray}
and we recall that that $h(t)\,>\,0$ for any $t\in \mathbb{R}$ (see \cite{chang-han-yang05} and \cite{mn}). With these definitions at hand, we can state the following
\begin{lem}[Linearization about the Delaunay-type metrics]
\label{Linear-Del}
The linearized operator about the Dealunay-type solution $v_{D, \eta}$ is given by
\begin{eqnarray}
\label{linear-del-expression} 
\mathbb{L} \, (v_{D,\eta}\, , g_{cyl})\,[w] Ê& = & Ê - \,\, C_{n,k} \,\,
v_{D,\eta} \,\, Êh^{\frac{k-1}{2}} Ê\,\,\left\{\,
\partial_t^2 \, + \, {a_\eta} \, \Delta_{\theta} \, - \, p_\eta \,
\right\} \,\,[h^{\frac{k-1}{2}}
\, w] \,\, ,
\end{eqnarray}
where $\Delta_\theta$ is the Laplace-Beltrami operator for standard round metric on $g_{\mathbb{S}^{n-1}}$ and the coefficients $a_\eta$ and $p_\eta$ are given by
\begin{eqnarray}
\label{coefficients D a} a_\eta & := & \tfrac{n-k}{k(n-1)} \,\, + \,\,
\tfrac{n(k-1)}{k(n-1)} \,\,
F \,\, , \\
\label{coefficients D p} p_\eta & := &  \left(\tfrac{n-2k}{2k}\right)^2
\,\, + \,\, \tfrac{n\,(nk\,+\,n\,-\,2k)\,(k\,-\,1)}{2k^2} \, F \,\,
- \,\,
\tfrac{n^2\,(k^2-\,1)}{4k^2} \, F^2 \\
& & \quad \quad \quad \quad \, - \,\, \tfrac{n\,(2kn \,- \,n \,+
\,2k )}{4k} \, v_{D,\eta}^{\frac{4k}{n-2k}} \, F^{\frac{k-1}{k}} \,\, + \,\,
\tfrac{n^2\,k\,(k\,-\,1)}{4k^2} \, v_{D,\eta}^{\frac{4k}{n-2k}} \,
F^{\frac{2k-1}{k}} \,\, . \nonumber
\end{eqnarray}
and the constant $C_{n,k}$ is defined by $C_{n,k} \, := \, \hbox{${n-1 \choose k-1}$} \,\, \left( \tfrac{n-2k}{4k}
\right)^{k-1}$. For notational convenience we also define the conjugate linearized operator by
\begin{eqnarray}
\label{conj}
\mathcal{L}_\eta & := & \partial_t^2 \, + \, {a_\eta} \, \Delta_{\theta} \, - \, p_\eta \,\, .
\end{eqnarray}
Moreover, we have that there exists a positive constant $c=c(n,k)>0$ such that for every $j \geq n+1$ and every admissible Delaunay parameter $\eta$
\begin{eqnarray}
\label{coercivity}
a_{\eta} \,\lambda_j \, + \,p_{\eta}  & \geq & c\,\,,\end{eqnarray}
where the positive real numbers $\lambda_j$, $j \in \mathbb{N}$, denote the eigenvalues (counted with multiplicity) of $\Delta_{\theta}$, i.e., $-\Delta_{\theta} \phi_j \, = \, \lambda_j \, \phi_j$. 
\end{lem}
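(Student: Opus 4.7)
For the expression \eqref{linear-del-expression}, since $v_{D,\eta}=v(t)$ is radial in the cylindrical coordinates, the endomorphism $B_{g_v}$ is diagonal in the orthogonal split $\mathbb{R}\partial_t\oplus T\mathbb{S}^{n-1}$, with a simple $t$-eigenvalue $\mu^t(v,\dot v,\ddot v)$ and an $(n-1)$-fold angular eigenvalue $\mu^\theta(v,\dot v)$. A direct application of the conformal law for the Schouten tensor gives $\mu^\theta=\tfrac{n-2k}{4k}\,h$, with $h$ as in \eqref{h-del-F}, while $\mu^t$ is an explicit combination of $v$, $\dot v$ and $\ddot v$. I would then differentiate
$$
\mathcal{N}(v,g_{cyl})\;=\;\binom{n-1}{k-1}\,\mu^t(\mu^\theta)^{k-1}+\binom{n-1}{k}(\mu^\theta)^k-\binom{n}{k}\bigl(\tfrac{n-2k}{4k}\bigr)^k\,v^{2kn/(n-2k)}
$$
with respect to $v$ at $v=v_{D,\eta}$. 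The chain rule produces $\partial_t^2 w$ (from the $\ddot v$-dependence of $\mu^t$), $\Delta_\theta w$ (from the angular components of $|dv|^2$ entering via the conformal law) and a zeroth-order multiplication. Repeated use of the ODE \eqref{eq:ode} to eliminate $\ddot v$ from the coefficients, together with the conjugation trick $w\mapsto h^{(k-1)/2}w$ to absorb the first-order $\partial_t h$-terms, recovers exactly \eqref{linear-del-expression} with coefficients \eqref{coefficients D a}--\eqref{coefficients D p}. The bookkeeping is carried out in \cite{mn}, to which I would refer for the algebraic details.

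For the coercivity \eqref{coercivity}, recall that the eigenvalues of $-\Delta_\theta$ on $\mathbb{S}^{n-1}$ are $m(m+n-2)$, $m=0,1,2,\ldots$, with multiplicities $1$, $n$, $\tfrac{n(n+1)}{2}-1,\ldots$. Counting with multiplicity, the threshold $j\ge n+1$ is therefore equivalent to $\lambda_j\ge 2n$: it excludes precisely the constants and the degree-one spherical harmonics, which on the round sphere carry the Jacobi fields generated by dilations and translations. The estimate \eqref{coercivity} thus reduces to the uniform pointwise algebraic bound
$$
p_\eta(t)\,+\,2n\,a_\eta(t)\;\ge\;c(n,k)\;>\;0,\qquad t\in\mathbb{R},\;\;\eta\text{ admissible}.
$$

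The main obstacle is establishing this last inequality uniformly. Both $a_\eta$ and $p_\eta$ are explicit polynomials in the two bounded variables $F\in[0,1]$ and $V:=v_{D,\eta}^{4k/(n-2k)}\in(0,1)$, but $p_\eta$ contains genuinely negative contributions (notably $-\tfrac{n^2(k^2-1)}{4k^2}F^2$ and $-\tfrac{n(2kn-n+2k)}{4k}VF^{(k-1)/k}$). My plan is to (a) rewrite $p_\eta+2n\,a_\eta$ as a polynomial in $F$ with coefficients polynomial in $V$; (b) verify the bound at the two extreme regimes, namely at $F=0$, where the expression reduces to the strictly positive constant $\bigl(\tfrac{n-2k}{2k}\bigr)^2+\tfrac{2n(n-k)}{k(n-1)}$, and at the Delaunay turning points $\dot v=0$, where $h=v^2$ forces $F=V^{k}$ and the bound reduces to a one-variable polynomial inequality in $V\in(0,1)$; and (c) interpolate at intermediate values either by monotonicity/convexity in $F$ or by rewriting the negative contributions as positive multiples of $H_0$ through the identity $(1-F)(H_0+v^{2kn/(n-2k)})=H_0$ inherited from \eqref{h-del-F}. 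The assumption $2\le 2k<n$ ensures that once $\lambda_j\ge 2n$ the positive leading constants dominate the corrections, and a continuity/compactness argument over the bounded range of admissible parameters $\eta$ then delivers the uniform constant $c=c(n,k)>0$.
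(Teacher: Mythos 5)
This lemma is not proved in the paper at all: both the expression \eqref{linear-del-expression} and the inequality \eqref{coercivity} are quoted from the reference \cite{mn} (the paper later invokes ``\cite[Lemma 5.3]{mn}'' precisely for $a_\eta\lambda_j+p_\eta>0$ when $j\ge n+1$). Your derivation sketch for \eqref{linear-del-expression} is the correct and standard route, and it is consistent with what the cited source does: for radial $v$ the endomorphism $B_{g_v}$ is indeed diagonal with angular eigenvalue $\tfrac{n-2k}{4k}h$, so $\sigma_k(B_{g_v})=\binom{n-1}{k-1}\mu^t(\mu^\theta)^{k-1}+\binom{n-1}{k}(\mu^\theta)^k$, and differentiating plus conjugating by $h^{(k-1)/2}$ kills the first-order term. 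One small misattribution: the $\Delta_\theta w$ term does not come from ``the angular components of $|dv|^2$'' (which vanish for radial $v$); it comes from the angular block of the Hessian term $-\tfrac{2k}{n-2k}u^{-1}\nabla^2 u$ in the conformal transformation law, contracted against the (angular-isotropic) Newton tensor. Your reduction of \eqref{coercivity} is also sound: $j\ge n+1$ is equivalent to $\lambda_j\ge 2n$, and since $a_\eta\ge\tfrac{n-k}{k(n-1)}>0$ the claim follows from the single pointwise bound $p_\eta+2n\,a_\eta\ge c(n,k)$.

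The genuine gap is that this last inequality is the entire non-trivial content of the lemma, and you only offer a program for it, not a proof. Checking $F=0$ and the turning points $\dot v=0$ does not control the intermediate range, and neither proposed interpolation mechanism is established: $p_\eta+2n\,a_\eta$ is not obviously monotone or convex in $F$ (it contains competing terms $-\tfrac{n^2(k^2-1)}{4k^2}F^2$, $-\tfrac{n(2kn-n+2k)}{4k}VF^{(k-1)/k}$ and $+\tfrac{n^2k(k-1)}{4k^2}VF^{(2k-1)/k}$ with $V$ and $F$ varying independently along a Delaunay orbit), and the identity $(1-F)h^k=H_0$ by itself does not convert the negative terms into manifestly positive ones. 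Also note that your claimed value of the expression at $F=0$ is only right for $k\ge2$; for $k=1$ the term $-\tfrac{n(n+2)}{4}V$ survives and must be absorbed using $0<v<1$. A ``continuity/compactness argument over the admissible $\eta$'' cannot substitute for the pointwise estimate, since the admissible range of $\eta$ is an open interval and one needs uniformity as $\eta$ degenerates. To close the lemma you should either carry out the explicit two-variable polynomial estimate on the region swept out by $(F,V)$ along Delaunay orbits, or do what the paper does and cite \cite[Lemma 5.3]{mn} for \eqref{coercivity} just as you cite \cite{mn} for the bookkeeping of \eqref{coefficients D a}--\eqref{coefficients D p}.
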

As a consequence of the last inequality, we will obtain the coercivity of the conjugate linearized operator $\mathcal{L}_\eta$ along the high frequencies (i.e., for $j \geq n+1$).\\

\subsection{Jacobi fields}

\label{jacobi fields}

\noindent Using the conformal equivariance of the equation we introduce new families of solutions which are variations of the standard Delaunay solution with neck-size parameter $\eta$. The infinitesimal generators of these variations will provide us with natural elements sitting in the kernel of the linearized operator $\mathbb{L}(v_{D, \eta}, g_{cyl})$ about $v_{D,\eta}$, namely the Jacobi fields.

\medskip

The first remark is that since the equation \eqref{eq:ode} is autonomous, then the solutions are translation invariant (with respect to the $t$ variable). In particular, for $\tau >0 $, the functions $v_{D,\eta,\tau} (t)$, defined by
\begin{eqnarray}
v_{D, \eta, \tau}(t) & := & v_{D,\eta} (\, t + \log(\tau+1)) \,\, ,
\end{eqnarray}
are still solution to \eqref{eq:ode}. To find other possible families of solutions it is convenient to use the conformal equivariance of equation \eqref{del-solution}. First notice that, writing $t = - \log |x|$ and $\theta = x/|x|$, with $x \in \R^n \setminus \{ 0\}$, the cylindrical metric and the Euclidean one are related by $g_{cyl} = |x|^{-2} g_{\R^n}$ on $\R^n \setminus \{0\}$. As a consequence of \eqref{confequi} we get
\begin{eqnarray}
\mathcal{N} \,( v , g_{cyl}\, ) & = & |x|^{n} \, \mathcal{N} \, (\, |x|^{-\frac{n-2k}{2k}} v , g_{\R^n}) \,\, .
\end{eqnarray}
Hence, if $v(t, \theta)$ solves $\mathcal{N}(v,g_{cyl}) = 0$ on $\R \times \mathbb{S}^{n-1}$, then the function $u (x)$, defined on $\R^n \setminus \{ 0\} $ by
\begin{eqnarray}
\label{correspondence}
u(x) & := & |x|^{-\frac{n-2k}{2k}} \, v (-\log |x| \, , \, x/|x|) \,\, ,
\end{eqnarray}
is a solution to $\mathcal{N} Ê(u, g_{\R^n}) = 0$ on $\R^n \setminus \{ 0\}$. In particular, the Delaunay solutions $v_{D,\eta} (t)$ defined on the cylinder correspond to the radial solutions of the latter equation $u_{D,\eta} (|x|) := |x|^{-(n-2k)/2k} \, v_{D,\eta} (-\log |x|)$ with a pole in the origin. Since the equation satisfied by $u_{D,\eta}$ is clearly translation invariant (due to the fact that the background metric is $g_{\R^{n}}$), we have that, for $b \in \R^n$, the $n$-parameter family of functions $u_{D,\eta,b} (x)$, defined by 
\begin{eqnarray}
u_{D,\eta, b}(x) & := & u_{D,\eta} (|x-b|) \,\,\, = \,\,\, |x-b|^{-\frac{n-2k}{2k}} v_{D,\eta} (-\log|x-b|) \,\, ,
\end{eqnarray}
still satisfies $\mathcal{N}(u_{D,\eta,b} \,, g_{\R^n}) = 0$. These functions present a singularity at $b \in \R^n$ and they are radial with respect to $b \in \R^n$. These new solutions $u_{D,\eta,b} (x)$ defined on $\R^n \setminus \{ b\}$ correspond via \eqref{correspondence} to the solutions $v_{D,\eta,b}(t,\theta)$ Êof the equation \eqref{constant sigma_k}
defined on $\R \times \mathbb{S}^{n-1} \setminus \{(-\log|b|\, , b/|b| )\}$ by
\begin{eqnarray}
v_{D,\eta,b} (t, \theta) & := & |\theta - b e^t|^{-\frac{n-2k}{2k}} v_{D,\eta} (t - \log|\theta - b e^t |) \,\,.
\end{eqnarray}
The last family of solutions comes in the following way. First observe that the function $t \mapsto \bar v_{D,\eta}(t) := v_{D,\eta} (-t)$ is still a solution to \eqref{constant sigma_k} on $\R \times \mathbb{S}^{n-1}$. This corresponds to the fact that on $\R \setminus \{ 0\}$ the Kelvin transform of $u_{D,\eta}$, namely the function 
\begin{eqnarray} Ê
\bar u_{D,\eta} (|x|) & := & |x|^{-\frac{n-2k}{k}}u_{D,\eta} \big( \big| x/|x|^2 \big|\big) \,\,\,\, Ê= \,\,\,\, |x|^{-\frac{n-2k}{2k}} v_{D,\eta} \big( - \log \big| x/|x|^2 \big|\big) \,\,, 
\end{eqnarray}
satisfies $\mathcal{N} \, (\bar u_{D,\eta} \, g_{\R^n}) \, = \, 0$. Now we translate $\bar u_{D,\eta}$ by a vector $a \in \R^n$, obtaining an $n$-parameter family of functions $\bar u_{D,\eta,a}(x) := \bar u_{D,\eta} (|x-a|) = |x-a|^{-(n-2k)/2k} v_{D,\eta} (\log |x-a|)$ and finally we take the Kelvin transforms of 
the $\bar u_{D,\eta,a}$'s obtaining, for $a \in \R^n$ the new family of solutions on $\R^n \setminus \{ 0\}$
\begin{eqnarray}
u_{D,\eta,a} (x) & := & \big| x - a|x|^2 \big|^{-\frac{n-2k}{2k}} v_{D,\eta} \big( -2 \log |x| + \log \big| x - a|x|^2 Ê\big| \,\, \big) Ê\,\, .
\end{eqnarray}
These solutions are no longer radial and present a singularity at the origin. For $a \in \R^n$, they correspond on $\R \times S^{n-1} \setminus \{ (\log |a| \, , a/|a| \, ) \}$ to the solutions
\begin{eqnarray} 
v_{D,\eta,a} (t, \theta) & := & |\theta - a e^{-t}|^{-\frac{n-2k}{2k}} v_{D,\eta} (t + \log |\theta - a e^{-t}|) \,\, .
\end{eqnarray}
In the remaining part of this section we will use all these families of solutions to define some special elements in the kernel of the linearized operator around the Delaunay solutions $v_{D,\eta}$. First of all, we recall that if $\lambda \mapsto v_{D,\eta ,\lambda}$ is a variation of $v_{D,\eta}$ such that for every admissible value of the parameter $\lambda$ 
\begin{eqnarray*}
\mathcal{N} \, (v_{D,\eta , \lambda} \, , g_{cyl}) \, \, = \,\, 0 Ê\quad & \hbox{and} & \quad v_{D, \eta , 0} (t) \,\, = \,\, v_{D, \eta} (t) \,\, ,
\end{eqnarray*}
then it is straightforward to see that 
\begin{equation*}
0\,\,\, = \,\,\, \left. \frac{\partial}{\partial\lambda} \right|_{\lambda = 0} \mathcal{N} \, (v_{D,\eta , \lambda} \, , g_{cyl}) \,\,\, = \,\,\, \mathbb{L} (v_{D,\eta} \, , g_{cyl}) \,\, \left. \frac{\partial}{\partial \lambda} \right|_{\lambda = 0} Êv_{D,\e,\lambda} \,\, ,
\end{equation*}
where $\mathbb{L}(v_{D,\eta} \,, g_{cyl})$ represents the linearized operator around the Delaunay solution $v_{D,\eta}$. The functions $\left. {\partial_\lambda} \right|_{\lambda = 0} Êv_{D,\eta,\lambda}$ are the so called Jacobi fields and they clearly belong to the kernel of $\mathbb{L}(v_{D,\eta} \,, g_{cyl})$. Applying this reasoning to the family of solutions $\alpha \mapsto v_{D, \eta + \alpha}$ and $\tau\mapsto v_{D, \eta, \tau}$, it is natural to define the quantities
\begin{eqnarray}
\label{psio}
\Psi^{0,-}_\eta (t) \,\,:= \,\, \left. \frac{\partial}{\partial \alpha} \right|_{\alpha = 0} Êv_{D,\eta + \alpha}(t) \quad & \hbox{and} & \quad \Psi^{0,+}_\eta (t) Ê\,\, := \,\, \left. \frac{\partial}{\partial \tau} \right|_{\tau = 0} Êv_{D,\eta, \tau} (t) \,\, = \,\, \dot{v}_{D,\eta} (t) \,\, .
\end{eqnarray}
In analogy with that, we use the other two families $b \mapsto v_{D,\eta, b}$ and $a \mapsto v_{D,\eta,a}$ to define, for $j=1,\ldots , n$, the Jacobi fields
\begin{eqnarray}
\label{psij-}
\Psi^{j,-}_{\eta} (t, \theta) & := & Ê\left. \frac{\partial}{\partial b^j} \right|_{b = 0} Êv_{D,\eta, b} (t, \theta) \,\, = \,\, \big[ \tfrac{n-2k}{2k} v_{D,\eta} (t) Ê\, + \, \dot{v}_{D,\eta} (t) \, \big] \, e^t \, \cdot \, \phi_j(\theta) \,\,, \quad Ê\\
\label{psij}
\Psi^{j,+}_{\eta} (t, \theta) & := & Ê\left. \frac{\partial}{\partial a^j} \right|_{a = 0} Êv_{D,\eta, a} (t, \theta) \,\, = \,\, \big[ \tfrac{n-2k}{2k} v_{D,\eta} (t) Ê\, - \, \dot{v}_{D,\eta} (t) \, \big] \, e^{-t} \,\cdot \, \phi_j (\theta) \,\,,
\end{eqnarray} 
where the $\phi_j$'s are the $n$ eigenfunction of the Laplacian on $S^{n-1}$ with eigenvalue $n-1$, namely $-\Delta_{\theta} \phi_j \, = \, (n-1) \, \phi_j$, for $j=1,\ldots,n$.

\subsection{A linear problem on the cylinder $\R \times \mathbb{S}^{n-1}$}

\label{linearcyl}

In this subsection we want to study the problem
\bea
\mathbb{L}(v_{D,\eta}, g_{cyl}) \, [w] & = & f \quad \hbox{in} \,\, \R \times \mathbb{S}^{n-1}\,\, .
\eea
Following \cite{mpu} we observe that the natural functional setting for this problem is given by weighted H\"older or Sobolev spaces. Both choices are essentially equivalent. However, in our argument we will use H\"older spaces. For a fixed weight parameter $\d \in \R$ and $m \in \mathbb N$ we define the space
\begin{eqnarray*}
C^m_\d (D_\eta) & := & \{ u \in C^m(D_\eta) \,\, : \,\, \nor {u}{C^{m}_\d} 
<  +\infty    \} \,\, ,
\end{eqnarray*}
where the weighted norm is defined by
\begin{eqnarray*}
\nor{u}{C^m_\d (D_\eta)} & := & \sup_{\,\, \R \times \mathbb{S}^{n-1}}   \hbox{$\sum_{j=1}^m$} (\cosh t)^{-\d} \,|\nabla^j u | \, (t, \theta) \,\, .
\end{eqnarray*}
We point out that $|\,\cdot\,|$ and $\nabla$ are respectively the norm and the Levi-Civita connection of the cylindrical metric $g_{cyl}$. In the same way we define for $\d \in \R$, $m \in \mathbb{N}$ and $\b \in (0,1)$ the weighted H\"older seminorm by
\begin{eqnarray}
\label{semiholder-weight}
[\,u\,]_{C^{m,\b}_\d (D_\eta)} & := & \sup_{ t \,\in \,\R }  \,\, (\cosh t)^{-\d}\, [\,u\,]_{C^{m,\b}(\,(t-1, t+1) \times \mathbb{S}^{n-1}\,)} \,\, .
\end{eqnarray}
The weighted H\"older spaces are then given by
\begin{eqnarray}
\label{holder-weight}
C^{m,\b}_\d (D_\eta) & := & \big\{ u \in C^{m,\b}(D_\eta) \,\, : \,\, \nor {u}{C^{m, \b}_\d}  \, := \, \nor {u}{C^{m}_\d} + \, [\,u\,]_{C^{m,\b}_\d } \,
<  +\infty   \, \big\} \,\, .
\end{eqnarray}
Following the analysis in \cite{pacard}, one immediately find that 
\bea
\mathbb{L}(v_{D, \eta}, g_{cyl}) \,\,: \,\, C^{2,\b}_\d (D_\eta) & \longrightarrow & C^{0,\b}_\d (D_\eta)
\eea
is Fredholm, provided $\d \notin I_\eta$, where $I_\eta := \{\pm\d_{j,\eta}  \, : \, j \in \mathbb{N} \, \}$ is the set of the indicial roots of the operator $\mathbb{L}(v_{D, \eta}, g_{cyl})$ at both $+\infty$ and $-\infty$. In general the indicial roots (for a precise definition see \cite{pacard}) do depend on the neck-size parameter $\eta$, but here it follows from the explicit knowledge of the Jacobi fields that $\d_{0,\eta}$ and $\d_{1,\eta}$ are independent of $\eta$. In particular the indicial root $\d_{0,\eta} = 0$ is related to the Jacobi fields $\Psi^{0,-}_\eta$ and $\Psi^{0, + }_{\eta}$, which are respectively linearly growing and bounded in $t$, whereas the indicial root $\d_{1,\eta} = 1$ has multiplicity $n$ and is related to the Jacobi fields $\Psi^{j,-}_\eta$ and $\Psi^{j ,+}_{\eta}$, $j=1, \ldots,n$, which are respectively exponentially growing with rate $e^t$ and exponentially decreasing with rate $e^{-t}$. We also point out that as a consequence of the inequality \eqref{coercivity} it 
can be deduced that for every admissible value of the Delaunay parameter $\eta$, the indicial root $\d_{2,\eta}$ verifies the inequality
\begin{eqnarray}
\label{2nd root}
\bar \d (n,k) \,\, \,:= \, \,\, \sqrt{\tfrac{2n(n-k)}{k(n-1)} + \big(\tfrac{n-2k}{2k}\big)^2} & \leq & \d_{2, \eta} \,\, .
\end{eqnarray}

We are now in the position to prove the following 
\begin{lem}
\label{inj.}
Let $1 < \d  $, then the operator
\bea
\mathbb{L}(v_{D, \eta}, g_{cyl}) \,\,: \,\, C^{2,\b}_{-\d} (D_\eta) & \longrightarrow & C^{0,\b}_{-\d} (D_\eta)
\eea
is injective.
\end{lem}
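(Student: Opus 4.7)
The plan is to show that any $w \in C^{2,\beta}_{-\delta}(D_\eta)$ with $\mathbb{L}(v_{D,\eta},g_{cyl})[w]=0$ must vanish identically, by reducing to an ODE analysis on the cylinder via spherical harmonic decomposition. Since $h$ is periodic and strictly positive, setting $\tilde w := h^{(k-1)/2}w$ preserves the weighted decay and, by the explicit expression \eqref{linear-del-expression} of the linearized operator, the equation $\mathbb{L}(v_{D,\eta},g_{cyl})[w]=0$ is equivalent to $\mathcal{L}_\eta \tilde w = 0$, where $\mathcal{L}_\eta = \partial_t^2 + a_\eta \Delta_\theta - p_\eta$. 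I would then expand $\tilde w(t,\theta) = \sum_{j \geq 0} \tilde w_j(t) \phi_j(\theta)$ in the orthonormal basis of spherical harmonics $\phi_j$ with eigenvalues $\lambda_j$, each coefficient $\tilde w_j$ inheriting the pointwise bound $|\tilde w_j(t)| \lesssim (\cosh t)^{-\delta}$ and satisfying the ODE
\begin{equation*}
\ddot{\tilde w}_j(t) \;=\; \bigl( a_\eta(t)\lambda_j + p_\eta(t)\bigr)\,\tilde w_j(t).
\end{equation*}

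The high-frequency regime $j \geq n+1$ is the easy part: by the coercivity estimate \eqref{coercivity}, the coefficient on the right-hand side is uniformly bounded below by a positive constant $c$. Multiplying the ODE by $\tilde w_j$, integrating over $\R$ and integrating by parts (the boundary terms vanish because $\tilde w_j$ and $\dot{\tilde w}_j$ decay like $(\cosh t)^{-\delta}$ with $\delta>1$), one obtains
\begin{equation*}
\int_\R \bigl(\,|\dot{\tilde w}_j|^2 + (a_\eta \lambda_j + p_\eta)\,\tilde w_j^2\bigr)\,dt \;=\; 0,
\end{equation*}
forcing $\tilde w_j \equiv 0$.

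For the low-frequency modes $j = 0,1,\ldots,n$, I would use the explicit Jacobi fields from Section \ref{jacobi fields} as a basis for the two-dimensional solution space of each reduced ODE, noting that conjugation by $h^{(k-1)/2}$ turns the kernel elements of $\mathbb{L}$ into kernel elements of $\mathcal{L}_\eta$. For $j=0$ the two independent solutions arise from $\Psi^{0,-}_\eta$ (linearly growing in $t$) and $\Psi^{0,+}_\eta = \dot v_{D,\eta}$ (bounded and periodic): neither tends to zero as $|t| \to \infty$, so no nontrivial linear combination can decay, and hence $\tilde w_0 \equiv 0$. For $j = 1,\ldots,n$, the Jacobi fields $\Psi^{j,-}_\eta$ and $\Psi^{j,+}_\eta$ from \eqref{psij-}--\eqref{psij} have asymptotic behavior $\sim e^t \phi_j$ and $\sim e^{-t}\phi_j$ at $+\infty$ respectively (and the opposite at $-\infty$); writing $\tilde w_j = c_+\psi^{j,+} + c_-\psi^{j,-}$ in the conjugated basis and imposing decay faster than $e^{-|t|}$ (which follows from $\delta > 1$ and the elementary bound $(\cosh t)^{-\delta} \leq C e^{-\delta |t|}$) at $+\infty$ forces $c_- = 0$, while the analogous requirement at $-\infty$ forces $c_+=0$. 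Thus $\tilde w_j \equiv 0$ for every $j \leq n$ as well.

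The only real obstacle is verifying that the Jacobi fields $\Psi^{j,-}_\eta, \Psi^{j,+}_\eta$ are linearly independent and therefore actually span the full solution space of each reduced ODE; this is where the conformal-geometric construction of Section \ref{jacobi fields} is essential, since the independence follows from the fact that the two fields exhibit genuinely distinct exponential rates at infinity (as they derive from translations and inverted translations in $\mathbb{R}^n$, respectively). Once this is in hand, summing over $j$ gives $\tilde w \equiv 0$, hence $w \equiv 0$, establishing injectivity.
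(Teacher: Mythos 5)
Your proof is correct and follows essentially the same strategy as the paper's: conjugate by $h^{(k-1)/2}$ to reduce to $\mathcal{L}_\eta$, decompose in spherical harmonics, rule out the low modes $j=0,\ldots,n$ by observing that the Jacobi fields $\Psi^{j,\pm}_\eta$ span the solution space of each reduced ODE and none of them lies in $C^{2,\beta}_{-\delta}$ for $\delta>1$, and then kill the high modes using the positivity of $a_\eta\lambda_j+p_\eta$. The only point of divergence is the high-frequency step: the paper invokes the maximum principle for $\mathcal{L}_\eta$ on the high frequencies (quoting \cite{mn}) and lets the comparison boundary $T\to+\infty$, whereas you multiply the mode equation by $\tilde w_j$ and integrate by parts, using the decay of $\tilde w_j$ and $\dot{\tilde w}_j$ to discard the boundary terms. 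Both arguments rest on the same coercivity estimate \eqref{coercivity}; your energy identity is self-contained and avoids the external reference, while the paper's maximum-principle argument has the minor advantage of not requiring control of the first derivative at infinity. Your remark that the linear independence of $\Psi^{j,+}_\eta$ and $\Psi^{j,-}_\eta$ must be checked (and follows from their distinct exponential rates, which in turn use $h>0$ so that $\tfrac{n-2k}{2k}v_{D,\eta}\pm\dot v_{D,\eta}>0$) fills in a step the paper leaves implicit.
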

\begin{proof}
Since the functions which are involved in the conjugation \eqref{conj} are bounded and positive, it is not restrictive to prove the result for the conjugate operator
\bea
\mathcal{L}_\eta \,\,: \,\, C^{2,\b}_{-\d} (D_\eta) & \longrightarrow & C^{0,\b}_{-\d} (D_\eta) \,\, .
\eea
Performing a standard separation of variables and projecting the equation along the eigenfunctions of $\Delta_\theta$, we note that for the low frequencies $j= 0, \ldots , n$ the space of the general solutions to the homogeneous equation is spanned by the (conjugate) Jacobi fields $\Phi^{j,\pm}_\eta \, := \, h^{(k-1)/2} \Psi^{j,\pm}_\eta$. On the other hand, it is easy to check that, for $\d>1$,  no one of these functions belongs to the weighted space $ C^{2,\b}_{-\d} (D_\eta)$.
Thus, it is sufficient to test the injectivity only for the high frequencies, $j \geq n+1$. Hence, suppose to have a function $\Phi$ such that
$$
\mathcal{L}_\eta \, \Phi \,\,\, = \,\,\, 0  \quad \quad \hbox{and} \quad \quad \Phi \, (t, \theta) \,\, = \,\, \hbox{$\sum_{j \geq n+1}$} \,\,\Phi^j(t) \, \phi_j\,(\theta)\,\, .
$$
Since $\Phi \in  C^{2,\b}_{-\d} (D_\eta)$, we have that
$$
|\Phi|\, (t ,\theta) \,\, \leq \,\, C \cdot (\cosh t)^{-\d}
$$
for some fixed $C>0$. On the other hand, the maximum principle (which holds when $\mathcal{L}_\eta$ acts on the high frequencies, see \cite{mn}) gives 
$$
|\Phi|\, (T ,\theta) \,\, \leq \,\, C \cdot (\cosh T)^{-\d}
$$
for every $T \in \R$. Letting $T \rightarrow + \infty$ we deduce that $\Phi\equiv 0$ and the proof is complete. 
\end{proof}
Using the fact that $\mathcal{L}_\eta$ is formally selfadjoint, it is standard to deduce (see \cite{mpu}) that 
\bea
\mathbb{L}(v_{D, \eta}, g_{cyl}) \,\,: \,\, C^{2,\b}_\d (D_\eta) & \longrightarrow & C^{0,\b}_\d (D_\eta)
\eea
is surjective for $\d >1$, $\d \notin I_\eta$. Following \cite{mpu} we are going to improve these first issue by showing that the surjectivity can be obtained on a smaller space. To do that it is convenient to set
\bea
W(D_{\eta,R}) & := &  {\rm span} \, \{ \, \chi_R \,\Psi^{j, \pm}_\eta  \,\, : \,\, j = 0, \ldots, n \, \} \,\, ,
\eea 
where $\chi_R$ is a non decreasing smooth cut-off function which is identically equal to $1$ for $t \geq R$ and which vanish for $t \leq R-1$. A simple adaptation of the ODE argument used in \cite[Proposition 2.7]{mpu}, gives us the following
\begin{lem}
Let $1 < \d < \bar\d (n,k) $, then the operator
\bea
\mathbb{L}(v_{D, \eta}, g_{cyl}) \,\,: \,\, C^{2,\b}_{-\d}  (D_\eta) \, \oplus \, W(D_{\eta,R}) & \longrightarrow & C^{0,\b}_{-\d} (D_\eta)
\eea
is surjective.
\end{lem}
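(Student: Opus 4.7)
The strategy, closely modeled on Proposition~2.7 of \cite{mpu}, is to combine the surjectivity already established on the positive-weight space with an asymptotic, indicial-root analysis that extracts the finite-dimensional Jacobi-field contribution. Note first that by the bound \eqref{2nd root} the open interval $(1, \bar\delta(n,k))$ contains no indicial root of $\mathbb{L}(v_{D,\eta}, g_{cyl})$, so the already-noted surjectivity on the positive-weight space applies with the exponent $\delta$ itself. Given $f \in C^{0,\beta}_{-\delta}(D_\eta)$, the inclusion $C^{0,\beta}_{-\delta} \hookrightarrow C^{0,\beta}_{\delta}$ places $f$ in the positive-weight space, whence there exists $w \in C^{2,\beta}_\delta(D_\eta)$ with $\mathbb{L}(v_{D,\eta}, g_{cyl})[w] = f$. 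The task reduces to showing that $w$ can be split as $w_\star + w_W$ with $w_\star \in C^{2,\beta}_{-\delta}$ and $w_W \in W(D_{\eta,R})$.

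To analyze $w$, decompose into spherical harmonics $w(t,\theta) = \sum_j w_j(t)\phi_j(\theta)$ (and similarly for $f$), and pass to the conjugate unknowns $W_j := h^{(k-1)/2} w_j$ via the factorization \eqref{conj}; each mode then solves the scalar ODE $\ddot W_j - (a_\eta \lambda_j + p_\eta) W_j = \widetilde F_j$ with $\widetilde F_j$ decaying at rate $(\cosh t)^{-\delta}$. For the \emph{high} frequencies $j \geq n+1$, the coercivity \eqref{coercivity} together with the indicial-root bound \eqref{2nd root}, combined with the barrier/maximum-principle argument used in the proof of Lemma \ref{inj.}, forces each $W_j$ to decay at least as fast as $(\cosh t)^{-\bar\delta(n,k)}$, hence as fast as $(\cosh t)^{-\delta}$. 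The full high-frequency tail therefore already lies in $C^{2,\beta}_{-\delta}$.

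For the \emph{low} frequencies $j = 0, 1, \ldots, n$, the two-dimensional kernel of the homogeneous ODE is spanned by the conjugate Jacobi fields $\Phi^{j,\pm}_\eta := h^{(k-1)/2}\Psi^{j,\pm}_\eta$, with asymptotic rates $e^{\pm\delta_{j,\eta}t}$ involving only the indicial exponents $\delta_{0,\eta} = 0$ and $\delta_{j,\eta} = 1$ for $j = 1, \ldots, n$, all strictly smaller than both $\delta$ and $\bar\delta(n,k)$. Variation of parameters, with Wronskian uniformly bounded away from zero and with a forcing that decays faster than either homogeneous solution grows, then produces unique constants $c_j^\pm$ such that $W_j - c_j^+ \Phi^{j,+}_\eta - c_j^- \Phi^{j,-}_\eta$ has pointwise $(\cosh t)^{-\delta}$ decay; the H\"older-weighted version follows from standard interior Schauder estimates applied on unit-length translates of the cylinder.

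To finish, note that $(1-\chi_R)\Psi^{j,\pm}_\eta$ is smooth and compactly supported, hence lies in $C^{2,\beta}_{-\delta}$, so we may freely replace each bare Jacobi field $\Psi^{j,\pm}_\eta$ by its cutoff $\chi_R \Psi^{j,\pm}_\eta$ and absorb the compactly supported difference into the decaying remainder. This yields the desired decomposition $w = w_\star + \sum_{j=0}^{n}\big(c_j^+ \chi_R \Psi^{j,+}_\eta + c_j^- \chi_R \Psi^{j,-}_\eta\big)$ with $w_\star \in C^{2,\beta}_{-\delta}$. I expect the main difficulty to be the variation-of-parameters step for the low-frequency modes, where one must check that the Wronskian integrals against the exponentially small forcing truly land in the smaller weighted space; this is precisely where the hypothesis $\delta < \bar\delta(n,k)$ is indispensable, since it rules out any intermediate indicial root between $1$ and $\bar\delta$ that would otherwise contribute unaccounted growing terms not captured by $W(D_{\eta,R})$.
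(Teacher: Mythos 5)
Your overall strategy is the right one (it is exactly the MPU Proposition~2.7 scheme the paper invokes: solve in the large space $C^{2,\beta}_{\delta}$, then peel off the low-frequency Jacobi-field asymptotics), but the final step contains a genuine error. The function $(1-\chi_R)\Psi^{j,\pm}_\eta$ is \emph{not} compactly supported: $\chi_R$ equals $1$ for $t\geq R$ and $0$ for $t\leq R-1$, so $1-\chi_R$ is identically $1$ on the entire half-cylinder $t\leq R-1$, where $\Psi^{j,+}_\eta\sim e^{-t}$ grows exponentially as $t\to-\infty$, $\Psi^{0,-}_\eta$ grows linearly, and $\Psi^{0,+}_\eta$ is merely bounded; none of these lie in $C^{2,\beta}_{-\delta}$ for $\delta>1$. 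Consequently you cannot "freely replace" the global Jacobi fields by their cutoffs. The same difficulty invalidates the preceding variation-of-parameters claim: for each low mode $j$ the solution $W_j$ carries \emph{four} asymptotic Jacobi-field coefficients (a pair at $+\infty$ and an independent pair at $-\infty$, linked only through the Wronskian integrals $\int\Phi^{j,\mp}_\eta\widetilde F_j$, which do not vanish in general), so subtracting a single global combination $c_j^+\Phi^{j,+}_\eta+c_j^-\Phi^{j,-}_\eta$ cannot make the remainder decay at both ends.

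The correct completion uses the two errors' "missing" degrees of freedom in tandem. The kernel of $\mathbb{L}(v_{D,\eta},g_{cyl})$ on $C^{2,\beta}_{\delta}(D_\eta)$ is spanned by the $2(n+1)$ global Jacobi fields $\Psi^{j,\pm}_\eta$. Given your solution $w\in C^{2,\beta}_{\delta}$, first subtract the element of this kernel that matches the Jacobi-field asymptotics of $w$ at $-\infty$; the result is still a solution and now lies in $C^{2,\beta}_{-\delta}$ near $-\infty$. Its residual non-decaying behaviour is then confined to $+\infty$, where it coincides with a fixed linear combination of the $\Psi^{j,\pm}_\eta$ up to an $O(e^{-\delta t})$ error, and \emph{that} is precisely what the one-sided deficiency space $W(D_{\eta,R})=\mathrm{span}\{\chi_R\Psi^{j,\pm}_\eta\}$ is designed to absorb. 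This interplay -- kernel elements normalize one end, the cutoff deficiency space captures the other -- is the reason the deficiency space has dimension $2(n+1)$, equal to that of the cokernel on $C^{2,\beta}_{-\delta}$, and is the content of the ODE argument of \cite{mpu} that the paper cites in lieu of a proof.
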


\subsection{A linear Dirichlet problem on the half cylinder $\R^+ \times \mathbb{S}^{n-1}$}
In this subsection we study the Dirichlet problem 
\begin{eqnarray}
\label{DIRICHLET}
 \left\{
  \begin{split}
\mathbb{L}(v_{D, \eta}, g_{cyl})  \, [w  ] \,\, = \,\, f\,\, & \quad \,\, \text{in}
\quad (R \,, +\infty) \times \mathbb{S}^{n-1}\,\,,
\\
w \,\,  = \,\, 0 \,\,  & \quad \,\,\text{on}  \quad \{ R\} \times \mathbb{S}^{n-1}
\,\, ,
  \end{split}
\right.
\end{eqnarray}
for which will prove a well posedness result 
 in the next Proposition \ref{APER}. As it will be apparent from the proof, this result heavily relies on a proper choice of the value of $R$. Loosely speaking, the correct choice of $R$ has to compensate the lack of maximum principle for the linear operator $\mathbb{L}(v_{D, \eta}, g_{cyl})$. The same kind of problem will show up also in the next subsection \ref{sezione-cilindro-finito}.
For future convenience we set $D_{\eta,R} := (R \,, +\infty) \times \mathbb{S}^{n-1}$. Again, since $h$ and $v_{D, \eta}$ are bounded and periodic we study without loss of generality the 
conjugate problem
\begin{eqnarray}
 \left\{
  \begin{split}
\mathcal{L}_\eta  \, z \,\, = \,\, y\,\, & \quad \,\, \text{in}
\quad D_{\eta,R}\,\,,
\\
z \,\,  = \,\, 0 \,\,  & \quad \,\,\text{on}  \quad \partial D_{\eta,R}\,\, ,
\label{linear dirichlet}
\end{split}
\right.
\end{eqnarray}
where $z = h^{(k-1)/2} w$ and $y=-C_{n,k}^{-1} v_{D,\eta}^{-1} h^{-(k-1)/2} f$. We consider now the usual eigenfunction decomposition
\begin{eqnarray*}
y \,(t,\theta) \,\,\, = \,\,\, \sum_{j=0}^{\infty} \, y^j(t)
\,\, \phi_j  (\theta) \quad\quad & \text{and} & \quad \quad z \,
(t,\theta) \,\,\, = \,\,\, \sum_{j=1}^{\infty} \, z^j (t) \,\,
\phi_j  (\theta) \,\, ,
\end{eqnarray*}
where the $\phi_j$'s indicate the eigenfunctions of the
Laplace-Beltrami operator on $(S^{n-1} ,  g_{S^{n-1}})$ which
satisfy the identities $-\Delta_\theta \, \phi_j = 
\lambda_j \,\, \phi_j\,$, with $j\in \mathbb{N}$. We also recall
that the spectrum of $\Delta_\theta$ is given by $\{\, m \, (n-2+m)
\,\,\, : \,\, \, m \in \mathbb{N} \, \}$ and that in particular the first nonzero
eigenvalue is $n-1$, with multiplicity $n$.

\medskip

\noindent In the spirit of \cite{mp}, it is convenient to treat separately the
high frequencies, i.e.,  $j \, \geq \, n+1$, and the low
frequencies, namely $j\,=\, 0, \ldots, n$. Basically, this
distinction is motivated by the fact that, depending on the size of
the $\lambda_j$, the quantity
$ \, a_{\eta}\,\, \lambda_j \, + \, p_{\eta}  $ presents a change of
sign and this has a clear influence on the analytical properties of
our operators.

\medskip

\noindent \textbf{High frequencies: $j \, \geq \, n+1$.} We consider
the projection of $z$ and $y$ along the high frequencies
\begin{eqnarray*}
\bar{y}\, (t,\theta) \,\,\,\, := \,\,\,\, \sum_{j =
n+1}^{\infty}\, y^j(t)\,\phi_j(\theta) \quad \quad & \hbox{and} &
\quad \quad \bar{z}\, (t,\theta) \,\,\,\, := \,\,\,\, \sum_{j=
n+1}^{\infty}\, z^j(t)\,\phi_j(\theta) \,\, ,
\end{eqnarray*}
and for $T> R$ we consider the projected and truncated linear Dirichlet problem
\begin{eqnarray}
\label{linear problem DT}
 \left\{
  \begin{split}
\mathcal{L}_{\eta} \, \bar{z}  \,\, = \,\, \bar{y} \,\, & \quad
\,\, \text{in} \quad D^T_{\eta,R}\,\,,
\\
\bar{z} \,\,  = \,\, 0 \,\,  & \quad \,\,\text{on}  \quad \partial
D^T_{\eta,R} \,\, ,
  \end{split}
\right.
\end{eqnarray}
where $D^T_{\eta,R} := (R, T) \times \mathbb{S}^{n-1}$. 
In the high frequencies regime the linear problem \eqref{linear problem DT} has a clear variational structure. Indeed it is easy to see
that critical points of the Euler-Lagrange functional
\begin{eqnarray}
\label{energia-alte-frequenze}
E_T \, (\bar z) & := & \int_{ R}^T \int_{\mathbb{S}^{n-1}}\,\left( |\,\partial
_t\bar{z}\,|^2 \,\, + \,\,
a_{\eta} \, |\, \nabla_{\t} \, \bar{z} \, |_\theta^2 \,\, + \,\,
p_{\eta} \, \bar{z}^2 \,\, + \,\, 2 \, \bar{y} \, \bar{z} \,\,\,\,
\right)dt \, d\theta
\end{eqnarray}
are weak solutions of $\eqref{linear problem DT}$ (here $d\theta$ represent the volume element of the round metric $g_{S^{n-1}}$ on the $(n-1)$-dimensional sphere).
On the other hand, since $j\geq n+1$, we have by  \cite[Lemma 5.3]{mn} that
$\, a_{\eta}\, \lambda_j \, + \, p_{\eta} \, > \, 0$ in $D_{\eta,R}$. This
implies that the functional $E_T$ is coercive on 
$$
\big[\,H^1_0 \,
(D_{\eta,R}^T) \, \big]^\perp \, := \, \left\{ \, u \, \in \, H^1_0(D_{\eta,R}^T)\,\,\,  \left| \,\,\, \int_{\mathbb{S}^{n-1}} u( \, \cdot \, ,\theta) \,\, \phi_j(\theta) \,\,\, d\theta \,\ = \,\, 0 \,\, ,\quad \,\, j=0, \ldots, n  \, \right.\right\}\,\,,
$$ 
hence it is bounded
from below. Furthermore it is easy to check that the functional
$E_T$ is weakly lower-semicontinuous on $\big[\,H^1_0\, (D_{\eta,R}^T)\,
\big]^\perp$. Thus, using the direct method of calculus of
variations, we infer the existence of a minimizer $\bar z_T$ of $E_T$, which provides a (weak) solution of $\eqref{linear problem
DT}$. The standard elliptic theory yields the expected regularity
issues for $\bar{z}_T$ in terms of the regularity of $\bar{y}$.

\medskip
Moreover, as a particular case of \cite[Proposition 6.4]{mn}, in the high frequencies regime, there holds the following
\begin{lem}
\label{APET}
Let $|\,\d \,| \, < \, \bar \d (n,k)
$, then there exists a positive constant $C = C(n, k, \d)>0$ such that if $\bar z_T \in C^{2,\b}_{-\d} (D_{\eta,R}^T)$ and $y \in C^{0, \b}_{-\d} (D_{\eta,R}^T)$ verify \eqref{linear problem DT}, then we have
\be
\label{apeT}
\nor{\,\bar z_T}{C^{2,\b}_{-\d} (D_{\eta,R}^T)} & \leq & C \,\,\, \nor{\,\bar y\,}{C^{0,\b}_{-\d} (D_{\eta,R}^T)} \,\, ,\quad \quad 
\ee
for every $T>R$.
\end{lem}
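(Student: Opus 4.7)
The plan is to pass to the conjugate operator $\mathcal{L}_\eta$ (the conjugation factors $v_{D,\eta}$ and $h^{(k-1)/2}$ are smooth, strictly positive and uniformly bounded above and below since $v_{D,\eta}$ is periodic) and prove the estimate for the corresponding Dirichlet problem. On the high-frequency sector $j\geq n+1$ the bound \eqref{coercivity} yields $a_\eta\lambda_j+p_\eta\geq c>0$, which provides a weak maximum principle for $\mathcal{L}_\eta$ restricted to that sector and guarantees that the interior and boundary Schauder estimates for the projected problem \eqref{linear problem DT} are uniform in $T$ and $R$ (the coefficients being periodic, the local geometry is the same in every unit cylindrical piece). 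Multiplying these local Schauder inequalities by $(\cosh t)^{-\delta}$ and summing, \eqref{apeT} reduces to the weighted $C^0$ bound
\[
\sup_{D^T_{\eta,R}} (\cosh t)^{-\delta}\,|\bar z_T|(t,\theta)\,\,\leq\,\, C\,\sup_{D^T_{\eta,R}} (\cosh t)^{-\delta}\,|\bar y|(t,\theta).
\]

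I would then establish this $C^0$ bound by a blow-up contradiction argument. Assuming no such $C$ exists, pick sequences $T_i>R$, $\bar z_i$, $\bar y_i$ solving \eqref{linear problem DT} with $\|\bar z_i\|_{C^0_{-\delta}}=1$ and $\|\bar y_i\|_{C^0_{-\delta}}\to 0$, together with points $(t_i,\theta_i)\in D^{T_i}_{\eta,R}$ where $(\cosh t_i)^{-\delta}\,|\bar z_i(t_i,\theta_i)|\geq 1/2$. Set $\tilde z_i(s,\theta):=(\cosh t_i)^{\delta}\,\bar z_i(s+t_i,\theta)$. By periodicity of $a_\eta$ and $p_\eta$, after shifting each $t_i$ by an integer multiple of the period $T_\eta$ the translated operator is unchanged; extracting a subsequence we may therefore pass to a locally $C^{2,\beta}$ limit $\tilde z$. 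This limit lives in the high-frequency subspace, is nontrivial (indeed $|\tilde z(0,\theta_\infty)|\geq 1/2$), solves $\mathcal{L}_\eta\tilde z=0$ on either the full cylinder or on a half cylinder (with zero Dirichlet data at the finite end), and is controlled pointwise by $C(\cosh s)^\delta$ if $\{t_i\}$ remains bounded, or by $Ce^{\pm\delta s}$ if $t_i\to\pm\infty$.

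The final step rules out such a $\tilde z$ via indicial root considerations. Separating variables $\tilde z=\sum_{j\geq n+1}\tilde z^j(t)\,\phi_j(\theta)$ reduces the problem to the scalar Hill equation $\ddot{\tilde z}^j-(a_\eta\lambda_j+p_\eta)\tilde z^j=0$ with positive periodic coefficient. Floquet theory then provides fundamental solutions that grow and decay with characteristic exponents $\pm\delta_{j,\eta}$, and comparison with \eqref{2nd root}, combined with the monotonicity $\delta_{j,\eta}\geq\delta_{2,\eta}$ for $j\geq n+1$, gives $\delta_{j,\eta}\geq\bar\delta(n,k)$. Since $|\delta|<\bar\delta(n,k)$, the weighted bound on $\tilde z$ is strictly subcritical with respect to every admissible decay rate of the homogeneous equation, forcing $\tilde z^j\equiv 0$ for all $j$, which contradicts $|\tilde z(0,\theta_\infty)|\geq 1/2$. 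The principal difficulty will be this concluding ODE step: rigorously verifying via Floquet/Wronskian arguments that a high-frequency solution with zero boundary data and growing no faster than $e^{\delta|s|}$, $|\delta|<\bar\delta(n,k)$, must vanish identically — exactly the content extracted from \cite[Proposition~6.4]{mn} specialised to the high-frequency sector.
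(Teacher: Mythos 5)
Your strategy is sound, and it is essentially the strategy of the paper: the authors do not prove Lemma \ref{APET} at all (they import it as "a particular case of \cite[Proposition 6.4]{mn}"), but the blow-up scheme you propose is exactly the one they carry out in detail for the companion low-frequency estimate, Lemma \ref{APET0} — normalize, recentre at a near-maximum point of the weighted norm, extract a limit solving the homogeneous equation on a limiting (possibly degenerate, half-infinite or bi-infinite) cylinder with the inherited exponential bound, and kill the limit by ODE analysis. Two remarks on where your sketch leans hardest. First, you should not skip the degenerate case $\beta^+=\beta^-$ (shrinking limit interval), which the paper treats explicitly in Lemma \ref{APET0}; for the two-sided Dirichlet problem it is disposed of quickly because $a_\eta\lambda_j+p_\eta\geq c>0$ gives an unweighted maximum principle on arbitrarily small domains. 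Second, the real content is your final ODE step, and there the assertion $\d_{j,\eta}\geq\d_{2,\eta}$ for all $j\geq n+1$ is not a formality: Floquet exponents of Hill equations are not obviously monotone in the potential, and this is precisely the input that the paper (and you) must take from \cite[Lemma 5.3 and Proposition 6.4]{mn} together with \eqref{2nd root}. Note that when $\d\geq 0$ the Floquet machinery is unnecessary: the limit $\tilde z^j$ is then bounded, vanishes at the finite end(s), and satisfies $\ddot{\tilde z}^j=(a_\eta\lambda_j+p_\eta)\tilde z^j$ with strictly positive coefficient, so convexity on $\{\tilde z^j>0\}$ forces $\tilde z^j\equiv 0$ directly; the indicial-root comparison is only needed to rule out the growing mode when $\d<0$. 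With those two points made explicit, your argument is a complete substitute for the citation.
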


%
Using the fact that the estimate is independent of $T$, it is easy to obtain a solution $\bar z$ to \eqref{linear dirichlet} by letting $T \rightarrow +\infty$. Moreover it is clear that $\bar z$ verifies the estimate
\be
\label{ape}
\nor{\,\bar z}{C^{2,\b}_{-\d} (D_{\eta,R})} & \leq & C \,\,\, \nor{\,\bar y\,}{C^{0,\b}_{-\d} (D_{\eta,R})} \,\, ,\quad \quad 
\ee
with the same constant $C$ as in Proposition \ref{APET}.

\medskip

\noindent{\bf Low frequencies:} $j=0,\ldots,n$. Here we start by
considering the projection of our original problem (\ref{linear
dirichlet}) along the eigenfunction $\phi_0$, obtaining
\begin{eqnarray}
\label{ODE 0}
 \left\{
  \begin{split}
\,\,\mathcal{L}^{\,\,0}_{\eta} \,\, z^{0}  \,\, = \,\, y^{0} \,\, &
\quad \,\, \text{in} \quad (\, R \, , \, + \infty \,)\,\,,
\\
z^{0} ( R) \, = \,\, 0 \,\,.&
  \end{split}
\right.
\end{eqnarray}
where $\mathcal{L}^0_\eta \, := \, \partial_t^2 \, - \, p_\eta$. As it is evident, in this case the potential has a wrong sign, thus we are forced to use a different
approach in order to provide existence. We suppose that the right
hand side is at least continuous and we extend it to the whole $\R$
(with a small abuse of notations, we still denote this extension by
$y^{0}$). Next, following \cite{mp}, we consider, for any $T>R$, the auxiliary backward Cauchy problem
\begin{eqnarray}
\label{Backward Cauchy 0}
 \left\{
  \begin{split}
\mathcal{L}^{\,\,0}_{\eta} \,\, z  \,\, = \,\, y^{\,0} \,\, & \quad
\,\, \text{in} \quad (\,- \infty \, , \, T \,)\,\,,
\\
z \,(T) \,\, = \,\, 0 \,\,,
\\
\dot{z} \,(T) \,\, = \,\, 0 \,\,.
  \end{split}
\right.
\end{eqnarray}
Using the Cauchy-Lipschitz Theorem, we infer the existence of a
unique solution $z_T^0$ to (\ref{Backward Cauchy 0}). As we are
going to show, the weighted norms of these solutions admit a bound
which is uniform in $T$. This will allow us to produce
a solution to the problem (\ref{ODE 0}) with the wrong boundary
data, just by taking the limit of the $z_T^0$'s for $T \rightarrow
+\infty$. As a final step we will correct these boundary data by
adding a suitable multiple of the (conjugated) Jacobi field $\Phi_{\eta}^{0,+} := \, h_{}^{{(k-1)}/{2}}  \,\, \Psi^{0,+}_\eta  =  \, h_{}^{{(k-1)}/{2}}  \,\, \dot{v}_{D,\eta} $,
which lies by definition in the kernel of $\mathcal{L}_{\eta}^{\,
0}$. 
\begin{lem}
\label{APET0}
Let $0 < \d $ and $R\,=\,\tilde{m}\,T_\eta +\tilde{r}$ with $\tilde{m}\,\in\mathbb{N}$ and $\tilde{r}\,\in \,\mathbb{R}$ sufficiently small. Then, there exists a positive constant $C = C(n, k, \d)>0$ such that if $ z^0_T \in C^{2,\b}_{-\d} (R,T)$ and $y^0 \in C^{0, \b}_{-\d} (R,T)$ verify \eqref{Backward Cauchy 0}, then we have
\be
\label{apeT0}
\nor{\, z^0_T}{C^{2,\b}_{-\d} (R,T)} & \leq & C \,\,\, \nor{\, y^0\,}{C^{0,\b}_{-\d} ((R,T)} \,\, ,\quad \quad 
\ee
for every $T>R$.
\end{lem}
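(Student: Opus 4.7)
The plan is to write $z^0_T$ explicitly via variation of parameters with the two (conjugate) Jacobi fields $\Phi^{0,+}_\eta := h^{(k-1)/2}\,\Psi^{0,+}_\eta$ and $\Phi^{0,-}_\eta := h^{(k-1)/2}\,\Psi^{0,-}_\eta$ introduced in Subsection~\ref{jacobi fields}, which span the two-dimensional kernel of $\mathcal{L}^0_\eta=\partial_t^2-p_\eta$. Since $p_\eta$ is $T_\eta$-periodic and the indicial root $\delta_{0,\eta}=0$ has algebraic multiplicity two, Floquet theory ensures that $\Phi^{0,+}_\eta$ is itself $T_\eta$-periodic, whereas $\Phi^{0,-}_\eta$ admits the decomposition
$$
\Phi^{0,-}_\eta(t)\,=\,c_\eta\,t\,\Phi^{0,+}_\eta(t)\,+\,\Psi^{\mathrm{per}}_\eta(t),
$$
with $c_\eta$ a constant and $\Psi^{\mathrm{per}}_\eta$ smooth and $T_\eta$-periodic. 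Denoting by $W_\eta$ the (constant) Wronskian of the pair, the backward variation-of-parameters formula then yields
$$
z^0_T(t)\,=\,\int_t^T G_\eta(t,s)\,y^0(s)\,ds,\qquad G_\eta(t,s)\,:=\,W_\eta^{-1}\bigl[\Phi^{0,-}_\eta(t)\Phi^{0,+}_\eta(s)-\Phi^{0,+}_\eta(t)\Phi^{0,-}_\eta(s)\bigr].
$$

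The crucial observation is the cancellation produced by the Floquet structure: substituting the decomposition above, the two contributions $c_\eta t\,\Phi^{0,+}_\eta(t)\Phi^{0,+}_\eta(s)$ and $c_\eta s\,\Phi^{0,+}_\eta(t)\Phi^{0,+}_\eta(s)$ collapse into $c_\eta(t-s)\Phi^{0,+}_\eta(t)\Phi^{0,+}_\eta(s)$, so that
$$
|G_\eta(t,s)|\,\leq\,C\bigl(|s-t|+1\bigr)
$$
uniformly in $t,s$. Inserting the pointwise bound $|y^0(s)|\leq \|y^0\|_{C^0_{-\delta}(R,T)}(\cosh s)^{-\delta}$ in the representation formula gives
$$
|z^0_T(t)|\,\leq\,C\,\|y^0\|_{C^0_{-\delta}(R,T)}\int_t^T\bigl(|s-t|+1\bigr)(\cosh s)^{-\delta}\,ds,
$$
and for $t\geq 0$ an elementary calculus computation shows that the last integral is bounded by $C_\delta(\cosh t)^{-\delta}$, precisely because $\delta>0$.

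The hypothesis $R=\tilde m T_\eta+\tilde r$ enters to convert this into a uniform bound on the whole $[R,T]$. By the $T_\eta$-translation invariance of $\mathcal{L}^0_\eta$ we may translate the problem by $-\tilde m T_\eta$ and reduce to the case $R=\tilde r$, producing a constant $C$ that depends only on $n,k,\delta$ and in particular is independent of the (possibly large) integer $\tilde m$. Taking $\tilde r$ sufficiently small places $R$ in a small neighborhood of the node $t=0$ of $\Phi^{0,+}_\eta$ (where $\dot v_{D,\eta}=0$ by the initial conditions of Proposition~\ref{collecting-Del}), ensuring that $(\cosh t)^{-\delta}$ is bounded below on $[R,0]$ and the contribution from slightly negative values of $t$ is harmless. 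The upgrade from the resulting $C^0_{-\delta}$ estimate to the full $C^{2,\beta}_{-\delta}$ bound \eqref{apeT0} then follows by standard interior Schauder estimates on shifted unit intervals $(t-1,t+1)$, where the coefficients of $\mathcal{L}^0_\eta$ are smooth and uniformly bounded. The main obstacle is the cancellation step above: the naive pointwise bound $|\Phi^{0,-}_\eta(t)|\lesssim|t|+1$ destroys all decay, and no maximum principle is available on the zero mode because of the wrong sign of $-p_\eta$; the Floquet structure of the Jacobi fields is the only mechanism capable of recovering the decay of $z^0_T$.
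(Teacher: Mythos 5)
Your proof is correct, and it takes a genuinely different route from the paper's. The paper establishes \eqref{apeT0} by contradiction: it normalizes a putative sequence of counterexamples at the point where the weighted norm is attained, translates, extracts a limit solving the homogeneous equation $\mathcal{L}^{0}_{\eta}z_{\infty}=0$ on a limit interval, and then kills $z_{\infty}=A\,\Phi^{0,-}_{\eta}+B\,\Phi^{0,+}_{\eta}$ either by the zero Cauchy data (when $\beta^{+}<+\infty$) or by the inherited decay $|z_{\infty}|\le e^{-\d t}$ (when $\beta^{+}=+\infty$). You instead solve the backward Cauchy problem \eqref{Backward Cauchy 0} explicitly by variation of parameters and estimate the Green's kernel. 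Both arguments ultimately rest on the same structural fact — the kernel of $\mathcal{L}^{0}_{\eta}$ is spanned by a bounded periodic solution and a linearly growing one, so no nontrivial kernel element decays — but you exploit it quantitatively, via the Floquet decomposition $\Phi^{0,-}_{\eta}=c_{\eta}t\,\Phi^{0,+}_{\eta}+\Psi^{\mathrm{per}}_{\eta}$ and the resulting bound $|G_{\eta}(t,s)|\le C(|s-t|+1)$, whereas the paper uses it only qualitatively in the limit. Your approach is constructive, gives an explicit constant, and makes transparent why only $\d>0$ is needed on the zero mode; the paper's compactness argument is softer and carries over verbatim to the modes $j=1,\dots,n$ and to situations where the fundamental system is not explicit. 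Two minor remarks. First, your constant depends on $c_{\eta}$, $W_{\eta}^{-1}$ and $\sup(|\Phi^{0,+}_{\eta}|+|\Psi^{\mathrm{per}}_{\eta}|)$, hence a priori on $\eta$; this is harmless here (the Delaunay parameters are fixed data, and the paper's own Proposition \ref{APER} records the constant as $C(n,k,\d,\eta)$), but it means the clean $C(n,k,\d)$ of the statement is not literally what either proof delivers. Second, the hypothesis $R=\tilde m T_{\eta}+\tilde r$ plays no essential role in the a priori estimate itself — it is needed afterwards to ensure $\Phi^{0,+}_{\eta}(R)\neq 0$ when the boundary datum is corrected in \eqref{def z^0} — but your translation-by-$\tilde m T_{\eta}$ reduction is legitimate, since on the half-cylinder the weight $\cosh t$ may be replaced by $e^{t}$ and the translation then rescales both sides of the estimate by the same factor.
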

\begin{proof} We only prove the weighted $C^0$-estimate, since the weighted $C^{2,\b}$-estimate will follow by standard scaling arguments. We want to establish the $T$-uniform bound
\bea
\nor{\, z^0_T}{C^{0}_{-\d} (R,T)} & \leq & C \,\,\, \nor{\, y^0\,}{C^{0}_{-\d} ((R,T)} \,\, .\quad \quad 
\eea
We argue by contradiction. If the statement does not
hold, then it is
possible to find a sequence of triples $(T_i,
{z}^{0}_{T_i}, {y}^{0}_i)$ such that
\begin{itemize}
\item \quad$\mathcal{L}^{0}_{\eta} \, {z}^{0}_{T_i}  =  {y^{\,0}_i}$ \,\,
in $(R , T_i)$  \quad \quad and \quad\quad ${z}^{0}_{T_i}
(T_i) \,= \,0 \, = \, \dot{z}^{ 0}_{T_i} (T_i)$ \quad \quad  for
every $i \in \mathbb{N}$\,\,,
\item $\quad \nor{\,{z}^{0}_{T_i} \,}{\mathcal{C}^{\,0}_{-\d} ( R , T_i )} \, = \,
1$ \quad  for every $i \in \mathbb{N}$\,\,,
\item \quad $\nor{\,y^{0}_i\,}{\mathcal{C}^{\,0}_{-\d} ( R , T_i)}
\longrightarrow \,\, 0$ \quad  as  $i \rightarrow +\infty$\,\,.
\end{itemize}
From the second point we infer the existence of a point $t_i \in
( R , T_i)$ such that
\begin{eqnarray*}
\begin{split}
\sup_{t \in ( R , T_i)} \,\,e^{\d t}\,\, |\, z^{\,
0}_{T_i}|\,(t) \,\,\,& = & e^{\d t_i}\,\, |\, z^{\,
0}_{T_i}|\,(t_i)  \,\,\,& = & 1   \,\,.
\end{split}
\end{eqnarray*}
In fact on the half cylinder the weighting function $\cosh t $ can be replaced by $e^t$ in the definition of the weighted norms. This yields an equivalent norm and simplify the computations of this subsection. It is now convenient to set, for every $i \in \mathbb{N}$,
\begin{eqnarray*}
z_i \, (t) \,\,\, := \,\,\, e^{\d t_i} \, {z}^{\, 0}_{T_i}\,
(t+t_i) \quad & \hbox{and} & \quad y_i \, (t) \,\,\, := \,\,\,
e^{\d t_i} \, {y^{\, 0}_i} \, (t+t_i) \,\, ,
\end{eqnarray*}
where the point $t$ varies now in $ ( \, -\log R-t_i \, , \,
T_i-t_i \,)$, for every $i \in \mathbb{N}$. From these definitions it follows that
\begin{itemize}
\item \quad$\mathcal{L}^{\,0}_{\eta} \, {z}^{}_{i}  =  {y^{}_i}$ \,\,
in \, $( R - t_i, T_i- t_i)$  \quad  and \quad ${z}^{}_{i} (T_i
- t_i) \,= \,0 \, = \, \dot{z}^{\, 0}_{i} (T_i - t_i)$ \quad for
every $i \in \mathbb{N}$\,,
\item \quad
$\sup_{t \in ( R - t_i \, , \,T_i - t_i )} \,\,e^{\d
t}\,\, |\, z^{}_{i}|\,(t) \,\,\, = \,\,\, |\, z^{}_{i}|\,(0)
\,\,\, = \,\,\, 1$  \quad  for every $i \in \mathbb{N}$\,\,,
\item \quad $ \sup_{t \in (- R - t_i \, , \,T_i - t_i )} \,\,e^{\d
t}\,\, |\, y^{}_{i}|\,(t) \,\, \longrightarrow \,\, 0$ \quad  as
$i \rightarrow +\infty$\,\,.
\end{itemize}
We are now ready to let $i \rightarrow +\infty$ and study the
different limit situations in order to get a contradiction. As a
first step we remark that (up to a subsequence) the intervals
$(R - t_i \, , \, T_i - t_i )$ converge to an interval
$(\beta^- \, , \, \beta^+)$ which is nonempty. In fact, since $ R - t_i \, \leq \, 0$ and $T_i -t_i \, \geq \, 0$, we have
immediately that $\beta^- \in \R^- \cup \{ - \infty \}$ and $\beta^+
\in \R^+ \cup \{ +\infty \}$. Moreover, we claim that $\beta^+$ is
strictly positive. In fact, if it would not be the case, then we
would have that up to a subsequence $T_i - t_i \, \rightarrow \, 0$.
Since $|\,z_i  |\, (0)  \, = \, 1$ and $z_i \, (T_i - t_i) \, = \,
0 \, = \, \dot{z}_i \, (T_i - t_i)$ for every $i \in \mathbb{N}$,
the quantities $|\, \partial_t  {z}_i \,|$'s must explode in the
intervals $(\,T_i-t_i-1 \,,\, T_i-t_i \, )$, as $i \rightarrow
+\infty$. On the other hand, from the hypothesis on $z_i$ and $y_i$
it follows easily that
\begin{eqnarray*}
|\,\partial^2_t\, {{z}}_i |\, (t) & \leq & C \, e^{- \d \,
(T_i - t_i)} \,\, ,
\end{eqnarray*}
on the interval $(\,T_i-t_i-1 \,,\, T_i-t_i \, )$. Since we are
supposing that $T_i-t_i \rightarrow 0$, this inequality tells us
that the second derivatives of ${z}_i$ are uniformly bounded as $i
\rightarrow + \infty$. The fact that $\partial_t {z}_i \, (T_i-t_i)
\, = \, 0$ for every $i \in \mathbb{N}$ implies that the first
derivatives of ${z}_i$ also admit a uniform bound on $(\,T_i-t_i-1
\,,\, T_i-t_i \, )$ as $i \rightarrow +\infty$, which is a
contradiction. Hence we have that $(\beta^-, \, \beta^+)$ is always
nonempty.

\medskip

\noindent The equation satisfied by the $z_i$'s implies that  there exists a function $w_\infty$ such that $w_i
\rightarrow w_\infty$ in $\mathcal{C}^{\, 1}_{loc} (\,\beta^-,\, \beta^+)$. In particular, the function $z_\infty$ verifies the homogeneous equation
\begin{eqnarray}
\label{limit equation 0} \mathcal{L}^{\, 0}_\eta \, z_\infty & = &
0 \quad \,\, \,\, \,\, \hbox{in} \,\, (\,\beta^-,\, \beta^+ ) \,\,,
\end{eqnarray}
in the sense of distributions. As a consequence $z_\infty$ can be written as a linear combination of the Jacobi fields $\Phi_\eta^{0,-}$ and $\Phi_\eta^{0,+}$, namely, there exists $A, B \in \R$ such that
$$
z_\infty \,\, = \,\, A \, \Phi_\eta^{0,-}  + \, B \Phi_\eta^{0,+} \,\, .
$$
Moreover,
the hypothesis on $|\,z_i |\,(0)$ implies at once that
$ |\,z_\infty |\, (0) \,\, =\,\, 1$. Thus $z_\infty$ is non trivial. When $\beta^{+}  <  +\infty$, then the Cauchy data for the limit problem are given by $z_\infty \,(\beta^{+}) \, = \, 0 \, = \, \dot{z}_\infty  \,(\beta^+) $, thus $z_\infty \equiv 0$ and we have a contradiction. If $\beta^+  =  + \infty$, then the decay prescription $|\, z_\infty |(t) \,
\leq \, e^{- \d \, t}$ with $\d >0$ implies that both the constants $A$ and $B$ must be zero, contradicting the non triviality of $z_\infty$.
\end{proof}
\noindent Since the estimate \eqref{apeT0} is independent of the
parameter $T >  R$, we let $T \rightarrow +\infty$ and we obtain
a function $\hat{z}^{\,0}$ which verifies the identity
\begin{eqnarray*}
\mathcal{L}^{\,\,0}_{\eta} \,\, \hat{z}^{\,0}  \,\, = \,\, y^{\,0}
\,\, \quad \hbox{in} \,\, (R,+\infty) \,\, 
\end{eqnarray*}
together with the $T$-uniform estimate
\begin{eqnarray}
\label{0T-uniform} \nor{\,\hat{z}^{0}
}{\mathcal{C}^{2,\b}_{-\d}( R \,,\,+\infty)} & \leq & {C}
\,\,\, \nor{\,{y}^{0}}{\mathcal{C}^{ 0, \b}_{-\d}(R\,,\,+\infty)} \,\, ,
\end{eqnarray}
where $\d> 0$ and $C$ is the same constant as in Lemma \ref{APET0}. 

\medskip

\noindent The next step amounts to correct the function
$\hat{z}^{\,0}$ to a solution of the problem (\ref{ODE 0}). This will be done by adding an element in the kernel of $\mathcal{L}^{\,
0}_{\eta}$ to the function $\hat{w}_0$ in order to fulfill the
homogeneous boundary condition at $t = R$. Here we decide to choose the (conjugated) Jacobi field $\Phi_{\eta}^{0,+} \, := \,
h_{}^{(k-1)/2}\dot{v}_{D,\eta}$. We notice that this correction is no longer a
function in $ C^{\,2, \beta}_{-\d}( R \,,+\infty)
$, with $\d>0$, since it is just bounded at
$+\infty$. With these considerations, we are now ready to set
\begin{eqnarray}
\label{def z^0} 
z^{0} (t) & := & \hat{z}^{\,0}(t) \, - \,
\frac{\hat{z}^{\,0}( R)}{\Phi^{0,+}_{\eta}( R)} \,\,
\Phi^{0,+}_{\eta}(t) \,\,.
\end{eqnarray}
It is now immediate to check that this yields a solution to
(\ref{ODE 0}). We point out that the definition of $w^{\,0}$ makes sense since $R\,=\,\tilde{m}T_\eta \,+\,\tilde{r}$ and thus $\Phi^{0,+}_{\eta}( R)\,\neq\,0$.
 From the definition of $z^0$ it follows at once that its component along the Jacobi field $\Phi^{0,+}_\eta$ is bounded by $(C/\Phi^{0,+}_\eta)(R) \, \,  \nor{\,{y}^{0}}{\mathcal{C}^{ 0, \b}_{-\d}(R\,,\,+\infty)}$ with $C$ as in Proposition \ref{APET0}, hence for $\d>0$ the
solution $z^{\, 0}$ to \eqref{ODE 0} is unique in the space
$$
 C^{\,2,\beta}_{-\d}( R \, , +\infty ) \,
\oplus \,\hbox{span} \, \{ \, \Phi^{0,+}_\eta \,\} \,.
$$
\noindent Now we are ready to treat the projection of (\ref{linear dirichlet}) along the eigenfunction $\phi_j$, with $j=1,\ldots,n$
\begin{eqnarray}
\label{ODE J}
 \left\{
  \begin{split}
\mathcal{L}^{\,\,j}_{\eta} \,\, z^{j}  \,\, = \,\, y^j \,\, & \quad
\,\, \text{in} \quad (\, R \, , \, + \infty \,)\,\,,
\\
w^j \,( R) \,\, = \,\, 0 \,\, ,
  \end{split}
\right.
\end{eqnarray}
where $\mathcal{L}_\eta^{\,j} \, := \, \partial_t^2 \,- \lambda_j a_\eta \, - \, p_\eta$. Proceeding in the same manner as in the case $j=0$ we deduce that for $\d>1$ there exists a unique solution $\hat z^j$ to this problem in the space 
$$
 C^{\,2,\beta}_{-\d}( R \, , +\infty ) \,
\oplus \,\hbox{span} \, \{ \, \Phi^{j,+}_\eta \,\} \,
$$
which can be written as 
$$
z^j \,\, = \,\, \hat{z}^j \, + \, \frac{ \hat z^j (R)}{ \big [ \, \tfrac{n-2k}{2k} v_{D,\eta} (R) Ê\, - \, \dot{v}_{D,\eta} (R)  \, \big] } \,\, \Phi^{j,+}_{\eta} \,\, , \quad \quad j=1, \ldots n \,\, .
$$
Note that this definition makes sense since, thanks to $h(t)\,>\,0$ for any $t\in\mathbb{R}$ (see \eqref{h-del-F}),  there holds that 
$ \big[\tfrac{n-2k}{2k} v_{D,\eta} (R) Ê\, - \, \dot{v}_{D,\eta} (R)  \, \big]\,\neq\,0$.
Moreover we have as in the previous case that $\hat z^j$ verifies the estimate
\begin{eqnarray}
\label{JT-uniform} \nor{\,\hat{z}^{j}
}{C^{2,\b}_{-\d}( R \,,\,+\infty)} & \leq & {C}
\,\,\, \nor{\,{y}^{j}}{C^{ 0, \b}_{-\d}(R\,,\,+\infty)} \,\, ,
\end{eqnarray}
where now $\d >1$ and $C$ is a positive constant only depending on $n,k$ and $\d$ and the component of $z^j$ along the (conjugate) Jacobi fields $\Phi_\eta^{j,+} := h^{(k-1)/2} \Psi^{j,+}_\eta$ is bounded by $\big( \,C \, / \, [ \, \tfrac{n-2k}{2k} v_{D,\eta} (R) Ê\, - \, \dot{v}_{D,\eta} (R)  \, \big]     \, \big) \, \nor{\,{y}^{j}}{C^{ 0, \b}_{-\d}(R\,,\,+\infty)} $.

\medskip

To summarize all the result of this subsection, we define the finite dimensional function space
$$
\mathcal{W}^+ (D_{\eta,R}) \,\, := \,\, {\rm span} \, \big\{  \Psi^{j,+}_\eta \,\, : \,\, j=0, \ldots,n   \big\}
$$
and for a function $u = \sum_{j=0}^n \, a^+_j \, \Psi^{j,+}_\eta  \in  \mathcal{W}^+ (D_{\eta,R})$ we simply set 
\be
\label{normaW}
\nor{\,u \,}{\mathcal{W}^+ (D_{\eta,R})} \,\, := \,\, \hbox{$\sum_{j=0}^n$} \, |\,a_j^+| \,\, .
\ee
We thus have proved the following
\begin{pro}
\label{APER}
Let $1  <  \d  <  \bar \d (n,k) 
$ and $R\,=\,\tilde{m}T_\eta \,+\,\tilde{r}$ as above, then for every $f \in C^{0,\b}_{-\d} (D_{\eta,R})$ there exists a unique solution 
$w \in C^{2,\b}_{-\d} (D_{\eta,R}) \oplus \mathcal{W}^+ (D_{\eta,R})$ to the problem 
\begin{eqnarray*}
 \left\{
  \begin{split}
\mathbb{L}(v_{D, \eta}, g_{cyl})  \, [w  ] \,\, = \,\, f\,\, & \quad \,\, \text{in}
\quad (R \,, +\infty) \times \mathbb{S}^{n-1}\,\,,
\\
w \,\,  = \,\, 0 \,\,  & \quad \,\,\text{on}  \quad \{ R\} \times \mathbb{S}^{n-1}
\,\, .
  \end{split}
\right.
\end{eqnarray*}
Moreover we have that there exists a positive constant $C=C(n,k,\d,\eta)>0$ such that 
\begin{eqnarray*}
\label{aper+}
\nor{w}{C^{2,\b}_{-\d} (D_{\eta,R}) \oplus \mathcal{W}^+ (D_{\eta,R})} \,\,\, := \,\,\, \nor{w}{C^{2,\b}_{-\d} (D_{\eta,R})} + \,\nor{w}{\mathcal{W}^+ (D_{\eta,R})}  & \leq & C \,\,\, \nor{f}{C^{0,\b}_{-\d} (D_{\eta,R})} \,\, .
\end{eqnarray*}
\end{pro}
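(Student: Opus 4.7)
The plan is to assemble the three regimes handled separately in the preceding analysis (high frequencies, zero frequency, frequencies $j=1,\dots,n$) into a single statement. As in the rest of the subsection, it is equivalent to prove the result for the conjugated unknown $z = h^{(k-1)/2} w$ and the conjugated datum $y = -C_{n,k}^{-1} v_{D,\eta}^{-1} h^{-(k-1)/2} f$, because the conjugation factors are smooth, bounded and bounded away from zero, hence preserve the weighted H\"older spaces up to equivalent norms.

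First I would decompose
\begin{eqnarray*}
y(t,\theta) \,\,=\,\, \sum_{j=0}^{n} y^j(t)\,\phi_j(\theta) \,\,+\,\, \bar y(t,\theta),
\end{eqnarray*}
with $\bar y$ supported on frequencies $j\geq n+1$. To each component I apply the piece of theory already built up: for $\bar y$ the variational construction followed by the uniform estimate \eqref{apeT} and the limit $T\to+\infty$ produce $\bar z\in C^{2,\beta}_{-\delta}(D_{\eta,R}^{})$ with $\bar z|_{t=R}=0$ and satisfying \eqref{ape}, provided $|\delta|<\bar\delta(n,k)$. For the component $y^0$ the backward Cauchy argument yields $\hat z^{0}$ with the uniform estimate \eqref{0T-uniform} for $\delta>0$; the correction \eqref{def z^0} by the Jacobi field $\Phi^{0,+}_\eta$ enforces the boundary condition at $t=R$ and makes sense because the hypothesis $R=\tilde{m}T_\eta+\tilde r$ guarantees $\Phi^{0,+}_\eta(R)\neq 0$. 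For each component $y^j$ with $j=1,\dots,n$ the same scheme gives $\hat z^{j}$ with the estimate \eqref{JT-uniform} for $\delta>1$, and the correction by $\Phi^{j,+}_\eta$ enforces the boundary condition, its coefficient being well defined since $[\tfrac{n-2k}{2k}v_{D,\eta}(R)-\dot v_{D,\eta}(R)]^{2}$ is a linear combination of $h(R)>0$ and the square of $\Phi^{0,+}_\eta(R)$, and hence is nonzero.

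Summing these contributions and converting back through $w=h^{-(k-1)/2}z$ produces a solution
\begin{eqnarray*}
w \,\,=\,\, w_{\mathrm{dec}} \,\,+\,\, \sum_{j=0}^{n} c_j^{+}\,\Psi^{j,+}_\eta, \qquad w_{\mathrm{dec}}\in C^{2,\beta}_{-\delta}(D_{\eta,R}),
\end{eqnarray*}
with $w_{\mathrm{dec}}|_{t=R}+\sum_j c_j^+ \Psi^{j,+}_\eta|_{t=R}=0$, i.e.\ $w=0$ on $\{R\}\times\mathbb{S}^{n-1}$. The common range of admissible weights across the three regimes is exactly $1<\delta<\bar\delta(n,k)$, which is the hypothesis of the proposition. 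The bound on $\|w\|_{C^{2,\beta}_{-\delta}} + \|w\|_{\mathcal{W}^+}$ in terms of $\|f\|_{C^{0,\beta}_{-\delta}}$ follows by adding the estimates \eqref{ape}, \eqref{0T-uniform}, \eqref{JT-uniform} together with the explicit bounds on the coefficients $c_j^+$ recorded right after \eqref{def z^0} and \eqref{JT-uniform}, and using the orthogonality of the eigenfunctions $\phi_j$ so that projection onto a finite sum of frequencies is bounded on the weighted H\"older spaces.

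For uniqueness, suppose $w_1$ and $w_2$ both solve the problem in the stated space and set $w=w_1-w_2$. Projecting onto each $\phi_j$ reduces the question to three ODE/PDE uniqueness statements already proved: for $j\geq n+1$ Lemma \ref{inj.} (or equivalently the injectivity built into the $T\to\infty$ passage with $\delta>0$) forces the high-frequency part to vanish; for $j=0$ the uniqueness in the space $C^{2,\beta}_{-\delta}(R,+\infty)\oplus\mathrm{span}\{\Phi^{0,+}_\eta\}$ with $\delta>0$ and homogeneous Dirichlet datum at $R$ was established in the paragraph following \eqref{def z^0}; for $j=1,\dots,n$ the analogous uniqueness in $C^{2,\beta}_{-\delta}(R,+\infty)\oplus\mathrm{span}\{\Phi^{j,+}_\eta\}$ with $\delta>1$ was obtained by the same argument. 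The main subtlety is bookkeeping of the admissible weight ranges in the three regimes and the verification that the two ``denominators'' $\Phi^{0,+}_\eta(R)$ and $\tfrac{n-2k}{2k}v_{D,\eta}(R)-\dot v_{D,\eta}(R)$ do not vanish; the former is exactly the reason for the assumption on $R$, while the latter follows from $h>0$ on $\mathbb{R}$.
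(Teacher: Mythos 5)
Your proposal is correct and follows essentially the same route as the paper, which obtains Proposition \ref{APER} precisely by assembling the subsection's three pieces (high-frequency variational construction with the uniform estimate \eqref{apeT}, the backward Cauchy argument plus Jacobi-field correction for $j=0$ and $j=1,\dots,n$, and the non-vanishing of $\Phi^{0,+}_\eta(R)$ and of $\tfrac{n-2k}{2k}v_{D,\eta}(R)-\dot v_{D,\eta}(R)$ guaranteed by the choice of $R$ and by $h>0$). The only blemish is your parenthetical claim that $[\tfrac{n-2k}{2k}v_{D,\eta}(R)-\dot v_{D,\eta}(R)]^2$ is a linear combination of $h(R)$ and $\Phi^{0,+}_\eta(R)^2$ — the cleaner (and the paper's) reason is simply that $h>0$ and $v_{D,\eta}>0$ force $\tfrac{n-2k}{2k}v_{D,\eta}>|\dot v_{D,\eta}|$ — but this does not affect the argument.
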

The same analysis can be reproduced on a domain of the form $D_{\eta,-R} := (-\infty,-R) \times \mathbb{S}^{n-1}$, in order to solve the problem
\begin{eqnarray}
\label{DIRICHLET-}
 \left\{
  \begin{split}
\mathbb{L}(v_{D, \eta}, g_{cyl})  \, [w ] \,\, = \,\, f\,\, & \quad \,\, \text{in}
\quad (-\infty , \, -R) \times \mathbb{S}^{n-1}\,\,,
\\
w \,\,  = \,\, 0 \,\,  & \quad \,\,\text{on}  \quad \{- R\} \times \mathbb{S}^{n-1}
\,\, ,
  \end{split}
\right.
\end{eqnarray}
where $R$ is a real number of the form $R\,=\,\tilde{m}T_\eta \,+\,\tilde{r}$, as in Proposition \ref{APER}. In this situation we use the finite dimensional function space
$$
\mathcal{W}^- (D_{\eta,-R}) \,\, := \,\, {\rm span} \, \big\{  \Psi^{j,-}_\eta \,\, : \,\, j=0, \ldots,n   \big\}
$$
for the corrections along the low frequencies. Obviously, for a function $u = \sum_{j=0}^n \, a^-_j \, \Psi^{j,-}_\eta  \in  \mathcal{W}^+ (D_{\eta,-R})$ the norm can be defined by 
$$
\nor{\,u \,}{\mathcal{W}^- (D_{\eta,-R})} \,\, := \,\, \hbox{$\sum_{j=0}^n$} \, |\,a_j^-| \,\, .
$$
In analogy with the previous proposition, it is straightforward to obtain the following
\begin{pro}
\label{APER-}
Let $1  <  \d  <  \bar \d (n,k)$ and $R\,=\,\tilde{m}\,T_\eta +\tilde{r}$ as above,
then for every $f \in C^{0,\b}_{-\d} (D_{\eta,-R})$ there exists a unique solution 
$w \in C^{2,\b}_{-\d} (D_{\eta,-R}) \oplus \mathcal{W}^- (D_{\eta,-R})$ to the problem \eqref{DIRICHLET-}. Moreover we have that there exists a positive constant $C=C(n,k,\d,\eta)>0$ such that 
\begin{eqnarray*}
\label{aper-}
\nor{w}{C^{2,\b}_{-\d} (D_{\eta,-R}) \oplus \mathcal{W}^- (D_{\eta,-R})} \,\,\, := \,\,\, \nor{w}{C^{2,\b}_{-\d} (D_{\eta,-R})} + \,\nor{w}{\mathcal{W}^- (D_{\eta,-R})}  & \leq & C \,\,\, \nor{f}{C^{0,\b}_{-\d} (D_{\eta,-R})} \,\, .
\end{eqnarray*}
\end{pro}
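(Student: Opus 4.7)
My plan is to deduce Proposition \ref{APER-} from Proposition \ref{APER} by reflecting across $t=0$. The key structural fact is that Delaunay solutions satisfy $v_{D,\eta}(-t)=v_{D,\eta}(t)$ (they are periodic with critical point at $t=0$), so $\dot v_{D,\eta}$ is odd and the quantities $h$, $F$, $a_\eta$ and $p_\eta$ defined in \eqref{h-del-F}--\eqref{coefficients D p} are all even functions of $t$. Consequently the operator $\mathbb L(v_{D,\eta},g_{cyl})$ and its conjugate $\mathcal L_\eta$ commute with pullback by the reflection $\iota:(t,\theta)\mapsto(-t,\theta)$. Moreover, since $\cosh t$ is even, $\iota^*$ acts as an isometry of every weighted H\"older space $C^{m,\b}_{-\d}$, and it maps $D_{\eta,-R}$ onto $D_{\eta,R}$ while exchanging the corresponding boundaries.

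Concretely, given $f\in C^{0,\b}_{-\d}(D_{\eta,-R})$, I would set $\tilde f:=f\circ\iota\in C^{0,\b}_{-\d}(D_{\eta,R})$, apply Proposition \ref{APER} to obtain the unique $\tilde w\in C^{2,\b}_{-\d}(D_{\eta,R})\oplus\mathcal W^+(D_{\eta,R})$ solving the Dirichlet problem with datum $\tilde f$ at $\{R\}\times\mathbb S^{n-1}$, and then define $w:=\tilde w\circ\iota$. The equation and the zero boundary condition at $\{-R\}\times\mathbb S^{n-1}$ follow directly from the $\iota$-equivariance of $\mathbb L(v_{D,\eta},g_{cyl})$, and the $C^{2,\b}_{-\d}$ part of the bound transfers verbatim via the isometry of the weighted norms, yielding an estimate of the desired form $C=C(n,k,\d,\eta)$.

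To match the announced decomposition in $C^{2,\b}_{-\d}(D_{\eta,-R})\oplus\mathcal W^-(D_{\eta,-R})$, a direct computation from \eqref{psij-}--\eqref{psij}, exploiting the parity of $v_{D,\eta}$ and $\dot v_{D,\eta}$, gives $\Psi^{j,+}_\eta\circ\iota=\Psi^{j,-}_\eta$ for $j=1,\ldots,n$, so the exponentially growing/decaying corrections produced by reflection land automatically in the correct span. The only subtle point concerns the low mode $j=0$: one has $\iota^*\Psi^{0,+}_\eta=-\Psi^{0,+}_\eta$ rather than $\Psi^{0,-}_\eta$. Here I would argue that the one-dimensional space of admissible corrections along the spherical mode $\phi_0$ can be equivalently spanned by either $\Psi^{0,+}_\eta$ or $\Psi^{0,-}_\eta$, so long as the chosen Jacobi field does not vanish at $t=-R$. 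The hypothesis $R=\tilde m\,T_\eta+\tilde r$ with $\tilde r$ suitably small, exactly as in Proposition \ref{APER}, is precisely what guarantees a uniform lower bound on $|\Psi^{0,-}_\eta(-R)|$, so a scalar change of basis in the two-dimensional kernel of $\mathcal L^0_\eta=\partial_t^2-p_\eta$ converts the $\Psi^{0,+}_\eta$-correction produced by reflection into a $\Psi^{0,-}_\eta$-correction, absorbing the ratio $|\Psi^{0,+}_\eta(-R)|/|\Psi^{0,-}_\eta(-R)|$ into the final constant.

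The one genuine technical obstacle in this plan is precisely the uniform nondegeneracy of $\Psi^{0,-}_\eta$ at the truncation height $-R$, needed to make the change-of-basis step quantitative and to conclude the estimate on $\nor{w}{\mathcal W^-(D_{\eta,-R})}$. This follows by the same ODE/transversality argument that justifies the analogous choice of $\Psi^{0,+}_\eta$ in the proof of Proposition \ref{APER}: the Floquet structure of the kernel of $\mathcal L^0_\eta$ forces the bounded and linearly growing solutions to have disjoint zero sets modulo the period $T_\eta$, so a small generic shift $\tilde r$ avoids both simultaneously. Once this lower bound is in hand, the whole argument is a mechanical mirror image of Proposition \ref{APER}, and no new analysis beyond what is already in Section~\ref{analisi-del} is required.
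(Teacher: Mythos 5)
Your reflection argument is correct, and it is genuinely different from what the paper does: the paper simply re-runs the direct construction of Proposition \ref{APER} on $(-\infty,-R)\times\mathbb{S}^{n-1}$ (separation of variables, coercivity for $j\geq n+1$, backward Cauchy problems plus a Jacobi-field correction for $j=0,\ldots,n$, now taken in $\mathcal{W}^-$), whereas you transport the already-proved statement via the isometry $\iota:(t,\theta)\mapsto(-t,\theta)$, using the evenness of $v_{D,\eta}$, hence of $h$, $a_\eta$, $p_\eta$ and of the weight $\cosh t$. You correctly identify the one place where the naive reflection fails to land in the advertised space: $\iota^*\Psi^{j,+}_\eta=\Psi^{j,-}_\eta$ for $j=1,\dots,n$ but $\iota^*\Psi^{0,+}_\eta=-\Psi^{0,+}_\eta$, so the reflected solution lies in $C^{2,\b}_{-\d}\oplus{\rm span}\{\Psi^{0,+}_\eta,\Psi^{1,-}_\eta,\dots,\Psi^{n,-}_\eta\}$. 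Your fix — adding the multiple of the kernel element of $\mathcal{L}^0_\eta$ that vanishes at $t=-R$ which trades the $\Phi^{0,+}_\eta$-component for a $\Phi^{0,-}_\eta$-component — is exactly right, and the nondegeneracy it requires, $\Phi^{0,-}_\eta(-R)\neq 0$, is the mirror of the condition $\Phi^{0,+}_\eta(R)\neq 0$ that the paper itself imposes through $R=\tilde m T_\eta+\tilde r$ (note $\Psi^{0,-}_\eta(\tilde m T_\eta)=\Psi^{0,-}_\eta(0)=\partial_\eta\big(\eta^{(n-2k)/(2k)}\big)\neq 0$, or alternatively the constancy of the Wronskian of $\Phi^{0,\pm}_\eta$ at the zeros of $\Phi^{0,+}_\eta$), so your approach costs nothing extra and the ratio $\Phi^{0,+}_\eta(-R)/\Phi^{0,-}_\eta(-R)$ is absorbed into $C(n,k,\d,\eta)$ as you say. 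Two small points worth making explicit: the swap does not change the decaying part $\hat z^0$ at all, so the $C^{2,\b}_{-\d}$ estimate is untouched; and uniqueness in $C^{2,\b}_{-\d}\oplus\mathcal{W}^-$ does not follow purely by reflecting the uniqueness of Proposition \ref{APER} (the space has changed in the $\phi_0$-mode), but it is immediate from the mode decomposition — a kernel element $a\,\Phi^{0,+}_\eta+b\,\Phi^{0,-}_\eta$ lying in $C^{2,\b}_{-\d}\oplus{\rm span}\{\Phi^{0,-}_\eta\}$ forces $a=0$, and the boundary condition together with $\Phi^{0,-}_\eta(-R)\neq0$ forces $b=0$. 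What your route buys is economy (no re-derivation of the a priori bounds of Lemmas \ref{APET} and \ref{APET0} on the mirrored domain); what the paper's route buys is that it never has to confront the sign/parity mismatch in the $j=0$ mode at all, since it builds the correction directly along $\Phi^{0,-}_\eta$.
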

To conclude we observe that as a consequence of the analysis 
of this subsection we can also solve the same problems with nonzero boundary condition, namely for every $1<\d< \bar\d (n,k)
$, every $f \in {C}^{0,\b}_{-\d} (D_\eta)$ and every boundary datum $v^\pm \in C^{2,\b}(\partial D_{\eta, \pm R})$, there exists a unique function $w^\pm \in C^{2,\b}_{-\d} (D_{\eta,\pm R}) \oplus \mathcal{W}^\pm (D_{\eta, \pm R})$ verifying 
\begin{eqnarray*}
\label{DIRICHLET PM}
 \left\{
  \begin{split}
\mathbb{L}(v_{D, \eta}, g_{cyl})  \, [\,w^\pm] \,\, = \,\, f\,\, & \quad \,\, \text{in}
\quad D_{\eta,\pm R}\,\,,
\\
w^\pm \,\,  = \,\, v^\pm \,\,  & \quad \,\,\text{on}  \quad \partial  D_{\eta,\pm R}\,\, ,
  \end{split}
\right.
\end{eqnarray*}
together with the estimate 
\begin{eqnarray*}
\label{apepm}
\nor{\,w^\pm}{C^{2,\b}_{-\d} (D_{\eta,\pm R}) \oplus \mathcal{W}^-\pm(D_{\eta,\pm R})}  & \leq & C \,\,\, \big[ \,\, \nor{f}{C^{0,\b}_{-\d} (D_{\eta, \pm R})}  \,\, + \,\,  \nor{\, v^{\pm}}{C^{2,\b}(\partial D_{\eta, \pm R})}    \, \big] \,\, ,
\end{eqnarray*}
for some positive constant $C= C(n,k, \d,\eta,R)>0$. We thus have proved 
\begin{pro}\label{iso-cylinder}
Let $1 < \d < \bar\d $ and $R\,=\,\tilde{m}\,T_\eta +\tilde{r}$ as above, then the operator
\bea
\mathbb{L}(v_{D, \eta}, g_{cyl}) \, \otimes \, \partial^\pm \,\,: \,\, C^{2,\b}_{-\d}  (D_{\eta,\pm R}) \, \oplus \, \mathcal{W}^\pm (D_{\eta, \pm R}) & \longrightarrow & C^{0,\b}_{-\d} (D_{\eta, \pm R}) \, \times \, C^{2, \b} (\partial D_{\eta, \pm R})
\eea
is an isomorphism, where $\partial^\pm  :  w \, \mapsto \, w_{|\partial D_{\eta, \pm R}}$.
\end{pro}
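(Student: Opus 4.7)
The plan is to bundle the surjectivity and uniqueness results already established for the zero-boundary case in Propositions \ref{APER} and \ref{APER-} into the isomorphism statement for arbitrary boundary data. Continuity of the map $\mathbb{L}(v_{D, \eta}, g_{cyl}) \otimes \partial^\pm$ is immediate: the operator $\mathbb{L}(v_{D, \eta}, g_{cyl})$ has bounded periodic coefficients, so it is bounded between the weighted Hölder spaces defined via the weight $(\cosh t)^{-\d}$, and the trace map $\partial^\pm$ is bounded from $C^{2,\b}_{-\d}$ to $C^{2,\b}$ of the boundary by the standard restriction estimate. So it only remains to verify surjectivity with the quantitative estimate and injectivity.

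For surjectivity, given $(f, v^\pm) \in C^{0,\b}_{-\d}(D_{\eta, \pm R}) \times C^{2,\b}(\partial D_{\eta, \pm R})$, I would reduce to the zero-boundary case by constructing a compactly supported extension of $v^\pm$. Exploiting the product structure $D_{\eta, \pm R} \simeq (\pm R, \pm\infty) \times \mathbb{S}^{n-1}$, set $V^\pm(t,\theta) := \chi(|t \mp R|)\, v^\pm(\theta)$, where $\chi$ is a smooth cutoff with $\chi(0)=1$ and $\chi(s) = 0$ for $s \geq 1$. Then $V^\pm \in C^{2,\b}_{-\d}(D_{\eta, \pm R})$ is supported in a unit neighborhood of the boundary, has zero $\mathcal{W}^\pm$-component, and satisfies $\|V^\pm\|_{C^{2,\b}_{-\d}} \leq C \|v^\pm\|_{C^{2,\b}(\partial D_{\eta, \pm R})}$. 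The function $f - \mathbb{L}(v_{D, \eta}, g_{cyl})[V^\pm]$ belongs to $C^{0,\b}_{-\d}(D_{\eta, \pm R})$, and Propositions \ref{APER} or \ref{APER-} produce a unique $\tilde w^\pm \in C^{2,\b}_{-\d}(D_{\eta, \pm R}) \oplus \mathcal{W}^\pm(D_{\eta, \pm R})$ with zero boundary data solving the corresponding Dirichlet problem. The desired solution is $w^\pm := \tilde w^\pm + V^\pm$, and combining the two estimates yields the asserted bound.

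Injectivity follows directly from the uniqueness assertion of Propositions \ref{APER} and \ref{APER-}: if $w \in C^{2,\b}_{-\d}(D_{\eta, \pm R}) \oplus \mathcal{W}^\pm(D_{\eta, \pm R})$ is annihilated by $\mathbb{L}(v_{D, \eta}, g_{cyl})$ in $D_{\eta, \pm R}$ and has zero trace on $\partial D_{\eta, \pm R}$, then $w$ is the unique solution in this space of the homogeneous Dirichlet problem, hence $w \equiv 0$.

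The only delicate point, which is the reason for the arithmetic restriction $R = \tilde m T_\eta + \tilde r$, already surfaced in the construction leading to Propositions \ref{APER} and \ref{APER-}: the correction step along the kernel element $\Phi^{0,+}_\eta$ requires the non-vanishing of $\Phi^{0,+}_\eta(R) = h^{(k-1)/2}(R) \, \dot v_{D,\eta}(R)$ at $t = R$, which is precisely what this choice of $R$ guarantees. This same non-vanishing is what makes the sum $C^{2,\b}_{-\d} \oplus \mathcal{W}^\pm$ genuinely direct, so that the isomorphism statement is well-posed. No further obstacle is expected: the hard analysis has already been carried out in the preceding subsections, and this proposition is essentially a repackaging of those results for later use in Section \ref{global-linear-analysis}.
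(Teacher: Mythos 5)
Your argument is correct and matches the paper's (implicit) reasoning: the paper records Proposition \ref{iso-cylinder} simply as a consequence of Propositions \ref{APER} and \ref{APER-}, and the reduction to zero boundary data via a compactly supported extension $V^\pm$ of $v^\pm$, followed by the uniqueness assertion for injectivity, is exactly the intended step. One small aside: the directness of the sum $C^{2,\b}_{-\d}\oplus\mathcal{W}^\pm$ follows from the fact that for $\d>1$ no nontrivial element of $\mathcal{W}^\pm$ decays like $(\cosh t)^{-\d}$ (the Jacobi fields $\Psi^{j,+}_\eta$ decay at most like $e^{-t}$ and $\Psi^{0,+}_\eta$ is merely bounded), not from the non-vanishing of $\Phi^{0,+}_\eta(R)$, which is instead what permits the boundary correction in the construction underlying Proposition \ref{APER}.
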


\subsection{A linear Dirichlet problem on a finite cylinder $(-R,R) \times \mathbb{S}^{n-1}$}
\label{sezione-cilindro-finito}
In this subsection we study the Dirichlet problem
\begin{eqnarray}
\label{DIRICHLET-cilindro-finito}
 \left\{
  \begin{split}
\mathbb{L}(v_{D, \eta}, g_{cyl})  \, [w  ] \,\, = \,\, f\,\, & \quad \,\, \text{in}
\quad (-R \,, R ) \times \mathbb{S}^{n-1}\,\,,
\\
w \,\,  = \,\, 0 \,\,  & \quad \,\,\text{on}  \quad \{ R, -R\} \times \mathbb{S}^{n-1}
\,\, ,
  \end{split}
\right.
\end{eqnarray}
for which we are going to prove the following 
\begin{pro}
\label{APER-cil-fin}
Let $R\,=\,\tilde{m}\,T_\eta +\tilde{r}$ with $\tilde{m}\,\in\mathbb{N}$ sufficiently large and $\tilde{r}\,\in \,\mathbb{R}$ sufficiently small.
Then, for any $\,f\,\in\, C^{0,\beta}((-R,R)\times\mathbb{S}^{n-1})$ there exists a unique solution
$w\,\in \,C^{2,\beta}((-R,R)\times\mathbb{S}^{n-1})$ to
\eqref{DIRICHLET-cilindro-finito}. Moreover, there exists a positive constant $C\,=\,C(n,k)$ such that  
\begin{equation}
\label{stima-dominio-compatto}
\nor{w}{C^{2,\beta}((-R,R)\times\mathbb{S}^{n-1})}\,\,\le\,\,\,C\,\,\,\nor{f}{C^{0,\beta}((-R,R)\times\mathbb{S}^{n-1})}.
\end{equation}
\end{pro}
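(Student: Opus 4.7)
The plan is to mimic the half-cylinder analysis of the previous subsection, only now on the bounded interval $(-R,R)$. First I would pass to the conjugate operator $\mathcal{L}_\eta = \partial_t^2 + a_\eta \Delta_\theta - p_\eta$ via the substitutions $z := h^{(k-1)/2} w$ and $y := -C_{n,k}^{-1} v_{D,\eta}^{-1} h^{-(k-1)/2} f$, turning \eqref{DIRICHLET-cilindro-finito} into
$$
\mathcal{L}_\eta z = y \quad \text{in } (-R,R)\times\Sp, \qquad z \equiv 0 \text{ on } \{\pm R\}\times\Sp.
$$
Since the conjugating factors are smooth, strictly positive and $T_\eta$-periodic, the $C^{2,\beta}$ norms of $w$ and $z$ (respectively of $f$ and $y$) are equivalent with constants depending only on $n,k$. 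I would then expand along the eigenfunctions of $\Delta_\theta$, writing $z = \sum_j z^j(t)\phi_j(\theta)$ and $y = \sum_j y^j(t)\phi_j(\theta)$, and treat the high and low frequencies separately, exactly as in the proof of Proposition \ref{APER}.

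For the high-frequency part $j\geq n+1$, the coercivity inequality \eqref{coercivity}, uniform in $\eta$, implies that the Euler--Lagrange functional
$$
E(\bar z) = \int_{-R}^{R}\!\!\int_{\Sp}\bigl(|\partial_t \bar z|^2 + a_\eta |\nabla_\theta \bar z|_\theta^2 + p_\eta \bar z^2 + 2\bar y \bar z\bigr)\,dt\,d\theta
$$
is strictly convex and coercive on the orthogonal complement of $\mathrm{span}\{\phi_0,\ldots,\phi_n\}$ inside $H^1_0((-R,R)\times\Sp)$. The direct method of the calculus of variations produces a unique weak solution of the projected Dirichlet problem, and standard elliptic regularity upgrades it to a $C^{2,\beta}$ strong solution. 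The coercivity constant and the $L^\infty$ norms of $a_\eta, p_\eta$ are controlled in terms of $n,k$ alone, so the associated $C^{2,\beta}$ estimate enjoys the same dependence.

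For each low frequency $j=0,1,\ldots,n$ I would solve the ODE boundary value problem $\mathcal{L}^{\,j}_\eta z^j = y^j$ on $(-R,R)$ with $z^j(\pm R)=0$. The kernel of $\mathcal{L}^{\,j}_\eta$ on $\R$ is two-dimensional and explicitly spanned by the conjugated Jacobi fields $\Phi^{\,j,\pm}_\eta := h^{(k-1)/2}\Psi^{\,j,\pm}_\eta$ built in \eqref{psio}, \eqref{psij-}, \eqref{psij}, so by the Fredholm alternative solvability on $(-R,R)$ is equivalent to the non-singularity of the $2\times 2$ boundary matrix $M_j(R)$ whose rows are $\bigl(\Phi^{\,j,-}_\eta(\pm R),\Phi^{\,j,+}_\eta(\pm R)\bigr)$. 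For $j\geq 1$ the two Jacobi fields grow as $e^{\pm t}$ modulated by a $T_\eta$-periodic factor, so $\det M_j(R)$ is of order $\sinh(2R)$ and stays bounded away from zero whenever $R=\tilde m T_\eta+\tilde r$ with $\tilde m$ large. For $j=0$, $\Phi^{0,+}_\eta$ is periodic while $\Phi^{0,-}_\eta$ has genuine linear growth; $\det M_0(R)$ is then dominated by $R$ times the values $\dot v_{D,\eta}(\pm R)$, which the arithmetic condition on $R$ keeps uniformly away from zero. Existence and uniqueness in each low-frequency mode follow, together with a pointwise bound via the explicit Green's function.

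The main obstacle is not existence but rather the $R$-uniformity of the $C^{2,\beta}$ constant in \eqref{stima-dominio-compatto}. I would attack it by a contradiction-and-compactness argument in the spirit of Lemma \ref{APET0}: assuming the estimate fails, one extracts sequences $(R_i, z_i, y_i)$ with $R_i \to \infty$, $\|z_i\|_{C^{2,\beta}}=1$ and $\|y_i\|_{C^{0,\beta}}\to 0$, re-centers at points where $|z_i|$ is realized, and passes to a $C^{1}_{\mathrm{loc}}$ limit $z_\infty$ solving $\mathcal{L}_\eta z_\infty = 0$ either on a translation-invariant half-cylinder or on the full cylinder. In the high-frequency sector coercivity forces $z_\infty\equiv 0$, while in the low-frequency sector $z_\infty$ must be a linear combination of Jacobi fields; the non-degeneracy of $M_j(R_i)$ ensured by the arithmetic constraint on $R_i$, combined with the injectivity statement of Lemma \ref{inj.} (to rule out decaying Jacobi fields on a half-cylinder), produces $z_\infty\equiv 0$, contradicting $|z_\infty(0)|=1$.
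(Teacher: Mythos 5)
Your core argument is the one the paper uses: conjugate to $\mathcal{L}_\eta$, reduce existence and uniqueness on the compact cylinder to triviality of the kernel via the Fredholm alternative, kill the high frequencies by the coercivity \eqref{coercivity} and the variational structure \eqref{energia-alte-frequenze}, and kill the low frequencies by writing each $z^j$ as a combination of the conjugated Jacobi fields $\Phi^{j,\pm}_\eta$ and checking that the $2\times2$ boundary system is non-degenerate for $R=\tilde m T_\eta+\tilde r$ (the paper does this by exploiting the parity of $v_{D,\eta}$, $\Psi^{0,\pm}_\eta$ rather than by estimating a determinant, but the content is identical). For the estimate \eqref{stima-dominio-compatto} the paper simply invokes standard elliptic regularity on the fixed compact domain, which is all that is needed later, since $R$ is frozen at $R_1$ or $R_2$ in Section \ref{global-linear-analysis} and only $\e\to 0$.

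The one place where you go beyond the paper is the closing blow-up argument aimed at making $C$ uniform as $R\to\infty$, and that step has a genuine gap. If the recentering points stay at distance tending to infinity from $\{\pm R_i\}$, the limit problem lives on the whole cylinder with no boundary condition, and in the $j=0$ mode the limit $z_\infty$ may be a nonzero multiple of $\Phi^{0,+}_\eta=h^{(k-1)/2}\dot v_{D,\eta}$, which is bounded and periodic; neither coercivity nor Lemma \ref{inj.} (which requires decay at rate $\d>1$) excludes it, so no contradiction is reached. This does not damage the proposition as it is actually used, but as written your argument does not deliver a constant independent of $R$; if you want that uniformity you would need to retain quantitative control of the boundary data through the limit, e.g.\ by tracking the low-frequency coefficients $A_j,B_j$ via the inverse of the boundary matrix rather than by a pure compactness argument.
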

\begin{proof} 
Thanks to the compactness of the domain, we use the Fredholm alternative to prove the well posedness of \eqref{DIRICHLET-cilindro-finito}. Thus, the existence and uniqueness of solutions to \eqref{DIRICHLET-cilindro-finito} follows from the fact that the homogeneous problem
\begin{eqnarray*}
\label{dir-unique}
 \left\{
  \begin{split}
\mathbb{L}(v_{D, \eta}, g_{cyl})  \, [w  ] \,\, = \,\, 0\,\, & \quad \,\, \text{in}
\quad (-R \,, R ) \times \mathbb{S}^{n-1}\,\,,
\\
w \,\,  = \,\, 0 \,\,  & \quad \,\,\text{on}  \quad \{ \pm R\} \times \mathbb{S}^{n-1}
\,\, ,
  \end{split}
\right.
\end{eqnarray*}
only admits the trivial solution $w \equiv 0$. This is equivalent to say that $z\,\equiv\,0$ is the unique solution of the conjugate problem 
\begin{eqnarray}
 \left\{
  \begin{split}
\mathcal{L}_\eta  \, z \,\, = \,\,0 \,\, & \quad \,\, \text{in}
\quad (-R,R)\times \mathbb{S}^{n-1}\,\,,
\\
z \,\,  = \,\, 0 \,\,  & \quad \,\,\text{on}  \quad \left\{\pm R\right\}\times \mathbb{S}^{n-1}\,\, ,
\label{linear dirichlet-coniugato}
\end{split}
\right.
\end{eqnarray}
where $z\,:=\,h^{(k-1)/2}w$.
To show that $z=0$ uniquely solves \eqref{linear dirichlet-coniugato} we project the equation along the eigenfunctions $\phi_j$ of the Laplace Beltrami operator on the sphere
 $(\mathbb{S}^{n-1},g_{\mathbb{S}^{n-1}})$. Then, as we already did in the proof of Proposition
\ref{APER}, we treat separately the high frequencies ($j\ge n+1$) and the low frequencies ($j=0,\ldots, n$). Thus, we decompose $z$ as
\begin{eqnarray}
\label{decomposizione-z}
z\, (t,\theta) \,\,\,\, = \,\,\,\, \sum_{j =
0}^{n}\, z^j(t)\,\phi_j(\theta) \,+ \,  \sum_{j=
n+1}^{\infty}\, z^j(t)\,\phi_j(\theta) \,\, .
\end{eqnarray}
Now, since in the high frequencies regime the (weak) solutions to \eqref{linear dirichlet-coniugato} can be obtained as critical points of the coercive and weakly lower semicontinuous energy defined in \eqref{energia-alte-frequenze},
it turns out that the second sum in \eqref{decomposizione-z} is identically equal to zero. 

\medskip

To obtain the same result for the low frequencies, we start with the case $j=1, \ldots n$ and
we note that the Fourier coefficients $z^j$ can be written as a linear combination of $\Phi^{j,+}_{\eta}\,\phi_j(\theta)$ and $\Phi^{j,-}_{\eta}\,\phi_j(\theta)$, where the conjugated Jacobi fields $\Phi^{j,\pm }_{\eta}$ are defined as $\Phi^{j,\pm }_{\eta}\,:=\,h^{(k-1)/2} \Psi^{j,\pm}_\eta$. Hence, there exist real numbers $A_j$ and $B_j $ such that 
\begin{equation*}
\label{wlowfreq1}
z^{j}(t)\,=\,h^{(k-1)/2}(t)\,\left\{ A_j\big[ \tfrac{n-2k}{2k} v_{D,\eta} (t) Ê\, + \, \dot{v}_{D,\eta} (t) \, \big] \, e^t Ê\\
+B_j \big[ \tfrac{n-2k}{2k} v_{D,\eta} (t) Ê\, - \, \dot{v}_{D,\eta} (t) \, \big]\,e^{-t}\right\}.
\end{equation*} 
We are going to show that the homogeneous Dirichlet boundary conditions together with our choice of $R$ imply that all the $z^{j}$ must vanish for every $j=1,\ldots,n$. From the null boundary condition we deduce that 
\begin{eqnarray*}
\label{condizioneAB}
\left\{
  \begin{split}
(A_j + B_j)\,\, \Big[\tfrac{n-2k}{2k}\, +\,\frac{\dot{v}_{D,\eta} (R)}{v_{D,\eta} (R)}\tanh{(R)} \Big] \,&=&\,0,\\
(A_j - B_j)\, \, \Big[\tfrac{n-2k}{2k}\tanh{(R)}\, +\,\frac{\dot{v}_{D,\eta} (R)}{v_{D,\eta} (R)} \Big] \,&=&\,0,
\end{split}
\right.
\end{eqnarray*}
where we have used the fact that $v_{D,\eta}(\cdot)$ is an even function. Now, using the fact that $R$ is of the form
$R\,=\,\tilde{m}T_\eta + \tilde{r}$, it is sufficient to choose $\tilde m$ large enough and $\tilde r$ small enough in order to insure that 
  \begin{eqnarray*}
\left\{
  \begin{split}
\tfrac{n-2k}{2k}\, +\,\frac{\dot{v}_{D,\eta} (R)}{v_{D,\eta} (R)}\tanh{(R)}  \,&\neq &\,0,\\
\tfrac{n-2k}{2k}\tanh{(R)}\, +\,\frac{\dot{v}_{D,\eta} (R)}{v_{D,\eta} (R)}  \,&\neq &\,0.
\end{split}
\right.
\end{eqnarray*}
Hence, $A_j =0 =B_j$, for every $j=1, \ldots,n$.

\medskip

%
 
It remains to prove that also $z^0\,\equiv\,0$. Now, $z^0$ has this form 
$$
z^0(t) \,\, = h^{(k-1)/2}(t)\,\, \left\{A_0 \, \Psi_\eta^{0,-}(t)  + \, B_0\, \Psi_\eta^{0,+}(t)\right\} \,\,, 
$$
for suitable $A_0$ and $B_0$ in $\R$. From the homogeneous boundary conditions combined with the fact that $\Psi^{0,+}_\eta$ is odd and $\Psi_\eta^{0,-}$ is even, we deduce at once that if $R$ is chosen as in the statement both $A_0$ and $B_{0}$ must vanish.

\medskip

Hence, the problem \eqref{DIRICHLET-cilindro-finito} is well posed. On the other hand the a priori estimate \eqref{stima-dominio-compatto} is a direct consequence of the standard elliptic regularity theory.
\end{proof}

\section{Global linear analysis}
\label{global-linear-analysis}

The aim of this section is to provide existence, uniqueness and {\em a priori} estimates for solutions to the linear problem
\be\label{eq:global}
\mathbb{L}(u_{\e} , \bg)\,[w] &=& f \hs \hbox{in} \,\,\, M_\e.
\ee
For sake of simplicity we just consider the case where $M_\e$ is the connected sum of two Delaunay type solution $M_\e = D_{\eta_1} \sharp_\e D_{\eta_2}$. All the arguments can be trivially adapted to the general case $M_{\e} = D_{\eta_1} \sharp_\e \ldots \sharp_\e D_{\eta_N}$.

\medskip

To introduce the correct functional framework for the global linear analysis on $M_\e$ we define
\begin{eqnarray}
\label{def-norma-holder}
\nor{u}{C^{m,\beta}_{\d,\gamma}(M_\e)}\,\,\,:=\,\,\,
\nor{u}{C^{m,\beta}_\d(D_{\etu} \setminus B(p_1,1))}  + \, 
\nor{u}{C^{m,\beta}_\d(D_{\etd} \setminus B(p_2,1))}
+  \, \nor{u}{C^{m,\beta}_{\gamma}(N_\e)} \,\,,
\end{eqnarray}
where the first two norms are defined as in Subsection \ref{linearcyl} and the norm in the neck region $N_\e$ 
is defined by
\begin{eqnarray}
\label{norma-neck}
\nor{u}{C^{m,\beta}_{\gamma}(N_\e)}  & := & 
\sup_{N_\e} Ê\,\,\, \hbox{$\sum_{j=1}^m$} (\e\cosh t)^{\gamma}
Ê\,|\nabla^j u | \, (t, \theta) 
\,\, \\
& &+  \,\, \sup_{ t \,\in \,(\log \e, -\log \e) } Ê\,\, (\e\cosh t)^{\gamma}\, [\,u\,]_{C^{m,\b}(\,(t-1, t+1) \times \mathbb{S}^{n-1}\,)} \,\, . \nonumber
\end{eqnarray} 
Note that, again, $|\,\cdot\,|$ and $\nabla$ are respectively the norm and the Levi-Civita connection of the cylindrical metric $g_{cyl}$ whereas $[\,\cdot\,]$ stands for the usual H\"older seminorm.
As a consequence, we introduce the following weighted H\"older space
\begin{eqnarray*}
C^{m,\beta}_{\d,\gamma} (M_\e) & := & \{ u \in C^{m,\beta}_{loc}(M_\e) \,\, : \,\, \nor {u}{C^{m,\beta}_{\d,\gamma}(M_\e)} 
< Ê+\infty Ê Ê\} \,\, .\end{eqnarray*}

\subsection{Global and uniform {\em a priori} estimates on $M_\e$}

We recover from \cite{cat-maz} the following removable singularities result
\begin{lem}[Removable singularities]\label{removable_lem} Let $g=(1+b)^{\frac{4k}{n-2k}}g_{\mathbb{R}^{n}}$ be a conformally flat metric defined on a geodesic ball $B(p,1)$ verifying the equation
$$
\sigma_{k}(g^{-1}A_{g})\,\, = \,\, 2^{-k}\hbox{${n \choose k}$} \,\, .
$$ Suppose $\bar{w}$ satisfies in the sense of distributions
$$
\mathbb{L}(1+b ,g_{\mathbb{R}^n})[\bar{w}] \,\, = \,\, 0\quad\hbox{on}\quad B^{*}(p,1)$$ 
with $|\bar{w}(q)|\leq C |dist_{g}(q,p)|^{-\mu}$ for any $q\in B^{*}(p,1)$ for some positive constant $C>0$ and for some weight parameter $0<\mu<n-2$. Then $\bar{w}$ is a bounded smooth function on $B(p,1)$ and satisfies the equation above on the entire ball. 
\end{lem}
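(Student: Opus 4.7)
The plan is to treat this as a classical removable singularity result for a linear uniformly elliptic second-order equation with smooth coefficients, using the standard cutoff/capacity argument with the decay rate $\mu < n-2$ providing the exact scaling. First, I would unpack the structure of $\mathbb{L}(1+b, g_{\mathbb{R}^n})$. Since $g = (1+b)^{4k/(n-2k)} g_{\mathbb{R}^n}$ lies in $\Gamma_k^+$ and solves $\sigma_k(g^{-1}A_g) = 2^{-k}\binom{n}{k}$, interior elliptic regularity for fully nonlinear equations makes $b$ smooth on $B(p,1)$; the linearization therefore has the local form
\begin{equation*}
\mathbb{L}(1+b, g_{\mathbb{R}^n})[w] \,\, = \,\, a^{ij}(x) \, \partial^2_{ij} w \, + \, b^i(x) \, \partial_i w \, + \, c(x) \, w,
\end{equation*}
with smooth coefficients and $(a^{ij})$ positive definite on any compact subset of $B(p,1)$; ellipticity follows from $g \in \Gamma_k^+$ via the standard Newton inequalities for $\sigma_k'$ evaluated on the positive cone.

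Second, I would promote $\bar w$ to a distributional solution of $L \bar w = 0$ on the full ball $B(p,1)$ by a cutoff argument. Write $L := \mathbb{L}(1+b, g_{\mathbb{R}^n})$ and let $L^*$ denote its formal adjoint, which is again a smooth-coefficient second-order operator. Fix $\phi \in C^\infty_c(B(p,1))$ and a radial cutoff $\chi_\rho$ vanishing on $B(p,\rho)$, equal to $1$ outside $B(p,2\rho)$, with $|\nabla^j \chi_\rho| \le C \rho^{-j}$ for $j=1,2$. Since $\chi_\rho \phi \in C^\infty_c(B^*(p,1))$, the distributional hypothesis yields
\begin{equation*}
0 \,\, = \,\, \int \bar w \, L^*(\chi_\rho \phi) \, dx \,\, = \,\, \int \bar w \, \chi_\rho \, L^* \phi \, dx \, + \, \int \bar w \, E_\rho \, dx,
\end{equation*}
where $E_\rho$ collects all terms containing derivatives of $\chi_\rho$. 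The error $E_\rho$ is supported in the annulus $A_\rho := B(p,2\rho) \setminus B(p,\rho)$ and obeys $|E_\rho| \le C(\phi) \, \rho^{-2}$. Using $|\bar w(q)| \le C \, d(q,p)^{-\mu}$ on $A_\rho$ one estimates
\begin{equation*}
\bigl| \textstyle\int \bar w \, E_\rho \, dx \bigr| \,\, \le \,\, C \, \rho^{-2} \cdot \rho^{-\mu} \cdot \rho^n \,\, = \,\, C \, \rho^{n-2-\mu} \,\, \longrightarrow \,\, 0
\end{equation*}
as $\rho \to 0^+$, precisely because $\mu < n-2$. The same pointwise bound forces $\bar w \in L^1_{\rm loc}(B(p,1))$ (since $\mu < n$), so $\int \bar w \, \chi_\rho \, L^* \phi \to \int \bar w \, L^* \phi$ by dominated convergence. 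Hence $L \bar w = 0$ in $\mathcal{D}'(B(p,1))$.

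Finally, since $L$ is a linear elliptic operator with smooth coefficients and $\bar w \in L^1_{\rm loc}(B(p,1))$ is a distributional solution of $L \bar w = 0$ on the whole ball, interior elliptic regularity (hypoellipticity) gives $\bar w \in C^\infty(B(p,1))$; in particular $\bar w$ is bounded on compact subsets and satisfies the equation in the classical sense on the entire ball. The main technical point is the scaling in the cutoff step: the exponent $\mu < n-2$ is the sharp threshold coming from the $2$-capacity of a point in $\mathbb{R}^n$, and it is exactly this gap that makes $\int \bar w \, E_\rho$ vanish in the limit. The first- and zeroth-order pieces of $L^*$ contribute error terms with at most one derivative of $\chi_\rho$, so they scale as $\rho^{n-1-\mu}$ or $\rho^{n-\mu}$, which are strictly better; the binding constraint is imposed entirely by the second-order coefficient.
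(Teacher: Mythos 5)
Your argument is correct: the cutoff/capacity computation with error $O(\rho^{n-2-\mu})$, the $L^1_{\rm loc}$ integrability from $\mu<n$, and hypoellipticity of the smooth-coefficient elliptic operator together give exactly the claimed extension, and the threshold $\mu<n-2$ enters precisely where you say it does. The paper itself gives no proof of this lemma --- it is quoted from the companion work \cite{cat-maz} --- and your argument is the standard one used there; the only point worth flagging is that ellipticity of $\mathbb{L}(1+b,g_{\mathbb{R}^n})$ requires the (implicit in context) admissibility $g\in\Gamma_k^+$, which you correctly identify as the needed hypothesis.
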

We are now in the position to prove the following $\e$-uniform {\em a priori} estimate for solutions to \eqref{eq:global}.
\begin{pro}
\label{stime:local2} 
Suppose that $1<\d< \bar \d \,(n,k)
$, Ê$ 0 < \gamma < {(n-2k)}/{k}$ and let $w\in C^{2,\beta}_{-\d,\gamma-\frac{n-2k}{2k}} (M_\e)$ and 
$f\in C^{0,\beta}_{-\d,\gamma-{(n-2k)}} (M_\e)$ be two functions satisfying 
\begin{eqnarray*}
\mathbb{L}(u_{\e},\bg)\,[w] &= & f \hs  \hbox{in} \,\,\, M_\e \,\, .
\end{eqnarray*}
Then there exist $C=C(n,k,\b,\gamma, \d)>0$ and $\e_{0}=\e_{0}(n,k,\gamma , \d)$ such that, for every $\e\in(0,\e_{0}]$, we have
\be
\label{stima-globale-1}
\Vert w\Vert_{C^{2,\b}_{-\d,\gamma-\frac{n-2k}{2k}}} \,\, \leq \,\, ÊC \, \,\Vert f\Vert_{C^{\,0, \b}_{ -\d, \gamma - (n-2k)}}\,.
\ee
\end{pro}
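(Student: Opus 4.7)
The plan is to argue by contradiction and blow-up. Suppose \eqref{stima-globale-1} fails; then there exist sequences $\e_i\searrow 0$, $w_i\in C^{2,\b}_{-\d,\g-(n-2k)/(2k)}(M_{\e_i})$, and $f_i\in C^{0,\b}_{-\d,\g-(n-2k)}(M_{\e_i})$ with $\mathbb{L}(u_{\e_i},\bg)[w_i]=f_i$, $\|w_i\|=1$, and $\|f_i\|\to 0$. By interior Schauder estimates (applied on balls of unit cylindrical radius in the Delaunay ends and on $\e_i$-scaled balls in the neck), it suffices to rule out sequences of points $q_i\in M_{\e_i}$ at which the weighted $C^0$-norm of $w_i$ is attained up to a factor $1/2$. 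The argument then proceeds case-by-case according to the asymptotic position of $q_i$.

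Three regimes must be excluded. \emph{(a) Delaunay end.} If $q_i$ remains in a compact subset of $D_{\eta_j}\setminus\{p_j\}$ for some $j\in\{1,2\}$, then after noting that in normal coordinates $u_1(x)=|x|^{(n-2k)/(2k)}$ is precisely the conformal factor relating the Delaunay cylindrical metric to its normal-coordinate expression (so that \eqref{def-approx-conf} consistently extends the operator across $B(p_j,1)\setminus\{p_j\}$ as $\e_i\to 0$), $w_i$ converges in $C^{2,\b}_{loc}$ to a nontrivial $w_\infty$ solving $\mathbb{L}(v_{D,\eta_j},g_{cyl})[w_\infty]=0$ on $D_{\eta_j}\setminus\{p_j\}$ with decay $(\cosh t)^{-\d}$ at both cylindrical infinities. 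The isolated singularity at $p_j$ is removable by Lemma \ref{removable_lem}, and the resulting global solution contradicts Lemma \ref{inj.} since $\d>1$. \emph{(b) Transition.} If $q_i$ lies in the neck but $|t(q_i)|\to\infty$, the translation $t\mapsto t-t(q_i)$ together with the matching of $u_{\e_i}$ with $u_j$ reduces us to regime (a). \emph{(c) Central neck.} If $|t(q_i)|$ stays bounded, invoke the conformal equivariance \eqref{confequilin} with factor $u_{\e_i}$ and renormalize $w_i$ by the power of $\e_i$ dictated by the neck weight $\g-(n-2k)/(2k)$. Using $u_1+u_2=2\e_i^{(n-2k)/(2k)}\cosh(\tfrac{n-2k}{2k}t)$, the rescaled background converges locally uniformly to a multiple of the $\sigma_k$-Schwarzschild profile $v_\Sigma$ of Proposition \ref{scharz}, and the limit $w_\infty$ is a nontrivial solution of $\mathbb{L}^0(v_\Sigma,g_{cyl})[w_\infty]=0$ on $\R\times\mathbb{S}^{n-1}$ (Lemma \ref{lin-scharz}), bounded by $(\cosh t)^{\g-(n-2k)/(2k)}$. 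Separation of variables yields indicial exponents $\pm\mu_j=\pm\sqrt{\tfrac{n-k}{k(n-1)}\lambda_j+(\tfrac{n-2k}{2k})^2}$ on the $j$-th spherical mode; since $0<\g<(n-2k)/k$ forces $|\g-(n-2k)/(2k)|<(n-2k)/(2k)=\mu_0\leq\mu_j$ for every $j\geq 0$, the admissible growth lies strictly inside the coercive gap and $w_\infty\equiv 0$, a contradiction.

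Exhausting the three regimes yields the uniform estimate for all $\e\in(0,\e_0]$ with $\e_0$ small. The main obstacle is regime (c): the conformal-equivariance bookkeeping must be arranged so that the approximate neck conformal factor converges precisely to $v_\Sigma$, the linearized operator on the limit must be identified with the Schwarzschild one of Lemma \ref{lin-scharz}, and the window $0<\g<(n-2k)/k$ must be recognized as exactly the one placing the admissible growth strictly inside the indicial gap of $\mathbb{L}^0(v_\Sigma,g_{cyl})$. The sharpness of this window is the geometric reason for the explicit upper bound on $\g$ appearing in the hypothesis.
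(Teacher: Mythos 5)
Your strategy coincides with the paper's: a contradiction/blow-up argument, with the compact Delaunay regime handled by removable singularities (Lemma \ref{removable_lem}) plus injectivity on the full Delaunay cylinder (Lemma \ref{inj.}), and the neck regimes handled by rescaled limits (the paper delegates these to \cite{cat-maz} rather than re-deriving them). Your identification of the window $0<\g<(n-2k)/k$ with the indicial gap of the Schwarzschild-linearized operator of Lemma \ref{lin-scharz} is correct and is indeed the reason for the hypothesis on $\g$.

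There is, however, one genuine gap in your case analysis. Each $D_{\eta_j}\setminus B(p_j,1)$ is itself non-compact: besides the boundary sphere glued to the neck, it carries complete periodic Delaunay ends $|r_j|\to+\infty$. Your trichotomy (compact subset of a Delaunay end; neck with $|t(q_i)|\to\infty$; neck with $|t(q_i)|$ bounded) omits the regime in which the near-maximum points $q_i$ escape to infinity along one of these ends. This cannot be dismissed: the quantity being maximized there is $(\cosh r_j)^{\d}|w_i|$, and nothing prevents its supremum from being attained arbitrarily far out. Moreover the natural fix --- translate by $r_j(q_i)$, use periodicity of $v_{D,\eta_j}$, and pass to a limit --- does not land you back in Lemma \ref{inj.}: after recentering and renormalizing by the weight, the limit is only bounded by $e^{-\d t}$ on the whole line, hence may grow like $e^{\d|t|}$ at the end you translated away from, so it need not belong to $C^{2,\b}_{-\d}(D_{\eta_j})$ and the injectivity lemma does not apply directly. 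One must instead rule this out by the indicial/ODE analysis of solutions decaying like $e^{-\d t}$ at one end with growth at most $e^{\d|t|}$ at the other, using $1<\d<\bar\d(n,k)$; this is precisely what the half-cylinder Dirichlet estimates of Proposition \ref{APER} and \eqref{apepm} encode, and it is how the paper reduces the subcase ``$q_i$ leaves every compact set of $D_{\eta_1}$'' to the compact one. Your proof needs this regime added explicitly.
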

\begin{proof} 
Before starting the proof, we advise the reader that we will prove \eqref{stima-globale-1} using a different, albeit equivalent, norm. 
With a little abuse of notation, we introduce the norm
\begin{eqnarray*}
\label{norma-equivalente}
\nor{u}{C^{m,\beta}_{-\d,\gamma}(M_\e)}\,\,\,:=\,\,\,
\max\left\{\nor{u}{C^{m,\beta}_{-\d}(D_{\etu} \setminus B(p_1,1))}, \, 
\nor{u}{C^{m,\beta}_{-\d}(D_{\etd} \setminus B(p_2,1))},
  \, \nor{u}{C^{m,\beta}_{\gamma}(N_\e)}\right\} \,\,,
\end{eqnarray*} 
which is clearly equivalent to \eqref{def-norma-holder}. We will just provide the uniform weighted $C^0$--bound
\begin{eqnarray*}
\label{C0bound} 
\Vert w\Vert_{C^{0}_{-\d,\gamma - \frac{n-2k}{2k}}} \,\, \leq  \,\,ÊC \, \,\Vert f\Vert_{C^{\,0}_{- \d, \gamma - (n-2k)}}\,\,,
\end{eqnarray*}
since the uniform weighted $C^{2,\b}$--bound will follows easily from standard scaling argument, see \cite{pacard}. To prove the above estimate
 we argue by contradiction. Suppose that there exists a sequence $(\e_{i}, w_{i}, f_{i})$, $i \in \mathbb{N}$, such that
\begin{itemize}
\item $\e_{i} \, \longrightarrow Ê\, 0, \quad \mbox{ as } i\rightarrow + \infty,$
\item $\Vert w_{i}\Vert_{C^{0}_{-\d,\gamma-\frac{n-2k}{2k}}} \equiv 1 ,\quad \,\, i \in \mathbb{N}$, 
\item $ \Vert f_{i}\Vert_{C^{\,0}_{-\d, \gamma Ê- (n-2k)}} \longrightarrow 0 ,\quad \mbox{ as } i\rightarrow + \infty$
\end{itemize}
and 
\bea
\mathbb{L}(u_{\e_{i}},\bg)\,[w_{i}] &=& f_{i} \hs \hbox{in} \,\, M_\e.
\eea
Now, up to a not relabelled subsequence of $i$, one has to face with the following two cases:
\begin{enumerate}
\item[\bf{1.}] \,\,$\nor{w_i}{C^{0}_{-\d}(D_{\eta_{j}}\setminus B(p_j,1))} = 1, \,\,\, \hbox{ for any }Êi\in \mathbb{N}$ \,\, \hbox{and for $j=1$ or $2$}.
\item[\bf{2.}] \,\,$\nor{w_i}{C^{0}_{\gamma}(N_\e)} =1 \,\,\,\hbox{for any } i\in \mathbb{N}$.
\end{enumerate}
The second case has been fully analysed in
\cite[Proposition 4.4, case 2. and case 3.]{cat-maz} to which we refer for the 
details. 
Concerning the first one, we note that there is no loss of generality
in restricting the analysis only to $D_{\etu}$ (recall that on $D_{\etu}$ we use coordinates $r_1$ and $\t$, according to Section \ref{s:as}). Secondly, it is natural to split the case 1 into two subcases. The first subcase appears when there exists $M\,>\,0$ and
a subsequence of points $q_i=(r_i, \theta_i)$'s such that
 $r_i\in [-M,M]
\times \mathbb{S}^{n-1}$ 
and $(\cosh r_i)^\d|w_i| (r_i, \theta_i)\ge 1/2$. The second subcase is when the points $q_i$'s leave every compact set of $D_{\eta_1}$. 
However, it follows from \eqref{apepm} that this second subcase can always be reduced to the first one. The assumption $q_i\in [-M,M \,]
\times \mathbb{S}^{n-1}$ implies that, up to a not relabelled subsequence,
there holds that $q_i \rightarrow q_\infty$, $w_i\rightarrow w_\infty$ in 
$C^{2}_{loc}(D_{\etu}\setminus \left\{p_1\right\})$ and $f_i\rightarrow 0$ in Ê
$C^{0}_{loc}(D_{\etu}\setminus \left\{p_1\right\})$. Thus, the function $w_\infty$ is nontrivial, since $|w_\infty| (q_\infty) \geq (\cosh M)^{-\d} / 2$, and solves in the sense of distributions the limit problem 
\begin{eqnarray*}
\label{eq:limit1}
\mathbb{L}(1,g_1) \,\, [\,  u_1^{-1} \, w_{\infty}] \,\, = \,\, 0\quad\quad \hbox{in}\,\,\,D_{\eta_1}\setminus\{p_{1}\}  \quad & \,\,\hbox{with} & \quad \nor{\, w_\infty}{C^0_{-\d} (D_{\eta_1} \setminus B(p_1,1)\,)} \,\, = \,\, 1 \,\, ,
\end{eqnarray*}
where $u_1$ is the function used in the construction of the approximate solutions (see Section \ref{s:as}), which we have set to be equal to $1$ in $D_{\etu} \setminus B(p_1,1)$. If we show that the limit problem is solved by $ u^{-1}_1 \, w_\infty$ on the whole $D_{\etu}$, then, using Lemma \ref{inj.} (which is in force thanks to the fact that $1<\d$), we will reach the desired contradiction. To remove the singularity at $p_1$, we observe that on $B(p_1,1)$ we can always write  
$$
g_{1}=(1+b_{1})^{\frac{4k}{n-2k}}g_{\R^{n}},
$$ 
with $b_{1}(q)=\bigo{|dist_{g_{1}}(q,p_{1})|^{2}}$. Hence, thanks to the conformal equivariance property \eqref{confequilin}, we have that the limit equation can be rewritten as 
$$
\mathbb{L} \, \big( \,(1+b_{1}) \, , g_{\R^n} \big) \,\, [(1+b_{1})u_{1}^{-1}w_{\infty}] \,\, = \,\, 0\quad\hbox{on}\quad B(p_{1},1) \setminus \{ p_1\} \,\, .
$$ 
Recalling that $g_{1}$ solves the $\sigma_{k}$--Yamabe equation and that 
$$
|(1+b_{1})u_{1}^{-1}w_{\infty}|(q) \,\, \leq \,\, C \,\,|dist_{g_{1}}(q,p_{1})|^{-\g} \,\, ,
$$
we can apply Lemma \ref{removable_lem} to obtain that $u_1^{-1} w_{\infty}$ extends through $p_{1}$ to a nontrivial smooth solution of 
\begin{eqnarray*}\label{contraddiction}
\mathbb{L}\, ( 1 \,, g_1) \,\, [u_{1}^{-1}w_{\infty}] \,\, = \,\, 0\quad\hbox{in}\quad D_{\etu} \,\, .
\end{eqnarray*}
This completes the proof.
\end{proof}
As a consequence of the {\em a priori} estimates, we obtain the following
\begin{cor} 
\label{INJ}
Suppose that $1<\d< \bar \d \,(n,k)$
andÊ$ \,\,0 < \gamma < {(n-2k)}/{k}$, then there exists a positive real number $\e_{0}(n,k,\gamma , \d) >0 $ such that, for every $\e\in(0,\e_{0}]$ the operator
\begin{eqnarray*}
\mathbb{L}(u_{\e},\bg) \,\, : \,\, C^{2,\beta}_{-\d,\gamma-\frac{n-2k}{2k}} (M_\e)   &  \longrightarrow & \,\, C^{0,\beta}_{-\d,\gamma-{(n-2k)}} (M_\e)
\end{eqnarray*}
is injective.
\end{cor}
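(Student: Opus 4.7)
The corollary is a direct formal consequence of the uniform a priori estimate established in Proposition \ref{stime:local2}, so no additional analysis is required: the substantive work (the blow-up/contradiction argument, the removable-singularities Lemma \ref{removable_lem}, and the use of Lemma \ref{inj.} to rule out nontrivial limits on the Delaunay ends) has already been carried out to obtain the estimate itself. The plan is therefore simply to apply the estimate with zero right-hand side.

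Concretely, I would fix $\delta \in (1, \bar\delta(n,k))$ and $\gamma \in (0, (n-2k)/k)$ and let $\e_0 = \e_0(n,k,\gamma,\delta) > 0$ be the threshold produced by Proposition \ref{stime:local2}. For any $\e \in (0, \e_0]$, suppose that $w \in C^{2,\beta}_{-\delta,\gamma - \frac{n-2k}{2k}}(M_\e)$ lies in the kernel of $\mathbb{L}(u_\e, \bar g)$. Then $f := \mathbb{L}(u_\e, \bar g)[w] \equiv 0$, which is trivially an element of $C^{0,\beta}_{-\delta,\gamma-(n-2k)}(M_\e)$ with vanishing norm. Plugging this into the a priori estimate \eqref{stima-globale-1} gives
$$
\|w\|_{C^{2,\beta}_{-\delta,\gamma-\frac{n-2k}{2k}}(M_\e)} \,\leq\, C\,\|f\|_{C^{0,\beta}_{-\delta,\gamma-(n-2k)}(M_\e)} \,=\, 0,
$$
so $w \equiv 0$ on $M_\e$, proving injectivity.

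There is no genuine obstacle here: the ranges of $\delta$ and $\gamma$ and the threshold $\e_0$ are inherited verbatim from Proposition \ref{stime:local2}. The only point worth double-checking is the compatibility of the function spaces on the two sides of $\mathbb{L}(u_\e,\bar g)$ and the matching of the parameter ranges claimed in the corollary with those under which the estimate has been proved, both of which hold by inspection of the definitions \eqref{def-norma-holder} and \eqref{norma-neck}.
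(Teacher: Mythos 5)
Your proof is correct and is exactly the argument the paper intends: the corollary is stated as an immediate consequence of the a priori estimate of Proposition \ref{stime:local2}, obtained by applying \eqref{stima-globale-1} with $f=0$. No further comment is needed.
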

The next task is to provide surjectivity for $\mathbb{L}(u_\e, \bg)$. Unfortunately, with this choice of the weight parameter, which will turn out to be suitable for the nonliner analysis, the surjectivity cannot be recovered. The duality theory would suggest the use of the spaces with weight parameter $\d$ insted of $-\d$, but as it is remarked in \cite{mpu}, these spaces are definitely too large. In particular they contain functions which may grow too fast at $\pm \infty$ and even worst which are not {\em integrable}, in the sense which is made precise below. To overcome this difficulty, one can take advantage of the Fredholm character of the operator (which is actually the case, when $\d$ is not an indicial root) by considering a finite dimensional extension of the domain,  the so called {\em deficiency space}. Of course, there are several different options for the choice of such a space (for example a different approach is contained in \cite{mpu}). We start by introducing the following spaces
\begin{eqnarray*}
W(D_{\etu,\Ru})  := \,  {\rm span} \, \{ \, \chi_{\Ru '} \,\Psi^{j, \pm}_{\eta_1}  \, : \, j = 0, \ldots, n \, \} & \hbox{and} & 
W(D_{\etu,-\Ru})  := \,  {\rm span} \, \{ \, \chi_{-\Ru'} \,\Psi^{j, \pm}_{\eta_1}  \, : \, j = 0, \ldots, n \, \} \\
W(D_{\etd,\Rd})  := \,  {\rm span} \, \{ \, \chi_{\Rd'} \,\Psi^{j, \pm}_{\eta_2}  \, : \, j = 0, \ldots, n \, \}  & \hbox{and} & 
W(D_{\etd,-\Rd})  := \,  {\rm span} \, \{ \, \chi_{-\Rd'} \,\Psi^{j, \pm}_{\eta_2} \, : \, j = 0, \ldots, n \, \}  \\
\mathcal{W}^{+}(D_{\etu,+ \Ru})  := \,  {\rm span} \, \{ \, \chi_{+ \Ru'} \,\Psi^{j, +}_{\eta_1}  \, : \, j = 0, \ldots, n \, \}  & \hbox{and} & 
\mathcal{W}^+(D_{\etd,+ \Rd})  := \,  {\rm span} \, \{ \, \chi_{+ \Rd'} \,\Psi^{j, +}_{\eta_2}  \, : \, j = 0, \ldots, n \, \} 
\\
\mathcal{W}^{-}(D_{\etu,- \Ru})  := \,  {\rm span} \, \{ \, \chi_{- \Ru'} \,\Psi^{j, -}_{\eta_1}  \, : \, j = 0, \ldots, n \, \}  & \hbox{and} & 
\mathcal{W}^-(D_{\etd,- \Rd})  := \,  {\rm span} \, \{ \, \chi_{- \Rd'} \,\Psi^{j, -}_{\eta_2}  \, : \, j = 0, \ldots, n \, \} 
\end{eqnarray*}
where $\chi_{\Ru'}$ is a non decreasing smooth cut-off function defined on $D_{\eta_1}$ which is identically equal to $1$ for $t \geq \Ru'$ and which vanish for $t \leq \Ru'-1$ (with $\Ru'-1 >\Ru$. The cut-off function $\chi_{-\Ru '}$ is defined by the relationship $\chi_{-\Ru'}(r) \, = \, \chi_{\Ru'}(-r)$, and $\chi_{\Rd'}$ and $\chi_{-\Rd'}$ are defined on $D_{\eta_2}$ in an analogous fashion. Moreover, the parameters $\Ru'$ and $\Rd'$ are chosen in such a way that $\Ru' -1>\Ru$ and $\Rd' -1 > \Rd$, in order to apply the analysis of the previous section. 

\medskip

We observe that all the functions in these spaces are {\em integrable} at $\pm \infty$ in the sense that they are linear combinations of infinitesimal generators of families of conformal variations of the original Delaunay type solutions $v_{D, \eta_1}$ and $v_{D, \eta_2}$. This fact has an important geometrical meaning which will be made clear later and which will be used in the nonlinear framework to justify the choice of a correction $w$ with components lying in these spaces. To continue, we define the full deficiency space $\overline{W}(M_\e)$ by
\begin{eqnarray*}
\overline W(M_\e) & := & W(D_{\etu,\Ru}) \oplus W(D_{\etu,-\Ru}) \oplus W(D_{\etd,\Rd}) \oplus W(D_{\etd,-\Rd})  
\,\,. 
\end{eqnarray*}
We notice that since all of these spaces are finite dimensional, we can choose any norm on them. To be definite we always consider the norm given by the sum of the absolute value of the components.

\medskip

The importance of these spaces for the linear analysis is clarified by the following Proposition, which can be deduced combining Corollary \ref{INJ} with \cite[Theroem 12.4.2]{pacard},
\begin{pro}
\label{pac nostrum}
Suppose that $1<\d< \bar \d \,(n,k)$
andÊ$ \,\,0 < \gamma < {(n-2k)}/{k}$, then there exists a positive real number $\e_{0}(n,k,\gamma , \d) >0 $ such that, for every $\e\in(0,\e_{0}]$ the operator
\begin{eqnarray*}
\mathbb{L}(u_{\e},\bg) \,\, : \,\, C^{2,\beta}_{-\d,\gamma-\frac{n-2k}{2k}} (M_\e)  \, \oplus \, \overline{{W}} (M_\e)  &  \longrightarrow & \,\, C^{0,\beta}_{-\d,\gamma-{(n-2k)}} (M_\e)
\end{eqnarray*}
is surjective and 
$$
 {\rm dim \,\, Ker}\, \,\mathbb{L}\, (u_\e, \bg) \, \,\, =  \,\, \tfrac{1}{2} \,\, {\rm dim} \, \overline{W} (M_\e) \,\, = \,\, 4\,(n+1) \,\, .
$$
\end{pro}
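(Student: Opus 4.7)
The plan is to invoke the general Fredholm theory for elliptic operators on asymptotically cylindrical manifolds (\cite[Theorem 12.4.2]{pacard}), combined with the injectivity provided by Corollary \ref{INJ}. For brevity, I let $\mathbb{L}_{\pm\delta}$ denote $\mathbb{L}(u_\e,\bg)$ acting between the weighted spaces $C^{2,\beta}_{\pm\delta,\gamma-\frac{n-2k}{2k}}(M_\e)$ and $C^{0,\beta}_{\pm\delta,\gamma-(n-2k)}(M_\e)$.

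First, I would verify the abstract Fredholm hypotheses. Since $u_\e$ coincides with $v_{D,\eta_i}$ on $D_{\eta_i}\setminus B(p_i,1)$ and is a small perturbation of a Schwarzschild-type conformal factor on the neck, on each of the four Delaunay ends of $M_\e$ the operator $\mathbb{L}(u_\e,\bg)$ is asymptotic to the model $\mathbb{L}(v_{D,\eta_i},g_{cyl})$ of Lemma \ref{Linear-Del}. The Jacobi fields of Section \ref{jacobi fields} identify the indicial roots of this model at each end: $\delta_{0,\eta}=0$ has multiplicity $2$ (generated by $\Psi^{0,\pm}_{\eta}$), the roots $\pm 1$ have multiplicity $n$ each (generated by $\Psi^{j,\mp}_{\eta}$, $j=1,\ldots,n$), and every further indicial root satisfies $|\delta_{j,\eta}|\geq\bar\delta(n,k)$ by \eqref{2nd root}. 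Since $1<\delta<\bar\delta(n,k)$, the weight $\pm\delta$ falls strictly between two consecutive indicial roots at each end, so both $\mathbb{L}_{\pm\delta}$ are Fredholm.

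Second, I would compute the Fredholm index of $\mathbb{L}_{-\delta}$. At each end, the indicial roots lying in $(-\delta,+\delta)$ are $\{-1,0,+1\}$ with total multiplicity $n+2+n=2(n+1)$, so summing over the four ends gives precisely $8(n+1)=\dim\overline{W}(M_\e)$. The Lockhart--McOwen jump formula then yields $\mathrm{ind}(\mathbb{L}_{+\delta})-\mathrm{ind}(\mathbb{L}_{-\delta})=8(n+1)$, while the formal self-adjointness of the conjugate operator $\mathcal{L}_\eta$, combined with the duality between weighted H\"older spaces of opposite weights, gives the symmetry $\mathrm{ind}(\mathbb{L}_{+\delta})=-\mathrm{ind}(\mathbb{L}_{-\delta})$. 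Hence $\mathrm{ind}(\mathbb{L}_{-\delta})=-4(n+1)$, and Corollary \ref{INJ} (providing $\ker\mathbb{L}_{-\delta}=\{0\}$ for $\e\leq\e_0$) forces $\dim\mathrm{coker}\,\mathbb{L}_{-\delta}=4(n+1)$.

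Third, I would invoke \cite[Theorem 12.4.2]{pacard} to conclude. Each generator $\chi\Psi^{j,\pm}_{\eta_i}$ of $\overline{W}(M_\e)$ lies in the kernel of the model operator away from the support of $d\chi$, so $\mathbb{L}(u_\e,\bg)[\chi\Psi^{j,\pm}_{\eta_i}]$ is essentially compactly supported (up to a term controlled by $\|u_\e-v_{D,\eta_i}\|$ on the neck) and therefore belongs to the target space $C^{0,\beta}_{-\delta,\gamma-(n-2k)}(M_\e)$. The cited theorem asserts that the inclusion of $\overline{W}(M_\e)$ in the domain projects isomorphically onto the cokernel, so the extended operator $\tilde{\mathbb{L}}:C^{2,\beta}_{-\delta,\gamma-\frac{n-2k}{2k}}(M_\e)\oplus\overline{W}(M_\e)\to C^{0,\beta}_{-\delta,\gamma-(n-2k)}(M_\e)$ is surjective, and
\[
\dim\ker\tilde{\mathbb{L}}=\mathrm{ind}(\tilde{\mathbb{L}})=\mathrm{ind}(\mathbb{L}_{-\delta})+\dim\overline{W}(M_\e)=-4(n+1)+8(n+1)=4(n+1).
\]
The most delicate point I anticipate is the careful bookkeeping of indicial multiplicities at each end and the transfer of the self-adjointness symmetry from the $L^2$ to the weighted H\"older framework, since both feed directly into the index computation; once these are in place, the proposition follows by direct application of the cited abstract Fredholm result.
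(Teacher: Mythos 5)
Your proposal is correct and follows essentially the same route as the paper, which deduces the proposition by combining the injectivity of Corollary \ref{INJ} with the abstract Fredholm/deficiency-space result \cite[Theorem 12.4.2]{pacard}; your indicial-root bookkeeping (multiplicity $2(n+1)$ per end, four ends, hence $\dim\overline{W}=8(n+1)$) and the index/duality computation are exactly the content that the cited theorem packages. The paper offers no further detail beyond that citation, so your expansion is a faithful and correct unpacking of its argument.
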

Our {\em deficiency space} is defined by
\begin{eqnarray}
\label{calibro 9}
\mathcal{W}(M_\e) & := & \mathcal{W}^+(D_{\etu,\Ru}) \oplus \mathcal{W}^-(D_{\etu,-\Ru}) \oplus \mathcal{W}^+(D_{\etd,\Rd}) \oplus \mathcal{W}^-(D_{\etd,-\Rd}) \,\, .
\end{eqnarray}
Incidentally we note that in \cite{mpu}, due to the use of a different functional framework, the {\em deficiency space} is chosen to be  
\begin{eqnarray*}
\label{calibro 45}
W(M_\e) & := & W(D_{\etu,\Ru}) \oplus W(D_{\etd,\Rd})   \,\,.
\end{eqnarray*}
As expected, ${\rm dim} \, {W} (M_\e) \, = \, 4\,(n+1) \, = \, {\rm dim} \, \mathcal{W} (M_\e)$.
%

%

%


\medskip

The remaining part of this section will be devoted to prove the core of our linear analysis, namely the following 
\begin{pro}
Suppose that $1<\d< \bar \d \,(n,k)$
andÊ$ \,\,0 < \gamma < {(n-2k)}/{k}$, then there exists a positive real number $\e_{0}(n,k,\gamma , \d) >0 $ such that, for every $\e\in(0,\e_{0}]$ the operator
\begin{eqnarray*}\label{isomorfismo-calibrato}
\mathbb{L}(u_{\e},\bg) \,\, : \,\, C^{2,\beta}_{-\d,\gamma-\frac{n-2k}{2k}} (M_\e)  \, \oplus \, \mathcal{{W}} (M_\e)  &  \longrightarrow & \,\, C^{0,\beta}_{-\d,\gamma-{(n-2k)}} (M_\e)
\end{eqnarray*}
is an isomorphism. Moreover the following $\e$-uniform a priori estimate is satisfied
\begin{eqnarray}
\label{stima-globale}
\nor{w}{C^{2,\b}_{-\d , \g - \frac{n-2k}{2k}}(M_\e)  } \,\, + \,\, \nor{w}{\mathcal{W}(M_\e)}   & \leq & C \,\,\, \nor{ \, \mathbb{L}\, (u_\e, \bg) \, [w] \, }{C^{0,\b}_{-\d, \g - (n-2k)}(M_\e) } \,\, ,
\end{eqnarray}
where the positive constant $C$ only depends on $n, k, \b , \g$ and $\d$.
\end{pro}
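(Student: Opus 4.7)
The plan is to first prove the uniform a priori estimate \eqref{stima-globale}---which instantly yields injectivity of the reduced operator---and then to recover surjectivity via Proposition~\ref{pac nostrum} by a purely algebraic dimension-counting argument. To that end, fix a splitting $\overline{W}(M_\e) = \mathcal{W}(M_\e) \oplus \mathcal{V}(M_\e)$, where $\mathcal{V}(M_\e)$ is spanned by the ``complementary'' cut-off Jacobi fields (i.e.\ the $\chi_{R_i'}\Psi^{j,-}_{\eta_i}$'s and $\chi_{-R_i'}\Psi^{j,+}_{\eta_i}$'s), so that $\dim\mathcal{V}(M_\e) = 4(n+1) = \dim\ker\mathbb{L}(u_\e,\bg)$ by Proposition~\ref{pac nostrum}.

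For the uniform estimate I would argue by contradiction, adapting the blow-up scheme of Proposition~\ref{stime:local2}. Suppose instead there is a sequence $(\e_i, w_i, f_i)$ with $\e_i\to 0$, $\|w_i\|_{C^{2,\beta}_{-\delta,\gamma-(n-2k)/(2k)}}+\|w_i\|_{\mathcal{W}(M_{\e_i})}=1$ and $\|f_i\|_{C^{0,\beta}_{-\delta,\gamma-(n-2k)}}\to 0$. Decompose $w_i=w_{0,i}+w_{1,i}$ into the base weighted part and the $\mathcal{W}$-part. The neck-concentration subcase is dealt with exactly as in \cite{cat-maz} and Proposition~\ref{stime:local2}, since the $\mathcal{W}$-contributions are supported uniformly away from the neck and play no role there. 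In the Delaunay-concentration subcase on, say, $D_{\eta_1}$, extract a subsequential limit $w_\infty$; by the removable singularity Lemma~\ref{removable_lem} applied at $p_1$, $w_\infty$ is a nontrivial element of $C^{2,\beta}_{-\delta}(D_{\eta_1})\oplus \mathcal{W}^+(D_{\eta_1,R_1})\oplus \mathcal{W}^-(D_{\eta_1,-R_1})$ lying in the kernel of $\mathbb{L}(v_{D,\eta_1},g_{cyl})$ on the full infinite Delaunay cylinder.

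The heart of the argument---which I expect to be the main obstacle---is to show that this kernel is trivial. Separating variables along the eigenfunctions $\phi_j$ of $\Delta_\theta$, the modes with $j\ge n+1$ vanish by the weighted maximum principle used in Lemma~\ref{inj.} (there is no $\mathcal{W}^\pm$-contribution in high frequencies, and $\delta<\bar\delta(n,k)\le \delta_{2,\eta_1}$). For a low mode $j\in\{0,\dots,n\}$ the homogeneous solution is $a\,\Phi^{j,+}_{\eta_1}+b\,\Phi^{j,-}_{\eta_1}$ with $\Phi^{j,\pm}_{\eta_1}=h^{(k-1)/2}\Psi^{j,\pm}_{\eta_1}$. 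At $t\to+\infty$ the only admissible asymptotic behavior is a $\mathcal{W}^+$-multiple of $\Phi^{j,+}_{\eta_1}$ modulo $(\cosh t)^{-\delta}$-decay; since $\Phi^{j,-}_{\eta_1}$ grows strictly faster than $(\cosh t)^{-\delta}$ at $+\infty$ (exponentially for $j\ge 1$ because $\delta>1=\delta_{1,\eta_1}$, and linearly for $j=0$), one reads off $b=0$ and identifies $a$ with the corresponding $\mathcal{W}^+$-coefficient. The symmetric analysis at $t\to-\infty$, where $\mathcal{W}^-$ now absorbs the $\Phi^{j,-}_{\eta_1}$-behavior, forces $a=0$. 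Hence $w_\infty\equiv 0$, contradicting the unit normalization of the sequence.

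Having established the estimate \eqref{stima-globale} (and thus injectivity), surjectivity follows algebraically. Given $f\in C^{0,\beta}_{-\delta,\gamma-(n-2k)}(M_\e)$, Proposition~\ref{pac nostrum} supplies $(w_0,w_1,w_2)\in C^{2,\beta}_{-\delta,\gamma-(n-2k)/(2k)}(M_\e)\oplus \mathcal{W}(M_\e)\oplus \mathcal{V}(M_\e)$ with $\mathbb{L}(u_\e,\bg)[w_0+w_1+w_2]=f$. The injectivity of the reduced operator means that the natural projection $\pi:\ker\mathbb{L}(u_\e,\bg)\to\mathcal{V}(M_\e)$ has trivial kernel and, matching dimensions, is an isomorphism. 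Subtracting the kernel element $\pi^{-1}(w_2)$ from $w_0+w_1+w_2$ yields a solution of $\mathbb{L}(u_\e,\bg)[\cdot]=f$ living entirely in $C^{2,\beta}_{-\delta,\gamma-(n-2k)/(2k)}(M_\e)\oplus\mathcal{W}(M_\e)$, completing the proof of the isomorphism.
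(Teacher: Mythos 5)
Your proposal is correct, but it reaches surjectivity by a genuinely different route than the paper. The shared analytic core is the triviality of the kernel of $\mathbb{L}(v_{D,\eta},g_{cyl})$ on the full Delaunay cylinder in the space $C^{2,\beta}_{-\d}(D_\eta)\oplus\mathcal{W}^+(D_{\eta,R})\oplus\mathcal{W}^-(D_{\eta,-R})$, obtained by matching asymptotics mode by mode exactly as you do; in the paper this same computation appears as the injectivity of the limit pseudodifferential operator $T-S_0$. Around that core the architectures diverge. The paper proves surjectivity \emph{first} and constructively: it decomposes the Proposition~\ref{pac nostrum} solution as $\hat u+u^\top+u^\perp$, observes that $\mathbb{L}[u^\perp]$ is supported in four annuli, and builds the correction $z$ by solving Dirichlet problems on the four half-cylinders and on the compact core $C_\e$, gluing them through a $C^1$-matching condition whose solvability is the invertibility of $T-S_\e$ (requiring the norm convergence $S_\e\to S_0$ of Lemma~\ref{convS}); the uniform estimate \eqref{stima-globale} is then assembled at the end by patching the estimates on the pieces. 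You instead prove the uniform estimate first by a single global blow-up in the extended space, read off injectivity, and then dispose of surjectivity by pure linear algebra: since $\dim\ker\mathbb{L}=4(n+1)=\dim\mathcal{V}$ and injectivity on $C^{2,\beta}_{-\d,\g-\frac{n-2k}{2k}}\oplus\mathcal{W}$ forces the projection $\pi:\ker\mathbb{L}\to\mathcal{V}$ to be injective, hence bijective, one subtracts $\pi^{-1}(w_2)$ to kill the $\mathcal{V}$-component. This is shorter and bypasses the Dirichlet-to-Neumann machinery entirely, at the price of a more delicate global blow-up; the paper's construction is longer but localizes explicitly where the smallness of $\e$ enters. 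Three points you should make explicit to close the argument: (i) $\mathcal{V}\cap\bigl(C^{2,\beta}_{-\d,\g-\frac{n-2k}{2k}}(M_\e)\oplus\mathcal{W}(M_\e)\bigr)=\{0\}$, so that $\pi$ is well defined --- this follows from the growth of $\chi_{R_i'}\Psi^{j,-}_{\eta_i}$ and $\chi_{-R_i'}\Psi^{j,+}_{\eta_i}$ (and, for $j=0$, from the fact that a nonzero multiple of the bounded oscillating field $\dot v_{D,\eta_i}$ cannot be written as a decaying function plus a multiple of the linearly growing $\Psi^{0,-}_{\eta_i}$); (ii) in the blow-up, the subcase in which the concentration points escape to infinity along a Delaunay end, which must be reduced to the compact subcase via the half-cylinder a priori estimates of Propositions~\ref{APER} and \ref{APER-} (this is the role of \eqref{apepm} in the paper); (iii) the non-triviality of the limit when the unit normalization is carried by the $\mathcal{W}$-coefficients rather than by the weighted H\"older part, which holds because the Jacobi fields are fixed functions on the ends and their coefficients converge along a subsequence.
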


\begin{proof}
First of all, Proposition \ref{pac nostrum} furnishes for any $f\in C^{0,\beta}_{-\d,\gamma-{(n-2k)}} (M_\e)$ the existence of a solution $u$ to \eqref{eq:global} in the class $C^{2,\beta}_{-\d,\gamma-\frac{n-2k}{2k}} (M_\e)  \, \oplus \, \overline{{W}} (M_\e)$. We canonically decompose $\overline{W}$ as $\overline{W}\,=\,\mathcal{W} \oplus
 \mathcal{W}^\perp$, where $\mathcal{W}^\perp$ represent the orthogonal complement of $\mathcal{W}$ in $\overline W$. Consequentely, $u$ admits the decomposition 
 \bea 
 u \,\,= \,\,\hat{u} + u^{\top} + u^{\perp},
 \eea
where $\hat{u} + u^{\top}\,\in\,C^{2,\beta}_{-\d,\gamma-\frac{n-2k}{2k}} (M_\e)  \, \oplus \, \mathcal{{W}} (M_\e)$, whereas $u^\perp \,\in\, \mathcal{W}^\perp$. The aim is thus to find a correction $z\in  C^{2,\beta}_{-\d,\gamma-\frac{n-2k}{2k}} (M_\e)  \, \oplus \, \mathcal{{W}} (M_\e)$ such that, the function $w$ defined by 
\begin{equation*}
\label{def-w}
w\,\,:=\,\,\hat{u} + u^{\top} + z,
\end{equation*}
is a solution to \eqref{eq:global}. It is clear that, as soon as we are able to produce such a correction, then the surjectivity in the smaller space $C^{2,\beta}_{-\d,\gamma-\frac{n-2k}{2k}} (M_\e)  \, \oplus \, \mathcal{{W}} (M_\e)$ will be achieved. Using the linearity of the problem and the fact that 
$u$ is already a solution, one can easily deduce that the function $z$ must satisfy
\begin{eqnarray*}
\label{correction-z}
\mathbb{L}(u_{\e} , \bg)\,[z] &=& \mathbb{L}(u_{\e} , \bg)\,[u^\perp] \hs \hbox{in} \,\,\, M_\e.
\end{eqnarray*}
A remarkable feature of problem above
is that the right hand side is supported by construction only in $D_{\etu,\pm \Ru}\, \cup \, D_{\etd,\pm \Rd}$. More precisely, recalling that $\Ru'>\Ru$ and $\Rd'>\Rd $, we have that  
$ \mathbb{L}(u_{\e} , \bg)\,[u^\perp]$ is actually supported in the four annuli $[\Ru'-1, \Ru' ]\times \mathbb{S}^{n-1}, \, [- \Ru', -\Ru' + 1]\times \mathbb{S}^{n-1},   
[\Rd'-1, \Rd']\times \mathbb{S}^{n-1}$ and $[- \Rd', -\Rd'+ 1]\times \mathbb{S}^{n-1}.$ As a consequence, we are led to solve the following kind of problem
\begin{eqnarray}
\label{dirichlet-correction1}
 \left\{
  \begin{split}
\mathbb{L}(u_\e, \bg)  \, [\,v_{1,+}] \,\,& = &\,\, \mathbb{L}(u_\e, \bg)  \, [\,u^\perp]  \,\, \text{in}
\quad D_{\etu,\Ru}\,\,,
\\
v_{1,+} \,\, & = & \,\, 0 \quad \,\quad \quad  \,\,  \quad \,\,\text{on}  \quad \partial  D_{\etu,\Ru},
  \end{split}
\right.
\end{eqnarray}
with $v_{1,+}\in C^{2,\beta}_{-\d} (D_{\etu,\Ru})  \, \oplus \, \mathcal{{W}^{+}} (D_{\etu,R})$. Analogous problems should be posed, respectively, on $D_{\etu,-\Ru}$,  $D_{\etd,\Rd}$, $D_{\etd,-\Rd}$,
leading to the construction of $v_{1,-}, v_{2,+}, v_{2,-}$. Problem \eqref{dirichlet-correction1} is, modulo the use of the conformal equivariance property \eqref{confequilin}, of the same kind of the Dirichlet problem 
\eqref{DIRICHLET}.
Thus, Proposition \ref{APER} applies giving the well posedness of
\eqref{dirichlet-correction1}. Once we have obtained these local solutions, we define the candidate correction $z$ as \begin{eqnarray}
\label{ansatz-z}
 z\,\, := \,\,
 \left\{
  \begin{split}
v_{1,+}  + \bar{v}_{1,+} \quad\text{in}
\quad & D_{\etu,\Ru}&\,\,
\\
v_{1,-}  + \bar{v}_{1,-}\quad \text{in}
\quad  & D_{\etu,-\Ru}&\,\,
\\
v_{C} \quad\quad \quad    \text{in} \quad & C_\e:=M_\e\setminus (D_{\etu,\Ru} \cup D_{\etu,-\Ru} \cup D_{\etd,\Rd} \cup D_{\etd,-\Rd})&
\\
v_{2,+}  + \bar{v}_{2,+}\quad \text{in}
\quad  &D_{\etd,\Rd}&\,\,
\\
v_{2,-}  + \bar{v}_{2,-}\quad \text{in}
\quad  &D_{\etd,-\Rd},&\,\,
  \end{split}
\right.
\end{eqnarray}
where 
$\bar{v}_{1,+}, \bar{v}_{1,-}, \bar{v}_{2,+}, \bar{v}_{2,-}, v_{C}$ are the solutions of the following problems
\be
\label{dirichlet-correction2}
\left\{
\begin{split}
\mathbb{L}(u_{\e},\bg)\,[\bar{v}_{1,+}]&= 0 \hs D_{\etu,\Ru}\\
\bar{v}_{1,+} &= \psi_{1,+} \,\,\,\,\,\,\,\,\partial D_{\etu,\Ru}
\end{split}
\right.
\ee
(analogous problems for $\bar{v}_{1,-}, \bar{v}_{2,+}, \bar{v}_{2,-}$)
and 
\begin{eqnarray}
\label{dirichlet-correction3}
 \left\{
  \begin{split}
\mathbb{L}(u_\e, \bg)  \, [\,v_C] \,\, = \,\, 0 \,\,&\quad
  \text{in}
\quad C_\e\,\,,\\
 v_C \,\,  =  \,\, \psi_{1,+} \,\,  &\quad \,\,\text{on}  \quad \partial  D_{\etu,\Ru}
\\
v_C \,\,  =  \,\, \psi_{1,-} \,\,  &\quad \,\,\text{on}  \quad \partial  D_{\etu,-\Ru}
\\
v_C \,\,  =  \,\, \psi_{2,+} \,\, & \quad \,\,\text{on}  \quad \partial  D_{\etd,\Rd}
\\
v_C \,\,  =  \,\, \psi_{2,-} \,\,  &\quad \,\,\text{on}  \quad \partial  D_{\etd,-\Rd}.
  \end{split}
\right.
\end{eqnarray}
The Dirichlet data for the problems \eqref{dirichlet-correction2} and
\eqref{dirichlet-correction3} must be chosen in a proper way. Namely, the choice of $\bar{\psi}\,:=(\,\psi_{1,+}, \psi_{1,-}, \psi_{2,+}, \psi_{2,-})$
is dictated by the requirement that the function $z$ has the correct regularity for being a solution on the whole $M_\e$. In fact $z$ is certainly continuous for any choice of $\bar\psi$, but there may be a lack $C^1$-regularity across the interface $\partial D_{\eta_{1},\Ru}$. To avoid this situation, we impose the following $C^1$-matching condition
\be
\label{C1-matching}
\begin{split}
\partial_{\ru}(v_{1,\pm} + \bar{v}_{1,\pm})_{|\ru=\pm \Ru} &= \partial_{\ru} v_{{C}_{\left|\ru=\pm \Ru\right.}} \\
\partial_{\rd}(v_{2,\pm} + \bar{v}_{2,\pm})_{|\rd=\pm \Rd} &= \partial_{\rd} v_{{C}_{|\rd=\pm \Rd}}. 
\end{split}
\ee
To show that the {\em ansatz} above actually yields a good definition for $z$, we adopt the following strategy. First of all, we show that problems \eqref{dirichlet-correction2} and \eqref{dirichlet-correction3} have a unique solution with $\e$-uniform a priori estimate for generic Dirichlet data. Secondly, studying the behavior of the so called {\em{Dirichlet to Neumann map}}, we will calibrate the choice of the boundary data in such a way that conditions \eqref{C1-matching} are satified.

\medskip

As we did for \eqref{dirichlet-correction1}, we note that, modulo the use of the conformal equivariance property, \eqref{dirichlet-correction2} is of the same type as \eqref{DIRICHLET PM}. Thus, Proposition \ref{iso-cylinder} applies giving, for any Dirichlet datum 
$\psi_{1,+}\in C^{2,\b}(\partial D_{\etu,  \Ru})$, a unique 
$\bar{v}_{1,+}\in C^{2,\b}_{-\d} (D_{\etu, \Ru}) \oplus \mathcal{W}^+ (D_{\etu, \Ru})$ such that
\be
\label{stimavbar}
\nor{\,\bar{v}_{1,+}}{C^{2,\b}_{-\d} (D_{\etu,\Ru}) \oplus \mathcal{W}^+(D_{\etu,\Ru})}  & \leq & C \,\,\, \nor{\, \psi_{1,+}}{C^{2,\b}(\partial D_{\etu, \Ru})}. \,\,
\ee
 Of course, the same holds for $\bar{v}_{1,-}, \bar{v}_{2,+}$, and $\bar{v}_{2,-}$. 
 
\medskip
 
To solve problem \eqref{dirichlet-correction3} we take advantage of the fact that for any fixed $\e$, the domain 
$C_\e$ is compact and thus we can apply the Fredholm alternative. Hence, we have existence and uniqueness for problem \eqref{dirichlet-correction3}, provided
\begin{equation*}
\label{dirichlet-correction3bis}
\left\{
\begin{split}
\mathbb{L}(u_{\e},\bg)\,[v]&= 0 \,\,\,\,\,\,\,\, C_\e\\
v &= 0 \,\,\,\,\,\,\,\,\partial C_\e
\end{split}
\right.
\end{equation*}
only admits the trivial solution. To prove this, we are going to show that there exists a positive constant $B>0$ independent of $\e$ such that the {\em a priori} bound
\be
\label{stimatris}
\Vert v\Vert_{C^{2,\b}_{\gamma-\frac{n-2k}{2k}}(C_\e)} \,\, \leq \,\, ÊB \, \,\Vert f\Vert_{C^{\,0, \b}_{\gamma - (n-2k)}(C_\e)}\,
\ee
is in force for solutions to
\begin{equation*}
\label{dirichlet-correction3tris}
\left\{
\begin{split}
\mathbb{L}(u_{\e},\bg)\,[v]&= f \,\,\,\,\,\,\,\, C_\e\\
v &= 0 \,\,\,\,\,\,\,\,\partial C_\e
\end{split}
\right.
\end{equation*}
To prove \eqref{stimatris}, we use a blow-up argument similar to the one used in the proof of Proposition \ref{stime:local2}. The only difference 
is in the treatment of the 
case ${\bf 1}$. In particular, following the argument used in the above mentioned proof and one ends up with a function $v_\infty$ such that $u_{1}^{-1}\,v_\infty$ is a non trivial smooth solution of the following boundary value problem
\begin{eqnarray*}
\label{prova1stimatris}
 \left\{
  \begin{split}
\mathbb{L}(1\,, g_{1})  \, [ u_{1}^{-1}v_{\infty}] \,\, = \,\, 0\,\, & \quad \,\, \text{in}
\quad (- R_1 , \, R_1) \times \mathbb{S}^{n-1}=:C_{0,1}\,\,,
\\
v \,\,  = \,\, 0 \,\,  & \quad \,\,\text{on}  \quad \partial C_{0,1}
\,\,
  \end{split}
\right.
\end{eqnarray*}
Thus, thanks to the conformal equivariance property 
\eqref{confequilin} and 
%
Proposition \ref{APER-cil-fin}, we infer that $v_\infty\,\equiv\, 0$, which is a contradiction. Thus, there exists a unique $v_C$ solving \eqref{dirichlet-correction3} and such that
\be
\label{stimavc}
\Vert v_C\Vert_{C^{2,\b}_{\gamma-\frac{n-2k}{2k}}(C_\e)} \,\, \leq \,\, ÊB \, \,\Vert \bar{\psi}\Vert_{C^{\,2, \b}
(\partial C_\e)}\,.
\ee
Having at hand the functions $\bar{v}_{1,+}, \bar{v}_{1,-}, \bar{v}_{2,+}, \bar{v}_{2,-}$ and $v_C$, we can introduce the Dirichlet to Neumann maps for problems \eqref{dirichlet-correction2} and \eqref{dirichlet-correction3}. For the exterior problem \eqref{dirichlet-correction2}, we define the mapping $T:\,C^{2,\b}(\partial C_\e)\longrightarrow C^{1,\b}(\partial C_\e)$, whose action is given by
\begin{equation*}\label{DNT}
T:\, \bar{\psi}:=(\psi_{1,+}, \psi_{1,-}, \psi_{2,+}, \psi_{2,-}) \longmapsto (-\partial_{\ru}\bar{v}_{1,+},\partial_{{\ru}}\bar{v}_{{1,-}};-\partial_{\rd}\bar{v}_{2,+},\partial_{\rd}\bar{v}_{2,-})_{\left|_{\ru=\pm \Ru; \rd=\pm \Rd}\right.}.
\end{equation*}
We notice that the action of $T$ is diagonal in the sense 
that the Dirichlet datum defined on a connected component of the boundary $\partial C_\e$ is mapped to a Neumann datum defined on the same connected component. 

\medskip

On the other hand, for the interior problem \eqref{dirichlet-correction3} we define the mapping $ S_\e:\,C^{2,\b}(\partial C_\e)\longrightarrow C^{1,\b}(\partial C_\e)$ which acts in the following way
\begin{equation*}\label{DNS}
S_\e:\, \bar{\psi} \longmapsto (-\partial_{\ru} v_C, \partial_{\ru} v_C;-\partial_{\rd} v_C, \partial_{\rd} v_C)_{\left|_{\ru=\pm \Ru; \rd=\pm \Rd}\right.}.
\end{equation*} 
In terms of the operators $T$ and $S_\e$ the $C^1$-matching condition can be rewritten as
\begin{eqnarray*}\label{C1matchingbis}
\left[(T-S_\e)[\bar{\psi}]\right]_{1,\pm}\,&=&\,\pm\partial_{\ru} v_{1,\pm} \,\,\,\,\hbox{ on } \partial D_{\etu,\pm \Ru}\\
\left[(T-S_\e)[\bar{\psi}]\right]_{2,\pm}\,&=&\,\pm\partial_{\rd} v_{2,\pm} \,\,\,\,\hbox{ on } \partial D_{\etd,\pm \Rd}.\nonumber
\end{eqnarray*}
Thus, the {\em ansatz} for $z$ actually yields a well defined correction if we prove that the above pseudodifferential problems 
is solvable. To this end, we are going to show that, up to choose the parameter $\e$ small enough, the map 
$$T-S_\e: C^{2,\b}(\partial C_\e)\longrightarrow C^{1,\b}(\partial C_\e) $$
is invertible. First of all, we notice that $T$ and $S_\e$ are well defined and, thanks to the {\em a priori} estimates on the solutions to problems \eqref{dirichlet-correction2} and \eqref{dirichlet-correction3}, they are also $\e$-uniformly bounded. We prove now the following
%
\begin{lem}
\label{convS}
As $\e\rightarrow 0$ the operator $S_\e$ converges in norm to the operator $S_0$ defined as
\begin{eqnarray*}
\quad\quad\quad  S_0\,: \,C^{2,\b}(\partial C_{0,1})\times C^{2,\b}(\partial C_{0,2})  &\longrightarrow& C^{1,\b}(\partial C_{0,1}) \times C^{1,\b}(\partial C_{0,2})\\
\left((\psi_{1,+},\psi_{1,-}),(\psi_{2,+},\psi_{2,-}) \right) &\longmapsto & \big( (-\partial_{\ru} v_{C,1},\partial_{\ru} v_{C,1})_{\left|_{\ru=\pm \Ru}\right.},(-\partial_{\rd} v_{C,2},\partial_{\rd} v_{C,2})_{\left|_{\rd=\pm \Rd}\right.} \big),\nonumber
\end{eqnarray*}
where the function $v_{C,1}$ is the unique solution of
\begin{eqnarray}\label{problimit}
 \left\{
  \begin{split}
\mathbb{L}( 1 , g_1)  \, [ u_{1}^{-1} v_{C,1}] \,\, = \,\, 0\,\, & \quad \,\,\,\, \hbox{in} \,\,\,\,\,C_{0,1}\,\,,
\\
v_{C,1} \,\,  = \,\, \psi_{1,+} \,\,  & \quad \,\,\,\,\hbox{on}  \,\, \left\{	\Ru\right\}\times \mathbb{S}^{n-1}\\
v_{C,1} \,\,  = \,\, \psi_{1,-} \,\,  & \quad \,\,\,\hbox{ on}  \,\, \left\{- \Ru\right\}\times \mathbb{S}^{n-1}
  \end{split}
\right.
\end{eqnarray}
Of course, an analogous problem characterise $v_{C,2}$.
\end{lem}
\begin{proof} The proof is by contradiction. Taking advantage of the uniform {\em a priori} bound \eqref{stimatris}, we can suppose that if the statement is not true, then there exist a sequence $(\e_j, \bar \psi_j)$ such that $\e_j \rightarrow 0$ and, for every $j \in \mathbb{N}$, $\nor{\bar \psi_j}{C^{2,\beta}(\partial C_{\e_j})} = 1$ and 
\be
\label{contrS}
\nor{ (S_{\e_j} - S_0) \, (\bar\psi_j)}{C^{1,\beta}(\partial C_\e)} \,\, \geq \,\, 1/2 \,\, .
\ee
Let $v_{C}^{j}$ be the unique solution to problem \eqref{dirichlet-correction3} with $\bar \psi_j$ as Dirichlet boundary datum. Up to choose a subsequence, we may suppose that the boundary data $\bar\psi_j$ converge to some $\bar\psi_\infty$ in $C^2(\partial C_{0,1} \cup \partial C_{0,2})$. From the uniform {\em a priori} estimates \eqref{stimatris} combined with the fact that $g_{\e_j}\rightarrow g_i$ in $C^2$ on the compact subsets of $D_{\eti}\setminus \left\{ p_i\right\}$, we deduce that, up to a subsequence, also the functions $v_C^j$ converge to some functions $v_{\infty,i}$ on the compact subsets of $C_{0,i}\setminus \left\{p_i\right\}$ with respect to the $C^2$-topology, $i=1,2$.
Making use of the conformal equivariance property combined with the removable singularities Lemma \ref{removable_lem} (which is in force since $\gamma\,<\,(n-2k)/k$ and $2<2 k\le n $), it is not difficult to see that $v_{\infty,i}$ can be extended through $p_i$ to a smooth solution of problem \eqref{problimit} on the whole $C_{0,i}$, $i=1,2$. Since it is evident that problem \eqref{problimit} has a unique solution, the functions $v_{\infty,1}$ and $v_{\infty,2}$ must coincide with $v_{C,1}$ and $v_{C,2}$ respectively. As a consequence, their normal derivative at the boundary must agree. This contradicts \eqref{contrS}.
%
%
\end{proof}
In force of the lemma above, the invertibility of $T-S_\e$ is now a consequence of the invertibility of the limit pseudodifferential operator $T-S_0$. Now, since the spectrum of the limit operator $T-S_0$ is discrete 
it is sufficient to prove the injectivity of $(T- S_0)$. We reason by contradiction and we assume to have $\bar{\psi}=(\psi_{1,+}, \psi_{2,-}, \psi_{2,+}, \psi_{2,-})\neq 0$ for which $(T-S_0)[\bar{\psi}]=0$. The existence of such a boundary datum $\bar{\psi}$ implies the existence of a smooth function $\tilde v$ 
solving  
\begin{eqnarray*}
\label{problimittotal}
\mathbb{L}(1 , g_1)  \, [ u_{1}^{-1} \tilde v] \,\, = \,\, 0\,\, & \quad \,\, \hbox{on} \,\,\,\,\,D_{\eta_1}\,\,, 
 \end{eqnarray*}
and such that
 \begin{eqnarray*}
 \label{vincolo}
 \left\{
  \begin{split}
\tilde v (R_1,\theta)\,\, & = & \psi_{1,+}(\theta) \,\,\, \\
\tilde{v}(-\Ru,\theta)\,\, &  = &\psi_{1,+}(\theta)\, .
  \end{split}
\right.
\end{eqnarray*}
Moreover, $\tilde{v}$ has the following form
\begin{eqnarray*}
\label{strutturavinfty}
 \tilde{v} \,\, = \,\, 
 \left\{
  \begin{split}
 \bar{v}_{1,+} \quad\text{in}
\quad & D_{\etu,\Ru}&\,\,
\\
v_{C,1}\,\,\,\,\,\,\text{in} \quad & C_{0,1}&
\\
\bar{v}_{1,-}\quad \text{in}
\quad  &D_{\etu,-\Ru} \,,&\,\,
  \end{split}
\right.
\end{eqnarray*}
where we recall that $\bar{v}_{1,+}$ and $\bar{v}_{1,-}$ are solutions to problems of the type \eqref{dirichlet-correction2} and belong to  $C^{2,\beta}_{-\d} (D_{\etu,\Ru})  \, \oplus \, \mathcal{{W}^{+}} (D_{\etu,\Ru})$ and to $C^{2,\beta}_{-\d} (D_{\etu,-\Ru})  \, \oplus \, \mathcal{{W}^{-}} (D_{\etu,-\Ru})$, respectively.
Of course $\tilde v$ has the corresponding features on $D_{\eta_2}$, but since the situation is somehow symmetric we just focus on $D_{\eta_1}$. To reach the desired contradiction, we aim to show that $\tilde v\equiv 0$. 

\medskip

We perform the usual separation of variable, projecting the equation along the eigenfunction $\phi_j$'s of the Laplace-Beltrami operator on $(S^{n-1},g_{S^{n-1}})$, having in mind that the high frequencies ($j\ge n+1 $) and the low frequencies ($j=0,\ldots,n $) will be treated separately 
\begin{eqnarray*}
\tilde v\, (r_1,\theta) \,\,\,\, = \,\,\,\, \sum_{j =
0}^{n}\, \tilde v^j(\ru)\,\phi_j(\theta) \,+ \,  \sum_{j=
n+1}^{\infty}\, \tilde v^j(\ru)\,\phi_j(\theta) \,\, .
\end{eqnarray*}
The high frequencies are easier to treat. In fact the deficiency space components are not present in this regime. Thus the $\tilde v^j$ are exponentially decreasing for $|r_1| \rightarrow + \infty$. Hence, the maximum principle forces them to be identically zero.
%

\medskip

To obtain the same result for the low frequencies, we note that the functions $\tilde v^{j}$ for $j=1,\ldots,n$ can be written as a linear combination of 
$\Phi_{\eta_1}^{j,+}(\ru,\theta) $ and $\Phi_{\eta_1}^{j,-}(\ru,\theta)$. Namely, for any $j=1,\ldots,n$ there exist real numbers $A_j, B_j$ such that 
\begin{equation*}\label{lowfreq1}
\tilde v^{j} (\ru)\,:=\, A_j \, \big[ \tfrac{n-2k}{2k} v_{D,\etu} (\ru) Ê\, + \, \dot{v}_{D,\etu} (\ru) \, \big] \, e^{\ru} Ê\\
+B_j \, \big[ \tfrac{n-2k}{2k} v_{D,\etu} (\ru) Ê\, - \, \dot{v}_{D,\etu} (\ru) \, \big]\,e^{-\ru}.
\end{equation*}   
Now, since $\tilde v^{j}\equiv \bar{v}^{j}_{1,+}$ for $r_1>R_1$ and $\bar{v}_{1,+}$ has a component decaying like $e^{-\d r_1}$ with $\d>1$ and the other one  decaying like $e^{-\ru}$ as $r_1 \rightarrow +\infty$, there holds that necessarily $A_j \,=\,0$. The same argument used when $r_1<-R_1$ shows that also $B_j\,=\,0$. This implies that
$\psi_{1,\pm}^{j}\,=\, 0$ for $j=1,\ldots,n$. 

\medskip

The last case is when $j=0$. As before, the function $\tilde v^{0}$ is a linear combination of the two Jacobi fields $\Phi_{\etu}^{0,-}$ and $\Phi_{\etu}^{0,+}$, namely, there exist real numbers $A_0$ and $B_0$ such that
$$
v_\infty^{0}(\ru) \,\, = \,\, A_0 \, \Phi_{\etu}^{0,+}(\ru)  + \, B_0\Phi_{\etu}^{0,-}(\ru) \,\, .
$$
Comparing the asymptotic behavior at $\pm \infty$ of the expression above with the one prescribed by the constraints $\tilde v^0 = \bar v^0_{1,+}$ for $r_1 > R_1$ and $\tilde v^0 = v^0_{1,-}$ for $r_1 < \Ru$, we get $A_0=0=B_0$. As a consequence $\tilde v^0 \equiv 0$ and $\psi_\pm^0=0$.

\medskip

The conclusion is that $T-S_0$ is injective, hence invertible and for $\e$ sufficiently small also $T-S_\e$ is invertible. As already observed, this implies that the correction $z$ is well defined and thus the operator $\mathbb{L}(u_{\e},\bg)$ is surjective on $C^{2,\beta}_{-\d,\gamma-\frac{n-2k}{2k}} (M_\e)  \, \oplus \, \mathcal{{W}} (M_\e)$.


\medskip

To conclude the proof of our statement, we need to establish the {\em a priori} estimate \eqref{stima-globale}, which obviously implies injectivity. First of all we notice that thanks to \eqref{apepm},
the solution $w$ verifies, for some positive constant $C>0$ independent of $\e$, the bound
$$
\nor{w}{C^{2,\b}_{-\d}(D_{\etu,\Ru})\oplus \mathcal{W}^+(D_{\etu,\Ru})} \,\, \leq \,\, C \,\, \big[ \,
\nor{f}{C^{0,\b}_{-\d,\gamma-{(n-2k)}}(M_\e)} + \, \nor{w_{|\partial C_\e}}{C^{2,\b}(\partial C_\e)}
 \, \big]
$$
with analogous estimates on the other connected components of $M_\e \setminus C_\e$. Moreover, the same argument used to prove \eqref{stimavc} implies that for $\e$ sufficiently small
\begin{equation*}
\Vert w \Vert_{C^{2,\b}_{\gamma-\frac{n-2k}{2k}}(C_\e)} \,\, \leq \,\, ÊC \, \, \big[ \,\, \nor{f}{C^{0,\b}_{-\d,\gamma-{(n-2k)}}(M_\e)} \,\, + \,\, \Vert   w_{|\partial C_\e}
\Vert_{C^{\,2, \b} 
(\partial C_\e)} \,\, \big] \,,
\end{equation*}
for some $C>0$ independent of $\e$. Finally, it follows from standard interior Schauder estimates that the norm of $w$ on $\partial C_\e$ is uniformly bounded by the norm of $f$ in a small neighborhood of $\partial C_\e$. Combining this remark with the last two estimates, it is now easy to obtain \eqref{stima-globale}. This completes the proof. 
\end{proof}

\section{Nonlinear analysis}
\label{s:nonlinear}

In this last section we are going to correct the approximate solutions $u_\e$ to exact solutions, provided the parameter $\e$ is small enough. Again, for sake of simplicity, we limit ourself to the case $M_\e = D_{\eta_1} \sharp_\e D_{\eta_2}$, without discussing the minor changes needed for the general case. According to the linear analysis, it is natural to look for a correction lying in $ C^{2,\b}_{-\d , \g - \frac{n-2k}{2k}}(M_\e) \oplus \mathcal{W}(M_\e)$. Recalling that $\mathcal{W}(M_\e)$ is finite dimensional and identifying a function in this space with its coordinates with respect to the Jacobi fields basis, it is immediate to obtain the following isomorphism of Banach spaces
\begin{eqnarray*}
\label{ident}
\mathscr{I} \,\, : \,\,
 C^{2,\b}_{-\d , \g - \frac{n-2k}{2k}}(M_\e) \oplus \mathcal{W}(M_\e)
 & \longrightarrow & C^{2,\b}_{-\d , \g - \frac{n-2k}{2k}}(M_\e) \times\mathbb{R}^{n+1} \times\mathbb{R}^{n+1} \times\mathbb{R}^{n+1} \times\mathbb{R}^{n+1} \\
w\,=\,\hat{w} + \tilde{a}^{i,+}_{j} \Psi_{\eta_i}^{j,+} + \tilde{a}^{i,-}_{j}\Psi_{\eta_i}^{j,-} & \longmapsto & (\hat w \, , {\boa}^{1,+} , {\boa}^{1,-} , {\boa }^{2,+} , {\boa}^{2,-})
\end{eqnarray*}
where, for $i=1,2$ and $j=0,\ldots,n$, 
\bea 
\tilde{a}_{j}^{i,+}\,\,:=\,\,\chi_{R'_{i}} \, a_{j}^{i,+} & \,\,\hbox{and}\,\,&\tilde{a}_{j}^{i,-}\,:=\,\chi_{-R'_{i}} \, a_{j}^{i,-} \quad \hbox{and}\quad \boa^{i,\pm}\,:=\,\big({a}^{i,\pm}_{0}, \ldots ,{a}^{i,\pm}_{n}\big) \, .
\eea 
To describe our perturbation, we denote by $u_\e(\boa^{1,+}, \boa^{1,-}, \boa^{2,+}, \boa^{2,-} , \, \cdot \,)$ the variation of $u_\e$ with parameters $\boa^{i, \pm}$, $i=1,2$, supported on $M_\e \setminus C_\e$, which is defined on $D_{\eta_1, R_1}$ as 
$$
|\theta - (\tilde a^{1,+}_1, \ldots , \tilde a^{1,+}_n) e^{-r{_1}}|^{-\frac{n-2k}{2k}} v_{D, \eta_1} \big( \, r_1 + \log |\theta - (\tilde a^{1,+}_1, \ldots , \tilde a^{1,+}_n) e^{-\ru}| + \log (1 + \tilde a_0^{1,+})  \,   \big) \,\, .
$$
The definition of $u_\e(\boa^{1,+}, \boa^{1,-}, \boa^{2,+}, \boa^{2,-} , \, \cdot \,)$ on the other connected component of $M_\e \setminus C_\e$ is analogous. We note {\em en passant} that the `straight' approximate solution $u_\e (\cdot)$ corresponds to $\boa^{i,\pm} = 0$, for $i=1,2$.

\medskip

To get exact solutions for our nonlinear problem, we are led to look for $(\hat{w},\boa^{1,+},\boa^{1,-},\boa^{2,+},\boa^{2,-})\in C^{2,\b}_{-\d , \g - \frac{n-2k}{2k}}(M_\e) \times\mathbb{R}^{n+1} \times\mathbb{R}^{n+1} \times\mathbb{R}^{n+1} \times\mathbb{R}^{n+1}$ such that 
\begin{eqnarray}
\label{nonlinear}
\mathcal{N}(u_\e(\boa^{1,+},\boa^{1,-},\boa^{2,+},\boa^{2,-},\, \cdot\,) + \hat{w}(\cdot) \, , \, \bar g \, ) & = & 0\,.
\end{eqnarray}

\medskip

A simple computation gives
\bea
\label{newlin}
\left.
\frac{d}{ds} \right|_{s=0} \mathcal{N}\, (u_{\e}(s\tboa^{1,+},s\tboa^{1,-},s\tboa^{2,+},s\tboa^{2,-},\cdot) Ê+ s\hat{w}(\cdot),\bg)\, =\,\mathbb{L}(u_\e(\cdot), \bg) [w] \, , 
\eea
where, according to \eqref{ident},
\be
\label{forma w}
w\,=\,\hat{w} + \tilde{a}^{i,+}_{j}\Psi_{\eta_i}^{j,+} + \tilde{a}^{i,-}_{j}\Psi_{\eta_i}^{j,-}.
\ee
This formula suggests that the perturbation of $u_\e$ to an exact solution will involve a decaying term (through $\hat{w}$) together with small conformal variations of the former `straight' approximate solution. Geometrically, these variations corresponds to translations along the Delaunay axis, changes of the Delaunay parameter $\eta$ and `bendings at infinity'.

\medskip

Using a Taylor expansion, we can rewrite \eqref{nonlinear} as
\bea
\label{equation}
 \disp 0 \,&=&\,\mathcal{N}(u_{\e}(\tboa^{1,+},\tboa^{1,-},\tboa^{2,+},\tboa^{2,-},\cdot) Ê+ \hat{w}(\cdot), \bar g) \nonumber\\
\disp \,  &=& \, \mathcal{N}(u_{\e}(\cdot), \bar g) \, + \, \mathbb{L}(u_{\e}(\cdot) , \bar g) [ w] \, + \, \mathcal{Q} (u_{\e}(\cdot) , \bar g) \,[w,w] \,\, ,
\eea
where $\mathcal{Q} (u_{\e}(\cdot) , \bar g) \,[w,w]$
is the quadratic remainder.
Thus, the fully nonlinear problem becomes equivalent to the following fixed point problem for $w=(\hat{w},\boa^{1,+},\boa^{1,-},\boa^{2,+},\boa^{2,-})\in C^{2,\b}_{-\d , \g - \frac{n-2k}{2k}}(M_\e) \times\mathbb{R}^{n+1} \times\mathbb{R}^{n+1} \times\mathbb{R}^{n+1} \times\mathbb{R}^{n+1}$
\begin{eqnarray}
\label{nonlinear-fixed-point}
w\,=\,\mathbb{L}(u_\e(\cdot),\bg)^{-1}\Big[\,  - \mathcal{N}(u_\e(\cdot),\bg) \, - \, \mathcal{Q}(u_\e(\cdot),\bg)[ \, w,w \,] \,\Big] \, .
\end{eqnarray}
It is worth remarking that at first time one could have used a simple perturbation of the form $u_\e + w$ with $w$ as \eqref{forma w}. In fact, the first order expansion of $\mathcal{N} (u_\e+w,\bg)$ is also given by $\mathbb{L}(u_{\e}(\cdot) , \bar g) [ w]$ and thus the linear analysis of the previous sections is still in force. On the other hand the components of the correction $w$ along the Jacobi fields $\Psi^{0,\pm}_{\eta_i}$ are not necessarily infinitesimal with respect to $u_\e$ when $|r_i| \rightarrow +\infty$, $i=1,2$, and this may possibly affect the completeness of the final solution. To avoid this problem we had to deal with perturbations of the form \eqref{nonlinear}. Indeed the conformal equivariance of the $\sigma_k$-equation insures that the variations $u_\e(\boa^{1,+},\boa^{1,-},\boa^{2,+},\boa^{2,-},\, \cdot\,)$ 
are still complete exact solutions far away from the central body $C_\e$. Since the remaining part of the perturbation $\hat w$ is exponentially decaying at infinity, the completeness of the exact solutions is definitely guaranteed. 
 
\medskip

We denote by $\mathcal{P}$ the mapping $\mathcal{P}:C^{2,\b}_{-\d , \g - \frac{n-2k}{2k}}(M_\e) \times\mathbb{R}^{n+1} \times\mathbb{R}^{n+1} \times\mathbb{R}^{n+1} \times\mathbb{R}^{n+1}\rightarrow C^{2,\b}_{-\d , \g - \frac{n-2k}{2k}}(M_\e) \times\mathbb{R}^{n+1} \times\mathbb{R}^{n+1} \times\mathbb{R}^{n+1} \times\mathbb{R}^{n+1}$ that associates to any $w$ the right hand side of \eqref{nonlinear-fixed-point}. In what follows, we will find the fixed point $w$ as a limit of the sequence $\left\{w_i\right\}_{i\in \mathbb{N}}$ defined by means of the following Newton iteration scheme
\begin{eqnarray}\label{newton-scheme}
\begin{cases}
w_{0}\,\,\,\,\,\,:=\,0\\
w_{i+1}\,:=\,\mathcal{P}(w_i),\,\,\,i\in\mathbb{N}.
\end{cases}
\end{eqnarray} 
To this end, we need some preparatory Lemmata.
We state the following 
\begin{lem}
\label{error estimate}
There exists a positive constant $A=A(n,k)>0$ such that for every
$1<\d< \bar \d \,(n,k)$
andÊ$ \,\,0 < \gamma < {(n-2k)}/{k}$ the proper error is estimated as
\begin{eqnarray}
\label{proper-error}
\nor{\, \mathcal{N}( u_\e,\bg ) \,}{C^{0,\b}_{-\d,\gamma-(n-2k)}(M_\e)} & \leq & A \, \e^{\gamma\frac{n-2k}{n} } .
\end{eqnarray}
\end{lem}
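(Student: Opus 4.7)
Since $u_\e \equiv 1$ and $\bg = g_i$ on $D_{\eta_i}\setminus B(p_i,1)$ and each $g_i$ is an exact Delaunay solution, $\mathcal{N}(u_\e,\bg)$ vanishes identically outside the neck $N_\e$; the $C^{0,\b}$-seminorm is controlled by the weighted $C^0$ norm via standard interior Schauder rescaling on balls of fixed cylindrical radius, so I focus on the $C^0$ estimate on $N_\e$. I partition the neck into the central region $N_\e^{mid} := [-1,1]\times\mathbb{S}^{n-1}$ (where the cutoff $\eta$ transitions and both $c_1,c_2$ are $O(\e^2)$), and two side regions $N_\e^L := (\log\e, -1]\times\mathbb{S}^{n-1}$ and $N_\e^R := [1,-\log\e)\times\mathbb{S}^{n-1}$, where $\eta \equiv 1$ or $0$ so $\bg = \bg_1 := (1+c_1)g_{cyl}$ or $\bg_2$ respectively.

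\textbf{Pointwise bounds.} On $N_\e^L$, exploit the conformal equivariance \eqref{confequi} to write
\begin{equation*}
\mathcal{N}(u_\e,\bg_1) \;=\; u_1^{2kn/(n-2k)}\,\mathcal{N}\bigl(u_\e/u_1,\,g_1\bigr) \;=\; u_1^{2kn/(n-2k)}\,\mathcal{N}\bigl(1+\chi(-t)\,u_2/u_1,\,g_1\bigr),
\end{equation*}
using that $g_1 = u_1^{4k/(n-2k)}\bg_1$ is exact Delaunay, so $\mathcal{N}(1,g_1)=0$. Since $u_2/u_1 = e^{(n-2k)t/k}\leq e^{-(n-2k)/k}<1$ on $N_\e^L$, the perturbation $\phi := \chi(-t)\,u_2/u_1$ stays uniformly in a regime where $|\mathcal{N}(1+\phi,g_1)|\lesssim \|\phi\|_{C^2(g_1)}$ by the Lipschitz dependence of the fully nonlinear equation on its first argument. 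Direct computation then gives $|\mathcal{N}(u_\e,\bg)|\lesssim u_1^{2kn/(n-2k)}\|\phi\|_{C^2(g_1)}$, which at the outer edge $t\sim\log\e$ (where $u_1\sim 1$) is bounded by $u_2\sim\e^{(n-2k)/k}$, and is much smaller in the interior of $N_\e^L$ because of the large positive power of $u_1$. Symmetrically on $N_\e^R$. On $N_\e^{mid}$, $u_1+u_2$ is precisely the Schwarzschild conformal factor from Proposition~\ref{scharz} with $\sigma_k(B_{g_{u_1+u_2}})=0$, so a direct expansion yields $|\mathcal{N}(u_\e,\bg)|\lesssim (u_1+u_2)^{2kn/(n-2k)}\lesssim \e^n$, modulo $O(\e^2)$ corrections coming from the blend of $c_1,c_2$ and the derivatives of the cutoffs $\chi,\eta$.

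\textbf{Weighted supremum.} Multiplying the pointwise bounds by the neck-weight $(\e\cosh t)^{\gamma-(n-2k)}$: on $N_\e^{mid}$ the weighted quantity is bounded by $(\e\cosh t)^{\gamma+2k}\lesssim \e^{\gamma+2k}$, while on the side regions $N_\e^{L,R}$ the weighted quantity is maximized at the outer boundary $|t|\sim|\log\e|$ where $\e\cosh t = O(1)$, yielding a bound $\lesssim \e^{(n-2k)/k}$. Both exponents exceed $\gamma(n-2k)/n$ under the hypothesis $\gamma<(n-2k)/k<n/k$, and the claim follows. The main obstacle is the sharp control of $\mathcal{N}(1+\phi,g_1)$ in the side regions: a direct linearization approach in the new $(t,\theta)$ chart (without the reduction $\mathcal{N}(u_\e,\bg_1) = u_1^{2kn/(n-2k)}\mathcal{N}(u_\e/u_1,g_1)$) would produce coefficients involving $g_1^{-1}$ that blow up as $u_1\to 0$ toward the inner end of $N_\e^L$, giving too weak a pointwise estimate to survive multiplication by the weight. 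Exploiting the identity transfers the problem to the smooth complete Delaunay manifold $(D_{\eta_1},g_1)$, where uniform ellipticity and the smallness $u_2/u_1 < 1$ yield the clean $\|\phi\|_{C^2(g_1)}$-type estimate needed.
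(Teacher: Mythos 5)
The paper does not actually prove this lemma: it delegates everything to \cite[Lemma 5.1]{cat-maz}, noting only that $\mathcal{N}(u_\e,\bg)$ is supported in the neck. Your argument supplies the missing computation and follows exactly the strategy used there: vanishing of the error off the neck, conformal equivariance to compare $u_\e$ with the exact Delaunay factor $u_1$ (resp.\ $u_2$) on the side regions and with the $\sigma_k$--Schwarzschild factor $u_1+u_2$ in the middle, then multiplication by the weight $(\e\cosh t)^{\gamma-(n-2k)}$. The key identity $\mathcal{N}(u_\e,\bg_1)=u_1^{2kn/(n-2k)}\mathcal{N}(1+\chi(-t)u_2/u_1,\,g_1)$ is correct, and your final exponent count does yield a bound stronger than $\e^{\gamma(n-2k)/n}$, so the proof is essentially right and takes the intended route.

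Three points deserve tightening. First, the H\"older seminorm of $\mathcal{N}(u_\e,\bg)$ is \emph{not} controlled by its weighted sup via ``Schauder rescaling'' (Schauder estimates bound the solution by the data, not a datum's seminorm by its sup); what one actually does is run the identical pointwise analysis on one more derivative of the explicit expression --- e.g.\ $|\nabla\mathcal{N}(1+\phi,g_1)|\lesssim\|\phi\|_{C^3}$, with $\phi=\chi(-t)u_2/u_1$ having all derivatives of size $e^{(n-2k)t/k}$ --- and then bound the H\"older quotient on unit cylindrical balls by the gradient. Second, in the middle region the ``$O(\e^2)$ corrections'' from $c=\eta c_1+(1-\eta)c_2$ are not subordinate to the Schwarzschild term for $k\ge 2$: the coefficients of $\mathbb{L}(v_\Sigma,g_{cyl})$ are of size $v_\Sigma^{2k-1}\sim\e^{(n-2k)(2k-1)/(2k)}$, so the perturbation $v_\Sigma\,O(\e^2)$ produces an error of order $\e^{n-2k+2}$, which dominates $\e^{n}$ and yields a weighted bound $\e^{\gamma+2}$ rather than $\e^{\gamma+2k}$; this is still well below the target since $\gamma+2>\gamma(n-2k)/n$. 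Third, on the side regions the weighted quantity $\e^{(n-2k)/k}s^{\gamma+2k-(n-2k)/k}$ (with $s=\e e^{-t}$) is monotone in $s$ but the sign of the exponent depends on $n,k,\gamma$, so the supremum may sit at the inner rather than the outer edge, giving $\e^{\gamma+2k}$ instead of $\e^{(n-2k)/k}$; taking the maximum over both endpoints, each candidate still beats $\e^{\gamma(n-2k)/n}$, so the conclusion stands.
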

 The proof of this result could be found in \cite{cat-maz} to which we refer for all the details. Incidentally, we notice that the proof substantially uses that
$\mathcal{N}( u_\e,\bg )$ is concentrated only on the neck region $N_\e$. Thus, even if we have to deal with noncompact manifolds, the computations needed to estimate this term are exactly of the same type of the computations in \cite[Lemma $5.1$]{cat-maz}.

\medskip

In the following lemma we provide an estimate for the quadratc remainder outside the neck region. 
\begin{lem}
\label{lem-stima-quadratica-end-comp}
There exists a positive constant $C=C(n,k,\d,\gamma)$ such that for every $1<\d< \bar \d \,(n,k)$
andÊ$ \,\,0 < \gamma < {(n-2k)}/{k}$ there holds
\begin{equation}
\label{stima-quadratica-end-comp}
\nor{w}{C^{2,\b}_{-\d , \g - \frac{n-2k}{2k}}(M_\e) \oplus \mathcal{W}(M_\e)  } \le \rho \,\,\,\,\Longrightarrow \,\,\,\,\nor{\mathcal{Q}(u_\e,\bg)[ \, w,w \,] }{C^{0,\b}_{-\d, \g - (n-2k)}(M_\e\setminus N_\e)}\,\le\, C\rho^2.
\end{equation}
\end{lem}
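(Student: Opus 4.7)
The strategy is to re--express the quadratic remainder as a Taylor remainder centered on the exact--solution ansatz $u_\e(\boa)$ rather than on the straight $u_\e$. Setting $u^{*}_\e := u_\e(\boa) + \hat w$ and Taylor--expanding $\mathcal{N}(\cdot, \bg)$ about $u_\e(\boa)$, and using that $\mathcal{N}(u_\e, \bg) \equiv 0$ on $M_\e \setminus N_\e$ (since $\bg = g_i$ there and $u_\e \equiv 1$), one obtains on $M_\e \setminus N_\e$ the algebraic identity
\begin{align*}
\mathcal{Q}(u_\e, \bg)[w, w] \,=\,\, &\bigl[\,\mathcal{N}(u_\e(\boa), \bg) - \mathbb{L}(u_\e, \bg)[w^{\top}]\,\bigr]\\
&+\,\bigl[\mathbb{L}(u_\e(\boa), \bg) - \mathbb{L}(u_\e, \bg)\bigr][\hat w] \,+\, \mathcal{Q}_{0}(u_\e(\boa), \bg)[\hat w, \hat w],
\end{align*}
where $\mathcal{Q}_0$ denotes the classical quadratic remainder of $\mathcal{N}$ at the base point $u_\e(\boa)$. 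Each bracket is then estimated separately.

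The first bracket is controlled by a case distinction on the support of the cut--off function $\chi$ appearing in the definition of $u_\e(\boa)$. On the set $\{\chi = 1\}$, i.e., on the asymptotic ends of each $D_{\eta_i}$, the function $u_\e(\boa)$ is, by the conformal equivariance \eqref{confequi} and the construction in Subsection \ref{jacobi fields}, an exact solution of $\mathcal{N}(\cdot, \bg) = 0$; the corresponding first--order variation $w^{\top} = a^{i,\pm}_j \Psi^{j,\pm}_{\eta_i}$ is then a genuine Jacobi field of $\mathbb{L}(u_\e, \bg)$ (compare \eqref{newlin}), so both terms in the bracket vanish identically. On the transition annulus $\{0 < \chi < 1\}$, which is compact and of fixed size independent of $\e$, the smooth map $\boa \mapsto \mathcal{N}(u_\e(\boa), \bg) - \mathbb{L}(u_\e, \bg)[w^{\top}(\boa)]$ vanishes to second order at $\boa = 0$, hence the bracket is pointwise bounded by $C |\boa|^2 \leq C \rho^2$, while the weight $\cosh(t)^{-\d}$ is bounded above and below on this annulus. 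Finally, on $\{\chi = 0\}$ the bracket vanishes trivially, since there $u_\e(\boa) = u_\e$ and $w^{\top} = 0$.

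The second and third brackets are genuinely quadratic in $(\boa, \hat w)$. For the second, the smoothness of the coefficients of $\mathbb{L}(u, \bg)$ in $u$ and its first two derivatives, together with the pointwise bound $\sum_{j \leq 2}|\nabla^{j}(u_\e(\boa) - u_\e)| \leq C|\boa|$ on $M_\e \setminus N_\e$ (which follows from the explicit form of the conformal variations and the fact that the Jacobi fields composing $\mathcal{W}(M_\e)$ are at most bounded on the support of the corresponding cut--offs), yield $\bigl|[\mathbb{L}(u_\e(\boa), \bg) - \mathbb{L}(u_\e, \bg)][\hat w]\bigr| \leq C|\boa|\sum_{j \leq 2}|\nabla^{j}\hat w|$; after multiplying by $\cosh(t)^{\d}$ this is controlled by $C|\boa|\,\nor{\hat w}{C^{2,\b}_{-\d}} \leq C\rho^{2}$. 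For the third, the pointwise bound $|\mathcal{Q}_0(u_\e(\boa), \bg)[\hat w, \hat w]| \leq C\bigl(\sum_{j \leq 2}|\nabla^{j}\hat w|\bigr)^{2}$, combined with $\sum_{j \leq 2}|\nabla^{j}\hat w| \leq C\rho\cosh(t)^{-\d}$ and $\d > 0$, gives a pointwise bound by $C\rho^{2}\cosh(t)^{-2\d}$, which absorbs into $C\rho^{2}\cosh(t)^{-\d}$. The H\"older seminorm part of the norm is handled by the same pointwise estimates applied on unit--radius balls together with a standard scaling argument in weighted H\"older spaces. The main conceptual point here is the choice of Taylor expansion centered at $u_\e(\boa)$ rather than $u_\e$: it is precisely this choice that makes the otherwise linear--looking term $\mathbb{L}(u_\e, \bg)[w^{\top}]$ cancel with $\mathcal{N}(u_\e(\boa), \bg)$ on the region where $u_\e(\boa)$ is an exact solution, thereby producing the genuine $O(\rho^{2})$ bound.
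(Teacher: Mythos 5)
Your decomposition is, after unwinding the Taylor expansion about $u_\e(\boa)$, exactly the paper's splitting $\mathcal{Q}=Q_1+Q_2+Q_3$ (your third bracket is $Q_1$, your second is $Q_2$, and your first bracket is $Q_3$, whose compact support rests on the same observation that $\mathcal{N}(u_\e(\boa),\bg)\equiv 0$ off the cut-off transition annulus and that $w^\top$ is a Jacobi field there). The argument is correct and essentially identical to the paper's; the paper merely adds the explicit conjugation to the cylindrical background and a detailed verification of the weighted H\"older quotients via the polynomial structure of the coefficients of $\mathbb{L}$, which you defer to standard arguments.
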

\begin{proof}
Let us fix a positive $\rho$ for which 
$$\nor{w}{C^{2,\b}_{-\d , \g - \frac{n-2k}{2k}}(M_\e) \oplus\mathcal{W}(M_\e)  } \le \rho $$
holds. We analyze the size of $\mathcal{Q}(u_\e,\bg)[ \, w;w \,]$ according to the definition of the norm in $ C^{0,\b}_{-\d, \g - (n-2k)}(M_\e)$ in \eqref{def-norma-holder}.
In particular, we decompose $M_\e\setminus N_\e$ as 
$M_\e\setminus N_\e\,=\, D_{\etu,R_1}\cup D_{\etu,-R_{1}}\cup D_{\etd,R_{2}}\cup D_{\etd,-R_{2}}
\cup C_\e\setminus N_\e$ and we prove that \eqref{stima-quadratica-end-comp} holds on every component of the above decomposition. On this regard, let us notice that it will be sufficient to check \eqref{stima-quadratica-end-comp} only on $C_\e\setminus (N_\e \cap D_{\eta_1})$ and on $D_{\etu,R_1}$.\\
We start with the analysis on $D_{\etu,\Ru}$. We recall that in this region $\bg\,=\,\vu^{{4k}/({n-2k})}\,\gc$. Hence, from the computational point of view it is more convenient to work with the cylindrical metric as a background metric. To this end we set $z\,:=\,\vu \,w$, $\hat{z}\,:=\,\vu\,\hat{w}$ and $z^\top\,:=\,\vu w^\top$. Thus, by using the conformal equivariance property \eqref{confequilin} and that $u_\e\,\equiv\,1$ on $D_{\eta_1,\Ru}$ (see \eqref{def-approx-conf}), one has
\begin{equation}
\label{conf-quad}
\mathcal{Q}(u_\e(\cdot) , \bg) \,[w;w]\,=\,\vu^{-\frac{2kn}{n-2k}}\mathcal{Q} (\vu(\cdot) , \gc) \,[z;z], \,\,\,\,\,\,\,w\,=\,\vu^{-1}\,z \, .
\end{equation}
Since $\vu$ is uniformly bounded from above and from below, one can deduce the desired estimate from its analogous for $\mathcal{Q} (\vu(\cdot) , \gc) \,[z \,;z] $. This last term can be expanded on $D_{\etu,\Ru}$ as
\begin{eqnarray}
\label{def-quadratic2}
\mathcal{Q} (\vu(\cdot) , \gc) \,[z \,;z] & = & \mathcal{Q} (\vu(\cdot) , \gc) \,[(\hat{z},\tboa^{1,+},0,0,0) \, ; (\hat{z},\tboa^{1,+},0,0,0)]\nonumber \\
& = & \int_{0}^1\Big[\mathbb{L}(\vu(\tboa^{1,+},0,0,0,\cdot) +\tau
 \hat{z}(\cdot),\gc) -\mathbb{L}(\vu(\tboa^{1,+},0,0,0,\cdot),\gc)\Big] \, [\hat{z}] \,\, d\tau\nonumber\\
& & +\, \Big[\mathbb{L}(\vu(\tboa^{1,+},0,0,0,\cdot),\gc)-\mathbb{L}(\vu(\cdot),\gc)\Big] \, [\hat{z}]\nonumber\\
& & +\,\int_{0}^1\Big[\mathbb{L}(\vu(\tau\tboa^{1,+},0,0,0,\cdot),\gc)-\mathbb{L}(\vu(\cdot),\gc) \Big] \, [\tilde{a}^{1,+}_{j}\Psi_{\eta_1}^{j,1}] \,\, d\tau \\
&=: &  Q_1 + Q_2 + Q_3 \,. \nonumber
\end{eqnarray}
%
To proceed, we recall that the linearization of $\mathcal{N}(\cdot,\gc)$ around $\vu$ has the following general structure (see \eqref{eq} and \cite[Section 5]{mn})
\be
\label{general-linear}
\mathbb{L}(\vu,\gc)\,=\,\mathbb{L}^0(\vu,\gc) + c_{n,k}\vu^{\frac{2kn}{n-2k}-1},       
\ee
where $c_{n,k}$ is a computable positive constant and $\mathbb{L}^0(\vu,\gc)$ is a second order differential operator with smooth coefficients of the following form
\be
\label{forma-parte-princ-linear}
\mathbb{L}^0(\vu,\gc) \,=\,\sum_{\vert \alpha\vert \leq 2} P^{2k-1}_\alpha( \vu , \nabla \vu , \nabla^2 \vu) \, \partial^\alpha \,, 
\ee
where $\alpha$ is a multi-index and 
$P^{2k-1}_\alpha\,:\,\mathbb{R}\times\mathbb{R}^n\times \mathbb{R}^{\frac{n(n+1)}{2}}\rightarrow \mathbb{R}$ 
is an homogeneous polynomial of degree $2k -1$
\begin{equation*}
\label{polinomio}
P^{2k-1}_\alpha(x,y,z)\,:=\,\sum_{\beta_0 +\vert\beta_1\vert +\vert\beta_2\vert=2k -1}a_{\alpha,(\beta_0,\beta_1,\beta_2)}x^{\beta_0}y^{\beta_1}z^{\beta_2}.
\end{equation*}
As a consequence, setting ${\boldsymbol{h}}:=(h_0,h_1,h_2)\in \mathbb{R}\times \mathbb{R}^n\times \mathbb{R}^{\frac{n(n+1)}{2}}$ and expanding at first order $P^{2k-1}_{\alpha}$ one has
\begin{equation}
\label{espansioneP}
P^{2k-1}_\alpha(x+h_0,y+h_1,z+h_2) - P^{2k-1}_\alpha(x,y,z)\,=\,D P^{2k-1}_\alpha(x,y,z)\cdot {\boldsymbol{h}} + O(\vert {\boldsymbol{h}}\vert^2).
\end{equation}
We have now all the ingredients to obtain the estimate on $D_{\etu,R_1}$ for \eqref{def-quadratic2}. 
First of all, we recall that (see \eqref{semiholder-weight} and \eqref{holder-weight})
\begin{eqnarray}
\label{normaQD1}
 \nor{\mathcal{Q}(\vu,\gc)[ \, z;z \,] }{C^{0,\b}_{-\d}(D_{\etu,R_1})}\,&:=&\,\sup_{[R_1,+\infty) \times \mathbb{S}^{n-1}}(\cosh r_1)^{\d} \,|\mathcal{Q}(\vu,\gc)[ \, z;z \,]|  \nonumber\\
 &+& \sup_{ r_1 \ge R_1 + 1 }  \,\, (\cosh r_1)^{\d}\, [\,\mathcal{Q}(\vu,\gc)[ \, z;z \,]\,]_{C^{0,\b}(\,(r_1-1, r_1+1) \times \mathbb{S}^{n-1}\,)}.
\end{eqnarray}
We will estimate separately the two terms in \eqref{normaQD1}. 

\medskip

We start with the estimate of the weighted $C^0$ norm of $Q_1$. 
To this end, by applying \eqref{espansioneP} to the operator $\mathbb{L}^0$ and simply expanding at first order the remaining term in \eqref{general-linear} we may decompose $Q_1$ into $Q_1\,=\,{q}_{1,1} + {q}_{1,2}$ where, for any $(\ru,\theta)\,\in\,D_{\eta_1,\Ru}$,
\begin{eqnarray}
\label{espQ11}
q_{1,1} \,:=\,\int_{0}^1\sum_{\vert\alpha\vert\le 2}\big[ D P^{2k-1}_\alpha(\vu, \nabla \vu,\nabla^2 \vu)\cdot \tau{\boldsymbol{h}} + O(\vert {\tau\boldsymbol{h}}\vert^2) \big]\partial^\alpha \hat{z} \,\, d\tau
\end{eqnarray}
and 
\begin{eqnarray}
\label{espQ12}
q_{1,2}\,:=\,\int_0^1
\big( d_{n,k}\,\vu^{\frac{2kn}{n-2k} -2}\,\tau \hat z + O(\vert \tau \hat{z}\vert^2   \big)[\hat{z}] \,\, d\tau,\,\quad d_{n,k}\,:=\,c_{n,k} \, \big(\tfrac{2kn}{n-2k}-1 \big)
\end{eqnarray}
where the vector $\boldsymbol{h}$ appearing in $\eqref{espQ11}$ has components $( \hat z , \nabla\hat{z}, \nabla^2 \hat{z} )$. 
Thus, it is immediate to obtain
\begin{eqnarray}
\label{stimaquad1}
\nor{q_{1,1}}{C^0_{-\d}(M_\e\setminus C_\e)}\,\le\,C \, \nor{\hat{z}}{C^{2}_{-\d}(M_\e\setminus C_\e)}^2\,\,\ \hbox{ and } \,\,\,\nor{q_{1,2}}{C^0_{-\d}(M_\e\setminus C_\e)}\,\le\,C \, \nor{\hat{z}}{C^{2}_{-\d}(M_\e\setminus C_\e)}^2 \,\,
\end{eqnarray}
for some positive constant $C$, possibly depending on $n,k,\gamma$ and $\delta$.

\medskip
  
Concerning $Q_2$, we preliminarly expand $\vu(\tboa^{1,+},0,0,0,\,\cdot \,)$ as (recall \eqref{psio}-\eqref{psij} and \eqref{normaW})
\be
\label{espansionue}
\vu(\tboa^{1,+},0,0,0,\,\cdot\,)\,=\,\vu(\cdot) + \Psi_{{\eta_{1}}}^{j,+}\tilde{a}_{j}^{1,+} + O(\|z^{\top}\|^2_{\mathcal{W(M_\e)}}).
\ee
Thus, splitting $Q_2$ into $Q_2\,=\, {q_{2,1}} + {q_{2,2}}$, where
\begin{eqnarray}
\label{espQ21}
q_{2,1} \,:=\, \big[ \, \mathbb{L}^0 \big( \vu(\cdot) + \Psi_{{\eta_{1}}}^{j,+}\tilde{a}_{j}^{1,+} + O(\|z^{\top}\|^2_{\mathcal{W(M_\e)}}) \, , \, \gc \, \big)  \, - \,  \mathbb{L}^0 \big( \, \vu(\cdot),\gc \, \big) \, \big] \, \, [z]
\end{eqnarray}
and 
\begin{eqnarray}
\label{espQ22}
q_{2,2}\,:=\,c_{n,k} \, \big[ \, (\vu(\cdot) + \Psi_{\eta_{1}}^{j,+}a_{j}^{1,+} + O(\|z^{\top}\|^2_{\mathcal{W(M_\e)}}))^{\frac{2kn}{n-2k}-1} - c_{n,k} \, \vu^{\frac{2kn}{n-2k}-1}(\cdot) \, \big] \,\, [z]
\end{eqnarray}
Thus, by using \eqref{espansioneP} in \eqref{espQ21} and expanding at first order in \eqref{espQ22} we get
\begin{eqnarray}
\label{stimaquad2}
&\disp \nor{q_{2,1}}{C^0_{-\d}(M_\e\setminus C_\e)}\,\le\,C \, 
\nor{z^\top}{\mathcal{W}(M_\e)}\nor{z}{C^{2,\b}_{-\d , \g - \frac{n-2k}{2k}}(M_\e) \oplus \mathcal{W}(M_\e)} \,\,\,\nonumber\\
&\disp \nor{q_{2,2}}{C^0_{-\d}(M_\e\setminus C_\e)}\,\le\,C \, 
\nor{z^\top}{\mathcal{W}(M_\e)}\nor{z}{C^{2,\b}_{-\d , \g - \frac{n-2k}{2k}}(M_\e) \oplus\mathcal{W}(M_\e)} \,\, ,
\end{eqnarray}
where the positive constant $C$ possibly depends on $n,k,\gamma$ and $\delta$.

\medskip


Now, we estimate $Q_3$. The estimate relies on the observation that $Q_3$ has compact support. To see this, let us show that 
\begin{equation*}
Q_3'\,:=\,\big[ \, \mathbb{L}(\vu(\tboa^{1,+},0,0,0,\cdot),\gc)-\mathbb{L}(\vu(\cdot),\gc) \, \big] \, [\tilde{a}^{1,+}_{j}\Psi_{\eta_1}^{j,+}]
\end{equation*}
has indeed compact support on $D_{\eta_1,\Ru}$. We may decompose
$Q_3'$ as
\begin{eqnarray*}
Q_3'\,&=&\,\big[ \, \mathbb{L}(\vu(\tboa^{1,+},0,0,0,\cdot),\gc)-\mathbb{L}(\vu(\boa^{1,+},0,0,0,\cdot),\gc) \big] \, [\tilde{a}^{1,+}_{j}\Psi_{\eta_1}^{j,+}]\\
 & & + \big[ \,  \mathbb{L}(\vu(\boa^{1,+},0,0,0,\cdot),\gc) - \mathbb{L}(\vu(\cdot),\gc) \, \big] \, [(\tilde{a}^{1,+}_{j}- a^{1,+}_{j})\Psi_{\eta_1}^{j,+}]\\ 
 && + \big[ \, \mathbb{L}(\vu(\boa^{1,+},0,0,0,\cdot),\gc) - \mathbb{L}(\vu(\cdot),\gc) \, \big] \, [ a^{1,+}_{j}\Psi_{\eta_1}^{j,+}]\\ 
 \,&=:&\,{q_{3,1}'} + {q_{3,2}'} + q_{3,3}' \,\, .
\end{eqnarray*}
Now, $q_{3,3}'\,\equiv\,0$ in $D_{\eta_1,\Ru}$. In fact, we observe that
\begin{eqnarray*}
{q_{3,3}'}\,=\,\int_{0}^1 D^2\mathcal{N}(\vu(s\boa^{1,+},0,0,0,\cdot),\gc) \, [ a^{1,+}_{j}\Psi_{\eta_1}^{j,+}, a^{1,+}_{i}\Psi_{\eta_1}^{i,+}]\,\,ds \, ,
\end{eqnarray*}
and that for any $(\ru,\theta)\in D_{\eta_1,\Ru}$  
$$
\mathcal{N}(\vu(\boa^{1,+},0,0,0,\ru,\theta),\gc)\,=\,0 \, .
$$
Now, since $\tilde{a}^{1,+}_j - a^{1,+}_j\,=\,a^{1,+}_j(\chi_{\Ru'}-1)$ has compact support, it turns out that also $q_{3,2}'$ is compactly supported. 
To see that the remaining term $q_{3,1}'$ has compact support, we first expand $\vu(\tboa^{1,+},0,0,0,\cdot)$ around $\vu(\boa^{1,+},0,0,0,\cdot)$ as 
\begin{equation*}
\vu(\tboa^{1,+},0,0,0,\cdot)\,=\,\vu(\boa^{1,+},0,0,0,\cdot) + 
\tfrac{\partial\vu(\boa^{1,+},0,0,0,\cdot)}{\partial a_j^{1,+}}(\tilde{a}_j^{1,+}- a_j^{1,+}) \, + \,  O(\vert\tboa^{1,+}-\boa^{1,+}\vert ^2).
\end{equation*} 
As before, note that $\tfrac{\partial\vu(\boa^{1,+},0,0,0,\cdot)}{\partial a_j^{1,+}}(\tilde{a}_j^{1,+}- a_j^{1,+}) \, + \,  O(\vert\tboa^{1,+}-\boa^{1,+}\vert ^2)$ has compact support. Now, recalling \eqref{forma-parte-princ-linear} and \eqref{espansioneP} and expanding at first order the potential term in \eqref{general-linear}, it is not difficult to get
\begin{equation*}
\vert {q}_{3,1}' \vert\,\le\,C \, \nor{z^\top}{\mathcal{W}(M_\e)}
 \,\, \Big|\,   \tfrac{\partial\vu(\boa^{1,+},0,0,0,\cdot)}{\partial a_j^{1,+}}(\tilde{a}_j^{1,+}- a_j^{1,+}) + O(\vert\tboa^{1,+}-\boa^{1,+}\vert ^2) \, \Big| \, , 
\end{equation*} 
which clearly implies that $q_{3,1}'$ is compactly suported.

\medskip

We can now give the desired estimate for $Q_3$. In particular, thanks to the above computations, it is evident that we can equivalently estimate the $C^0$ norm of $Q_3$ instead of its $C^0_{-\d}$ norm. To obtain this estimate we reason as before. Using \eqref{espansionue}, \eqref{forma-parte-princ-linear} and \eqref{espansioneP} and expanding at first order the potential term in \eqref{general-linear} it is standard to get
\begin{equation}
\label{stimaquad3}
\sup_{(\ru,\theta)\,\in\,(\Ru,+\infty)\times \mathbb{S}^{n-1}}\vert Q_3 \vert (\ru,\theta) \,\le\,C \, \nor{z^\top}{\mathcal{W}(M_\e)} ^2.
\end{equation}
Thus, collecting \eqref{stimaquad1}, \eqref{stimaquad2}, 
\eqref{stimaquad3} and recalling that $\vu$ is uniformly bounded from below and from above, we get (see \eqref{conf-quad}) the weighted $C^0$ estimate for $\mathcal{Q}(u_\e,\bg)[ \, w;w \,]$ on $D_{\eta_1,\Ru}$, namely the following
\be
\label{stimaC0quad}
\sup_{[R_1,+\infty) \times \mathbb{S}^{n-1}}(\cosh r_1)^{\d} \,|\mathcal{Q}(u_\e,\bg)[ \, w;w \,]| \, (r_1, \theta)\,\le \,C\rho^2.
\ee
Now, we turn our attention to the estimate for the H\"older quotients. 
We will use two different strategies. In particular, for the terms $Q_1$ and $Q_2$, we will estimate directly their H\"older quotient. For $Q_3$ we will estimate its weighted $C^1$ norm. This is possible thanks to its particular structure. More precisely, it is possible to obtain a weighted $C^1$ estimate by relying, loosely speaking, on the regularity of the Jacobi fields and of the $\tboa^{1,+}\,:=\,\chi_{\Ru'}\boa^{1,+}$. 

\medskip

We start with the estimate of the term $Q_3$. By first using \eqref{espansionue} and then expanding at first order the coefficients of the linearized operator as in \eqref{espansioneP}, 
it is sufficient to get a weighted $C^0$ estimate for 
\begin{eqnarray*}
\label{C1est1}
\nabla \,\, \big[ \, \sum_{\vert\alpha\vert\le 2}\big[ D P^{2k-1}_\alpha(\vu, \nabla \vu, \nabla^2 \vu)\cdot {\boldsymbol{h}} \,  + O(\vert {\boldsymbol{h}}\vert^2) \big] \,\, \partial^\alpha (\tilde{a}^{1,+}_{j}\Psi_{\eta_1}^{j,+}) \, \,  \big]
\end{eqnarray*}
and for 
\begin{eqnarray*}
\label{C1est2}
\nabla \,\,  \big [ \,\, \big( d_{n,k}\,\vu(\cdot)^{\frac{2kn}{n-2k} -2}\,h_0 \, + O(\vert h_0\vert^2   \big) \, [\tboa^{1,+}_{j}\Psi_{\eta_1}^{j,1}] \,\, \big] \, , \quad d_{n,k}\,:=\,c_{n,k} \big(\tfrac{2kn}{n-2k}-1\big), 
\end{eqnarray*}
where the vector ${\boldsymbol{h}}$ has components 
$ h_i \, = \,\nabla^i(  \tilde{a}_{j}^{1,+} \Psi_{\eta_1}^{j,+} + O(\|w^{\top}\|^2_{\mathcal{W(M_\e)}})$, for $i\,=\,0,1,2$. 
We will outline only the estimate for the first term. 
A similar argument applies to the second.
First of all, from the definition of $\tboa^{1,+}\,:=\,\chi_{\Ru'}\boa^{1,+}$ (recall that the cut off function $\chi_{\Ru}$ is smooth and bounded with its derivatives) and from the definition of the Jacobi fields, we easily get
\begin{eqnarray*}
\label{C1est3}
\,\,\,\,\,\hspace{4mm}
 \,\, \vert \nabla \, \partial^\alpha(\tilde{a}_{j}^{1,+} \Psi_{\eta_{1}}^{j,+} )\vert \,\le \,C\, \|z^{\top}\|_{\mathcal{W(M_\e)}},\,\,\,\,\,\vert\alpha\vert\le 2
\end{eqnarray*}
Thus, since $\sup_{\ru\ge \Ru}\vert \nabla^i \vu\vert \, \,\le \,C$, for $i=0,1,2,3$, we get
\begin{equation*}
\sup_{(\ru,\theta)\in [\Ru,+\infty)\times \mathbb{S}^{n-1}}\vert \nabla Q_3 \vert (\ru,\theta) \,\le\, C \, \|z^{\top}\|_{\mathcal{W(M_\e)}}^2,
\end{equation*}
which implies, together with \eqref{stimaquad3}, 
\begin{equation}
\label{holdQ3}
\nor{Q_3}{C^{0,\b}_{-\d}(D_{\eta_1})}\,\le \,C \|z^{\top}\|_{\mathcal{W(M_\e)}}^2 \, .
\end{equation}
Now, we estimate the H\"older quotients for $Q_1$ and $Q_2$.
We start with $Q_1$. As we did for the $C^0$ estimate, we split $Q_1$ into $Q_1 = q_{1,1} + q_{1,2}$. We will detail only the estimate for the H\"older quotient of $q_{1,1}$, the one for $q_{1,2}$ being completely analogous. Recalling that $\boldsymbol{h}$ is the vector with components $h_i = \nabla_{\gc}^i\hat{z}$, for $i\,=\,0,1,2$, we can write
\begin{eqnarray}
\label{holdQ11}
q_{1,1}(r,\theta) \, - \, q_{1,1}(r',\theta')\,&=&\,\int_{0}^1\sum_{\vert\alpha\vert\,\le 2\,}\Big[ \, D P_\alpha^{2k-1}(\vu(r), \nabla\vu(r),\nabla^2 \vu(r)) \, [\tau\boldsymbol{h}(r,\theta)] \, \, \partial^\alpha\hat{z}(r,\theta) \nonumber\\
&& \quad \quad \quad \quad \quad \quad  - \, D P_\alpha^{2k-1}(\vu(r'),\nabla \vu(r'),\nabla^2\vu(r')) \,[ \tau\boldsymbol{h}(r',\theta') ] \,\, \partial^\alpha\hat{z}(r',\theta') \, \Big] \,\, d\tau\nonumber\\
& & +  \, \int_{0}^1\sum_{\vert\alpha\vert\,\le 2\,}\Big[ \, G(\tau\boldsymbol{h}(r,\theta)) \, \partial^\alpha \hat{z}(r,\theta) - G(\tau\boldsymbol{h}(r',\theta')) \, \partial^\alpha \hat{z}(r',\theta') \, \Big] \, d\tau,
\end{eqnarray}
where $G$ is a smooth function such that $G (\boldsymbol{v})\,=\,O(|\boldsymbol{v}|^2)$ and $ DG (\boldsymbol{v})\,=\,O(|\boldsymbol{v}|)$. Using the short notation 
$$
A_\alpha (r, \theta) \, :=  \, D P_\alpha^{2k-1}(\vu(r), \nabla\vu(r),\nabla^2 \vu(r))  \, ,
$$
we split integrand of the first summand in the expression above into
\begin{eqnarray*}
\sum_{\vert\alpha\vert\,\le 2\,}\Big[ \, A_\alpha (r, \theta) \, [\tau\boldsymbol{h}(r,\theta)]  \cdot  [\, \partial^\alpha\hat{z}(r,\theta) \, - \, \partial^\alpha \hat{z} (r', \theta') \, ] \, + \, A_\alpha (r', \theta') \, [ \tau\boldsymbol{h}(r,\theta)  - \tau\boldsymbol{h} 
(r',\theta')] \cdot \partial^\alpha \hat{z} (r', \theta') 
\\ + \, \big[ A_\alpha (r,\theta) - A_\alpha (r', \theta') \big] \, [\tau\boldsymbol{h}(r,\theta)] \cdot \partial^\alpha \hat{z} (r', \theta') \, \Big] \,.
\end{eqnarray*}
Using the fact that $(\cosh r)^{-\delta} < 1$, for $\delta >0$ it is now easy to bound the weighted H\"older quotient of each term by a constant times $\nor{\hat z}{C^{2,\b}_{-\d}(D_{\eta_1,\Ru})}^2$. Applying the same reasoning to the second summand in \eqref{holdQ11} and to the term $q_{1,2}$, one concludes that
\begin{eqnarray}
\label{holdQ12}
\sup_{\ru\ge \Ru +1 }(\cosh{\ru})^{\d}\left[Q_1\right]_{C^{0,\b}(\,(\ru-1, \ru +1) \times \mathbb{S}^{n-1}\,)}
 \,\le \,C \,  \nor{\hat z}{C^{2,\b}_{-\d}(D_{\eta_1,\Ru})}^2 \, .
\end{eqnarray}
Using the same arguments, one can deduce the same type of estimate for the H\"older quotient of $Q_2$, namely
\be
\label{holdQ2}
\sup_{ \ru \ge R_1 +1 }  \,\, (\cosh \ru)^{\d}\, [\,Q_2,]_{C^{0,\b}(\,(r-1, r+1) \times \mathbb{S}^{n-1}\,)}\,\le \,C\nor{\hat z}{C^{2,\b}_{-\d}(M_\e\setminus C_\e)}\nor{z}{C^{2,\b}_{-\d}(M_\e\setminus C_\e) \oplus\mathcal{W}(M_\e)}.
\ee
Thus, combining \eqref{stimaC0quad} with \eqref{holdQ3}, \eqref{holdQ12} and \eqref{holdQ2} and recalling that $\vu$ is uniformly bounded from above and from below, we obtain
\be
\label{stimaquadD1}
\nor{\mathcal{Q}(u_\e,\bg)[ \, w;w \,] }{C^{0,\b}_{-\d}(D_{\eta_1,\Ru})}\,\le\,C\rho^2.
\ee
As anticipated, the estimates of $\mathcal{Q}(u_\e(\cdot),\bg)[ \, \cdot\,\,;\,\,\cdot \,] $ on the other ends $D_{\etu,-R_1}, D_{\etd,R_2}, D_{\etd,-R_2}$ clearly follows from a similar argument.
Thus, \eqref{stimaquadD1} actually becomes
\begin{equation}
\label{stimaquadM-C}
\nor{\mathcal{Q}(u_\e,\bg)[ \, w;w \,] }{C^{0,\b}_{-\d}(M_\e\setminus C_\e)}\,\le\,C\rho^2.
\end{equation}
Finally, it remains to estimate
$\mathcal{Q}(u_\e(\cdot),\bg)$ on $C_\e\setminus N_\e$. Since on this region $u_\e(\tboa^{1,+},\tboa^{1,-},\tboa^{2,+},\tboa^{2,-},\cdot\,)$ coincides with $u_\e(\cdot)$, it turns out that the quadratic remainder can be written as 
\begin{eqnarray*}
\label{def-quadratic-compatta}
\mathcal{Q} (u_{\e}(\cdot) , \bar g) \,[w;w] & = & \mathcal{Q} (u_{\e}(\cdot) , \bar g) \,[\hat{w};\hat{w}]\nonumber\\
&=& \int_{0}^1\big[ \, \mathbb{L}(u_{\e}(\cdot) +\tau
 \hat{w}(\cdot),\bg) -\mathbb{L}(u_{\e}(\cdot) , \bg) \,\big][\hat{w}] \,\, d\tau \, .
\end{eqnarray*}
Thus, using an argument similar to the one used above (alternatively, one may  refer to \cite{cat-maz}), we have
\begin{equation*}
\label{stimaquadcomp}
\nor{\mathcal{Q}(u_\e,\bg)[ \, w;w \,] }{C^{0,\b}(C_\e\setminus N_\e)}\,\le\,C\rho^2.
\end{equation*}
Thus, the lemma is proven.
\end{proof}
\noindent
We are now in the position to conclude the proof of Theorem \ref{main1}. We need to prove that the sequence of the solutions to the iterative scheme 
\eqref{newton-scheme} (which exist thanks to Proposition 
\ref{isomorfismo-calibrato}) is equibounded in $C^{2,\b}_{-\d , \g - \frac{n-2k}{2k}}(M_\e) \oplus \mathcal{W}(M_\e)$. We start with the estimate on $w_1$. Thanks to the uniform \emph{a priori} estimate \eqref{stima-globale} for the linearized problem and to the estimate of the proper error term in Lemma \ref{error estimate}, we immediately have
\begin{equation}
\label{stimaw1}
\nor{w_1}{C^{2,\b}_{-\d , \g - \frac{n-2k}{2k}}(M_\e) \oplus\mathcal{W}(M_\e)}\,\,\le \,\,AL\e^{(\gamma + 2)\frac{n-2k}{n}},
\end{equation} 
where the constant $L=L(\d,\gamma,n,k)$ denotes the uniform bound on the norm of $\mathbb{L}(u_\e(\cdot),\bg)^{-1}$, while the constant $A=A(\d,\gamma,n,k)$ is the constant appearing in Lemma \ref{error estimate}. 

\medskip

We proceed with the estimate of $w_2$. From the very definition of $w_2$, we have
\be
\label{stimaw2-1}
\nor{w_2}{C^{2,\b}_{-\d , \g - \frac{n-2k}{2k}}(M_\e) \oplus\mathcal{W}(M_\e)} & \leq & L \, \nor{\, \mathcal{N}(u_\e,\bg) \, + \, \mathcal{Q}(u_\e,\bg){[w_1;w_1]}   }{C^{0,\b}_{-\d,\gamma-(n-2k)} (M_\e)} \\
& \leq & AL \, \e^{(\gamma + 2)\frac{n-2k}{n}} \, + \, L \, \nor{\,  \mathcal{Q}(u_\e,\bg){[w_1;w_1]}   }{C^{0,\b}_{-\d,\gamma-(n-2k)} (M_\e)} . \nonumber
\ee
Thus, we need to estimate the quadratic remainder. Recalling the definition of the global weighted norm \eqref{def-norma-holder} in $M_\e$, we have the following
\bea
 \disp\nor{ \mathcal{Q}(u_\e,\bg)[w_1;w_1]   }{C^{0,\b}_{-\d,\gamma-(n-2k)} (M_\e)}   =&  \nor{ \mathcal{Q}(u_\e,\bg)[w_1;w_1]   }{C^{0,\b}_{-\d} (M_\e \setminus N_\e)}
  +  
\sup_{N_\e} (\e\cosh t)^{\gamma - (n-2k)} \big|\mathcal{Q}(u_\e,\bg)[w_1;w_1] \big| \\
 & + \, 
 \sup_{ t \in (\log \e, -\log \e) } Ê\,\, (\e\cosh t)^{\gamma -(n-2k)}\, [\,\mathcal{Q}(u_\e,\bg)[w_1]\,]_{C^{0,\b}(\,(t-1, t+1) \times \mathbb{S}^{n-1}\,)} \,\,.
\eea
Thanks to Lemma \ref{lem-stima-quadratica-end-comp} and to estimate \eqref{stimaw1}, the first term is estimated in this way
\begin{equation*}
\label{stima-quadratic-1}
\nor{ \mathcal{Q}(u_\e,\bg)[w_1;w_1]   }{C^{0,\b}_{-\d} (M_\e \setminus N_\e)}\,\,\le\,\,C \,  \nor{w_1}{C^{2,\b}_{-\d , \g - \frac{n-2k}{2k}}(M_\e) \oplus\mathcal{W}(M_\e)  }^2 \le \, ACL \, \e^{(\gamma + 2)\frac{n-2k}{n}}.
\end{equation*}
We consider now the second term (the term containing the H\"older quotients will be then estimated in the same way, see \cite{cat-maz}). On the neck region $N_\e$,
$\mathcal{Q}(u_\e(\cdot),\bg)[w_1;w_1] $ has this form
\begin{eqnarray*}
\label{def-quadratic-neck}
\mathcal{Q} (u_{\e}(\cdot) , \bar g) \,[w_1;w_1] & = & \mathcal{Q} (u_{\e}(\cdot) , \bar g) \,[\hat{w}_1;\hat{w}_1]\nonumber\\
& = &
\int_{0}^1\big[\mathbb{L}(u_{\e}(\cdot) +\tau
 \hat{w}(\cdot),\bg) -\mathbb{L}(u_{\e}(\cdot))\big][\hat{w}_1] \, d\tau.
\end{eqnarray*}
Thus, by exploiting the structure of $\mathbb{L}(u_\e(\cdot),\bg)$ given in \eqref{general-linear}, using a first order expansion and recalling also \eqref{stimaw1} we have that there exists a positive constant independent of $\e$ such that 
\begin{eqnarray} 
\label{stima-quadratic-neck}
(\e\cosh t)^{\gamma - (n-2k)}\, \big|\mathcal{Q}(u_\e(\cdot),\bg)[w_1;w_1] \big| & \le & C \, (\e\cosh t)^{\gamma - (n-2k)}(\e\cosh t)^{(2k-2)\frac{n-2k}{2k}}(\e\cosh t)^{-2\gamma +\frac{n-2k}{k}}\nor{\hat{w}_1}{C^{2,\b}_{\g - \frac{n-2k}{2k}}(N_\e)}^2\nonumber\\
& \le  & ACL \, \e^{-\g}\e^{(\gamma+2)\frac{n-2k}{n}}\nor{w_1}{C^{2,\b}_{-\d , \g - \frac{n-2k}{2k}}(M_\e) \oplus\mathcal{W}(M_\e)}, 
\end{eqnarray}
where we have used also the fact that, for $j=0,1,2$, $\nabla^j_{\bg}(u_\e)\,=\,O((\e\cosh t)^{\frac{n-2k}{2k}})$ on the neck region. Now, since $-\g + (\gamma+2)\frac{n-2k}{n}\,>\,0$, for any $\g\,\in (0, \frac{n-2k}{k})$, we have that, setting
$$ B\,:=\,AL^2C\e^{-\g }\e^{(\gamma +2)\frac{n-2k}{n}},$$
there exists a positive number $\e_0\,=\,\e_0(\d,\g,n,k)$ such that, for any $\e\in (0,\e_0]$, we can choose $B\,\le\,\tfrac{1}{4}$. 
Consequentely, the estimate \eqref{stimaw2-1} for $w_2$ becomes
\begin{eqnarray*}
\label{stimaw2-2}
\nor{w_2}{C^{2,\b}_{-\d , \g - \frac{n-2k}{2k}}(M_\e) \oplus \mathcal{W}(M_\e)} & \leq & L \, \nor{\, \mathcal{N}(u_\e,\bg) \, + \, \mathcal{Q}(u_\e,\bg){[w_1;w_1]}   }{C^{0,\b}_{-\d,\gamma-(n-2k)} (M_\e)} \\
& \leq & AL \, \e^{(\gamma +2)\frac{n-2k}{n}} \, + \, L \, \nor{\,  \mathcal{Q}(u_\e,\bg){[w_1;w_1]}   }{C^{0,\b}_{-\d,\gamma-(n-2k)} (M_\e)} \nonumber\\
&\le & AL \, \e^{(\gamma + 2)\frac{n-2k}{n}} \, +\,\tfrac{1}{4}\,\nor{w_1}{C^{2,\b}_{-\d , \g - \frac{n-2k}{2k}}(M_\e) \oplus\mathcal{W}(M_\e)}.\nonumber
\end{eqnarray*}
Then, we can iterate the above estimate
 obtaining, for $j\ge 1$,
\begin{eqnarray}
\label{stimawi}
\nor{w_{j+1}}{C^{2,\b}_{-\d , \g - \frac{n-2k}{2k}}(M_\e) \oplus\mathcal{W}(M_\e)} \, \leq \,  AL \, \e^{(\gamma + 2)\frac{n-2k}{n}} \,a_{j+1},
\end{eqnarray}
where the sequence $a_j$ is inductively defined as
\begin{eqnarray*}\label{induction-aj}
\begin{cases}
a_{1}\,\,\,\,\,\,:=\,1\\
a_{j+1}\,:=\,1 + \tfrac{1}{4}\,a_{j}^2,\,\,\,j\in\mathbb{N}.
\end{cases}
\end{eqnarray*} 
Now, a straightforward induction argument shows that $\sup_j a_j \,\le\,2$, thus estimate \eqref{stimawi} becomes
\begin{equation}
\label{stimawiunif}
\nor{w_j}{C^{2,\b}_{-\d , \g - \frac{n-2k}{2k}}(M_\e) \oplus\mathcal{W}(M_\e)} \,\le \,  2AL \, \e^{(\gamma + 2)\frac{n-2k}{n}}\,.
\end{equation} 
The previous estimate, combined with the fact that the embedding 
$C^{2,\b}_{-\d}(M_\e)\,\longrightarrow\,C^{2}_{-\d'}(M_\e)$ is compact for any $\d ' < \d$ (see \cite[Chapter $12$]{pacard}), implies (up to a subsequence) the convergence in $C^2_{-\d'}(M_\e)$ of $w_i$ to a fixed point 
$\,w_\e\,=\,(\hat{w}_\e,\boa^{\e,1,+},\boa^{\e,1,-},\boa^{\e,2,+},\boa^{\e,2,-})$ of the problem \eqref{nonlinear-fixed-point}. 

\medskip

Thanks to the canonical identification \eqref{ident} we will write, with a little abuse of notation, $w_\e\,=\, \hat{w}_\e + \tilde{a}^{\e,i,+}_{j} {\Psi_{\eta_i}^{j,+}} + \tilde{a}^{\e,i,-}_{j} {\Psi_{\eta_i}^{j,-}}$. Since \eqref{stimawiunif} is uniform with respect to $j$, $w_\e $ verifies
\begin{equation}
\label{stimawiunif-ep}
\nor{w_\e}{C^{2,\b}_{-\d , \g - \frac{n-2k}{2k}}(M_\e) \oplus\mathcal{W}(M_\e)} \,\le \,  2AL \, \e^{(\gamma + 2)\frac{n-2k}{n}}\,.
\end{equation}

We claim now that there exists $\e_0>0$, such that for every $\e \in [0, \e_0)$ the exact solutions 
$$
u_\e(\boa^{\e,1,+},\boa^{\e,1,-},\boa^{\e,2,+},\boa^{\e,2,-},\cdot) \, + \, \hat{w}_\e(\cdot)
$$ 
are positive. To see this fact we first observe that up to choose $\e$ sufficiently small, the function $y_\e := u_\e(\boa^{\e,1,+},\boa^{\e,1,-},\boa^{\e,2,+},\boa^{\e,2,-},\cdot)$ is positive everywhere by definition. Secondly, since $\hat{w}_\e$ decays faster than $y_\e$ along the complete ends, the exact solution $y_\e + \hat{w}_	\e$ is certainly positive outside of a compact region $K_0 \subset M_\e$. Hence, since \eqref{stimawiunif-ep} implies that
\begin{equation}
\label{wpicc}
\nor{y_{\e}^{-1}\hat{w}_\e}{C^{2,\b}(K_0)}
\,\le\,C\e^{-\g + (\g +2)\frac{n-2k}{n}},
\end{equation}
there holds that $u_\e(\boa^{\e,1,+},\boa^{\e,1,-},\boa^{\e,2,+},\boa^{\e,2,-},\cdot) + \hat{w}_\e\,=\,y_\e + \hat{w}_\e\,=\,y_\e(1 + y_{\e}^{-1}\hat{w}_\e )\,>\, 0$.
 
\medskip

For $\e\,\in\,(0,\e_0]$ we set
\begin{equation*}
\label{sol-metrica}
\widetilde{g_\e}\,:=\,(u_\e(\boa^{\e,1,+},\boa^{\e,1,-},\boa^{\e,2,+},\boa^{\e,2,-},\cdot) + \hat{w}_\e)^{\frac{4k}{n-2k}} \,  \bg. 
\end{equation*}
The above considerations imply that $\widetilde{g_\e}$ is the metric sought. We recall that the completeness of these metrics is a consequence of the decaying of $\hat{w}_\e$ on the ends of $M_\e$ and of the fact that $u_\e(\boa^{\e,1,+},\boa^{\e,1,-},\boa^{\e,2,+},\boa^{\e,2,-},\cdot)$ is, by construction, a complete solution to the $\sigma_k$-equation, locally on the end.

\medskip

Moreover, the family of metrics $\widetilde{g_\e}$ converges to the initial metric $g_i$ with respect to the $C^2$ topology on every compact subset of $D_{\eta_i}\setminus\left\{p_i\right\}$, 
for $i\,=\,1,2 $. This is evident on the four ends $D_{\eta_i,\pm R_i}$. In fact on these regions we have 
$\bg\,=\,g_i$ and  
\eqref{stimawiunif-ep} implies that on every compact subset of  $D_{\eta_i,\pm R_i}$ 
$u_\e(\boa^{\e,1,+},\boa^{\e,1,-},\boa^{\e,2,+},\boa^{\e,2,-},\cdot) + \hat{w}_\e\longrightarrow u_\e(\cdot)\,=\,1$ in $C^2$ as $\e\rightarrow 0$. 
To see that $\widetilde{g_\e}\longrightarrow g_i$ on $D_{\eta_i}\setminus \{p_i\}\,\cap\,C_\e$, we recall, as before, that $u_\e(\boa^{\e,1,+},\boa^{\e,1,-},\boa^{\e,2,+},\boa^{\e,2,-},\cdot)\,\equiv\,u_\e$ on $C_\e$. Thus, the metric $\widetilde{g_\e}$ could be written as 
\begin{equation*}
\widetilde{g}_\e  \,\,  = \,\, (1+u_\e^{-1}w_{\e})^{\frac{4k}{n-2k}} \, g_\e.
\end{equation*}
Now, since by construction on every compact subset of $D_{\eta_i}\setminus \{p_i\}\,\cap\,C_\e$ the metric $g_\e$ converges to the initial metric $g_i$ in $C^2$ as $\e\rightarrow 0$, \eqref{wpicc}
implies that also the exact solutions $\widetilde{g}_\e$ tend to the initial metric $g_i$ with respect to the $C^2$--topology on the compact subsets of $D_{\eta_i}\setminus \{p_i\}\,\cap\,C_\e$, for $i=1,2$, as $\e\rightarrow 0$. This concludes the proof of Theorem \ref{main1}.

\

\

\end{document}